\theoremstyle{definition}
\newtheorem{definition}{Definition}[section]
\newtheorem{theorem}{Theorem}[section]
\newtheorem{lemma}[theorem]{Lemma}
\newtheorem{proposition}{Proposition}[section]
\title{Data-driven computation for periodic stochastic differential equations}
\author{Yao Li}
\address{Yao Li: Department of Mathematics and Statistics, University
  of Massachusetts Amherst, Amherst, Massachusetts, 01002, USA}
\email{yaoli@math.umass.edu}
\author{Jiatong Sun}
\address{Jiatong Sun: Department of Mathematical and Statistical Sciences, University
  of Alberta, Edmonton, Alberta, Canada, T6G 2N8}
\email{jiaton11@ualberta.ca}
\thanks{Yao Li is partially supported by NSF DMS-2510209.}
\keywords{Periodic Fokker-Planck equation, Monte Carlo simulation, Artificial neural network}
\begin{document}
\begin{abstract}
    Many stochastic differential equations in various applications like coupled neuronal oscillators are driven by time-periodic forces. In this paper, we extend several data-driven computational tools from autonomous Fokker-Planck equation to the time-periodic setting. This allows us to efficiently compute the time-periodic invariant probability measure using either grid-base method or artificial neural network solver, and estimate the speed of convergence towards the time-periodic invariant probability measure. We analyze the convergence of our algorithms and test their performances with several numerical examples. 
\end{abstract}

\maketitle

\section{Introduction}
The time evolution of a stochastic differential equation is governed by the Fokker-Planck equation. When the stochastic differential equation is autonomous (independent of time), under some assumptions, it usually admits an invariant probability measure whose probability density function solves the corresponding steady-state Fokker-Planck equation \cite{huang2015integral, huang2015integral, bogachev2022fokker}. Under additional assumptions, the stochastic differential equation generally has stochastic stability, meaning that the law of the stochastic differential equation, which is also the solution to the Fokker-Planck equation, converges to the invariant probability density function \cite{mattingly2002ergodicity, hairer2010convergence}. Many stochastic differential equations in applications, such as coupled neuronal oscillators \cite{ermentrout2010mathematical}, are driven by time-dependent external forces. In particular, periodic stimulation is frequently used in many numerical simulation of neuronal networks \cite{kromer2020long, wang2022modulation} and neuroscience experiments \cite{melland2023attractor, sceniak2001visual}. Partially motivated by this, one scenario that has recently gained attention is the periodic Fokker-Planck equation, in which the coefficients are $T$-periodic for some constant $T > 0$. It is known that under some suitable conditions, the solution of the periodic Fokker-Planck equation converges to a periodic probability measure \cite{ji2019existence, ji2021convergence}. This motivates us to study the stochastic stability of periodic stochastic differential equations. 

Similar to the case of the autonomous Fokker-Planck equation, rigorous results have their own strengths and limitations. For example, one can prove the existence of a periodic invariant probability measure and derive some concentration results. However, analytical study alone often does not provide a clear description of the periodic invariant probability measure. Similarly, rigorous studies may prove the convergence to the periodic invariant probability measure, but the speed of convergence is often not quantitative. Therefore, it is necessary to study computational methods for periodic stochastic differential equations and their associated stochastic Fokker-Planck equations. 

The goal of this paper is to extend a series of data-driven computational methods for autonomous Fokker-Planck equations to periodic Fokker-Planck equations. We introduce both a grid-based method and a neural network solver that compute the periodic invariant probability measure. We also study how to use coupling techniques to estimate the rate of convergence toward the periodic invariant probability measure. The central theme of this paper is to address periodicity in algorithms, which is key to improving their performance. 

The first approach we consider is a grid-based data-driven finite difference solver for computing the periodic invariant probability measure. This method formulates the stationary periodic Fokker-Planck equation as a constrained optimization problem, combining Monte Carlo approximations of the periodic invariant measure with the discretization of the periodic Fokker–Planck equation in both spatial and temporal variables (using the finite difference method). The periodicity in time is enforced through a cyclic boundary condition that connects the last and first temporal layers, ensuring consistency across one full time period. The discretization in spatial variables follows the scheme introduced in \cite{zhai2022deep}. The key feature of this method is that it pushes most error terms to the boundary of the numerical domain in both time direction and spatial directions, which is proved by a combination of rigorous analysis and numerical computations. In particular, under the periodic boundary condition on time, the error term concentrates at the spatial boundary regardless of the time variable.

The neural network solver approximates the periodic invariant probability measure by minimizing a loss function. The idea is to convert the constraint optimization problem into an unconstrained optimization problem and to propose the associated loss function for artificial neural network training. Different from the grid-based finite difference method, we enforce the periodic boundary condition by adding an additional loss term to the standard artificial neural network. All collocation points used for training are sampled from the same Monte Carlo simulation as in the grid-based method. This mesh-free method performs well for high-dimensional problems, where traditional grid-based methods become computationally difficult.

To numerically compute the convergence rate toward the periodic invariant probability measure, we adopt the coupling mechanisms developed in \cite{li2020numerical}. Specifically, we numerically estimate the geometric ergodicity by analyzing the distribution of coupling times obtained from pairs of stochastic processes starting from different initial positions but driven by the same stochastic differential equations. These coupled processes are constructed using a mixture of classical coupling methods, including independent coupling, synchronous coupling, reflection coupling, and maximal coupling. Each coupling method has advantages in different cases, and their combination provides more efficient and reliable numerical coupling. The key innovation in our approach is the introduction of a Floquet-type decomposition for coupling time distribution. Unlike the autonomous case, where the coupling time distribution typically exhibits exponential tails with a constant pre-factor, we observe that in time-periodic stochastic differential equations, the pre-factor becomes a $T$-periodic function. This insight allows a more accurate characterization of the convergence behavior of stochastic processes over time. We develop an algorithm to estimate the coupling time distribution and extract the periodic pre-factor via line fitting. Numerical results show strong consistency with the theoretical analysis, which confirms the validity of the approximation method and the presence of time-periodic effects in the convergence rate.

Then we apply our algorithms to several numerical examples in different dimensions. These examples evaluate the performance of our algorithms and verify our conclusion that the coupling probability should have a periodic pre-factor function. In particular, in Section 6.3, we demonstrate that our artificial neural network solver works well for a periodically driven coupled neuronal oscillators. In many applications, coupled neuronal oscillators are naturally driven by external stimulations that are often periodic. Due to the challenge of high dimensionality, it is essential to understand those externally driven neuronal networks numerically. One potential important extension of our work is the computation of eigenfunctions of periodic Fokker-Planck equations, because the first eigenfunction pair (called the Q-function) describes the leading term of the transition probability kernel \cite{perez2023universal}. We expect the regression method in \cite{kreider2025artificial} to hold after some modification. This will be addressed in our subsequent work. 

The organization of this paper is as follows. Section 2 provides the necessary probability and dynamical system backgrounds. In Section 3, we describe the numerical algorithms for grid-based data-driven methods and rigorously analyze the convergence of the method. We also show that the error will concentrate on the boundary of both the spatial and temporal domains, and we emphasize the importance of the periodic condition on the time variable. In Section 4, the algorithm for data-driven neural network solver and sampling of collocation points and reference points are described. In Section 5, we review the theoretical results and coupling mechanisms in \cite{li2020numerical}. By constructing a Floquet-type decomposition of the coupling time distribution, we study the periodic behavior of the pre-factor in the coupling time distribution, and describe the algorithm for estimating the coupling time, convergence rate, and the periodic pre-factor. Section 6 contains numerical examples of stochastic differential equations in different dimensions. We conclude this paper in Section 7 with future discussions and potential works.

\section{Probability preliminaries}
\subsection{Fokker-Planck Equation and Periodic Probability Measure.}

We consider the stochastic differential equation (SDE):
\begin{equation} \label{Eqn:SDE}
     \mathrm{d} \boldsymbol{X}_t=f\left(\boldsymbol{X}_t, t\right) \mathrm{d} t+\sigma\left(\boldsymbol{X}_t, t\right) \mathrm{d} \boldsymbol{W}_t,
\end{equation}
where the vector field $f: \mathbb{R}^n \times \mathbb{R} \rightarrow \mathbb{R}^n$ is $T$-periodic in its second variable, the coefficient matrix $\sigma: \mathbb{R}^n \times \mathbb{R} \rightarrow \mathbb{R}^{n \times n}$ is $T$-periodic in its second variable, and $\boldsymbol{W}_t$ is an $n$-dimensional standard Wiener process. 

Under sufficient regularity conditions on $f$ and $\sigma$, the stochastic differential equation (\ref{Eqn:SDE}) admits a unique solution $\boldsymbol{X}_t$, which defines a $T$-periodic Markov process. Let $P_{s,t}(\boldsymbol{x}, \cdot)$ denote the transition probability measure from time $s$ to $t$, given the initial condition $\boldsymbol{X}_s = \boldsymbol{x}$. We assume that $\boldsymbol{X}_t$ admits a unique periodic invariant probability measure $\mu$, with associated time-marginals $\{\mu_t\}_{t \in \mathbb{R}}$ satisfying
\begin{align}
    &\mu_t P_{s,t}(A) = \int_{\mathbb{R}^n} P_{s,t}(\boldsymbol{x}, A) \, \mathrm{d} \mu_t(\boldsymbol{x}), \quad \forall A \in \mathcal{B}(\mathbb{R}^n),\\
    &\mu_{t+T} = \mu_t, \quad P_{s+T, t+T}(\boldsymbol{x}, \cdot) = P_{s,t}(\boldsymbol{x}, \cdot).
\end{align}

The time evolution of the probability density of the solution process $\boldsymbol{X}_t$ is characterized by the Fokker-Planck equation, which is also known as the Kolmogorov forward equation
\begin{equation} \label{Eqn:FPE}
    0=\mathcal{L} u := - u_t -\sum_{i=1}^n\left(f_i u\right)_{x_i}+\frac{1}{2} \sum_{i, j=1}^n\left(\Sigma_{i, j} u\right)_{x_i x_j},
\end{equation}
where $u(\boldsymbol{x}, t)$ denotes the periodic probability density function of the stochastic process $\boldsymbol{X}_t$ at time $t, \Sigma=\sigma \sigma^T$ is the diffusion coefficient and is $T$-periodic in its second variable, and subscripts $t$ and $x_i$ denote partial derivatives. 

\begin{definition}[\textbf{Periodic Probability Measure}]
A Borel measure $\mu$ on $\mathbb{R}^n \times \mathbb{R}$ is called a periodic probability solution to (\ref{Eqn:FPE}) if there is a family of Borel probability measures $\left\{\mu_t\right\}_{t \in \mathbb{R}}$ on $\mathbb{R}^n$ satisfying
$$
\begin{gathered}
\mu_t=\mu_{t+T}, \quad \forall t \in \mathbb{R}, \\
\Sigma_{i, j}, f_i \in L_{l o c}^1\left(\mathbb{R}^n \times \mathbb{R}, \mathrm{d} \mu_t \mathrm{d} t\right), \quad \forall i, j \in\{1, \ldots, n\},
\end{gathered}
$$
and
$$
\int_{\mathbb{R}} \int_{\mathbb{R}^n} \mathcal{L} \phi (\boldsymbol{x}, t) \, \mathrm{d} \mu_t (\boldsymbol{x})\mathrm{d} t=0, \quad \forall \phi \in C_0^{2,1}(\mathbb{R}^n \times \mathbb{R}),
$$
such that $\mathrm{d} \mu=\mathrm{d} \mu_t \mathrm{d} t$. 
\end{definition}

Throughout this paper, we assume that the Fokker-Planck equation \eqref{Eqn:FPE} admits a unique periodic probability measure, which is the periodic invariant probability measure of the stochastic differential equation \eqref{Eqn:SDE}. In addition, we assume that for any initial point \( \boldsymbol{x} \in \mathbb{R}^n \), the transition probability measure \( P_{s,t}(\boldsymbol{x}, \cdot) \) converges to the periodic measure $\mu_t$ as \( t \to \infty \). We refer to \cite{ji2019existence} and \cite{ji2021convergence} for the detailed conditions ensuring the existence and uniqueness of the invariant periodic probability measure (called the periodic probability solution in those papers), as well as the convergence to the invariant periodic probability measure.

In this paper, we focus on cases where the periodic probability measure of (\ref{Eqn:FPE}) admits a smooth density function $\mathbb{R}^n \times \mathbb{R}\ni (\boldsymbol{x},t) \mapsto u(\boldsymbol{x}, t) \in \mathbb{R}$ satisfying the following \textbf{periodic Fokker-Planck equation}
\begin{equation} \label{Eqn:density function equation}
    \left\{\begin{array}{l}
        \mathcal{L} u=0 \\
        u(\boldsymbol{x}, t+T) = u(\boldsymbol{x}, t), \quad \forall \boldsymbol{x} \in \mathbb{R}^n , \, \forall t \in \mathbb{R},\\
        \int_{\mathbb{R}^n} u(\boldsymbol{x}, t) \mathrm{d} \boldsymbol{x}=1, \quad \forall t \in \mathbb{R}.
    \end{array}\right.
\end{equation}



\subsection{Geometric Ergodicity and Coupling} 

\begin{definition}[\textbf{Periodic Ergodicity}]
Let $E \subset \mathbb{R}^n$ be the state space. The process $\boldsymbol{X}_t$ is said to be \textit{geometrically ergodic} with rate $r>0$ if for $\mu_t$-a.e. $\boldsymbol{x} \in E$,
\[
\limsup _{t \rightarrow \infty} \frac{1}{t} \log \left(\left\|P_{s,t}(\boldsymbol{x}, \cdot)-\mu_t\right\|_{T V}\right)=-r,
\]
where $\|\mu-\nu\|_{T V}:=2 \sup _{A \in \mathcal{B}}|\mu(A)-\nu(A)|$ is the total variation distance between probability measures on $(E, \mathcal{B})$, $\mathcal{B}$ is $\sigma-$algebra.
\end{definition}

\begin{definition}[\textbf{Periodic Contraction}]
The process $\boldsymbol{X}_t$ is said to be \textit{geometrically contracting} with rate $r>0$ if for $\mu_t \times \mu_t$-almost every initial pair $(\boldsymbol{x}, \boldsymbol{y}) \in E \times E$, it holds that
\[
\limsup _{t \rightarrow \infty} \frac{1}{t} \log \left(\left\|P_{s,t}(\boldsymbol{x}, \cdot)-P_{s,t}(\boldsymbol{y}, \cdot)\right\|_{T V}\right)=-r .
\]
\end{definition}

It is easy to see that geometric ergodicity implies geometric contraction. In the case of geometric ergodicity, one usually has the estimate
\[
\left\|P_{s, t}(\mathbf{x}, \cdot)-\mu_t\right\|_{\mathrm{TV}} \leq C(\mathbf{x}, s) e^{-r(t-s)}, \, 0<s<t.
\]

Next, we recall the definitions of coupling of probability measures and coupling of stochastic processes. 
\begin{definition}[\textbf{Coupling of Probability Measure}]
\label{def: coupling of measure}
    Let $\mu$ and $\nu$ be two probability measures on $(E, \mathcal{B})$. A \textit{coupling} of $\mu$ and $\nu$ is a probability measure $\gamma$ on $E \times E$ whose first and second marginals are $\mu$ and $\nu$ respectively, that is,
    \[
    \gamma(A \times E)=\mu(A) \quad \text { and } \quad \gamma(E \times A)=\nu(A), \quad \forall A \in \mathcal{B} .
    \]
\end{definition}

Let $X$ and $Y$ be random variables with distributions $\mu$ and $\nu$ respectively, then
\begin{equation} \label{eqn: inequality of measure}
    \|\mu-\nu\|_{T V} \leq 2 \mathbb{P}[X \neq Y].
\end{equation}
This is a well-known inequality that shows the total variation distance between probability measures $\mu$ and $\nu$ is bounded by the difference of random variables realizing them. Coupling is a useful tool to bound the distance between probability measures.

\begin{definition}[\textbf{Coupling of Stochastic Process}]
    Let $\boldsymbol{X}=\left\{X_t ; t \in \mathcal{T}\right\}$ and $\boldsymbol{Y}=\left\{Y_t ; t \in \mathcal{T}\right\}$ be two stochastic processes on $(E, \mathcal{B})$. A \textit{coupling} of $\boldsymbol{X}$ and $\boldsymbol{Y}$ is a stochastic process $(\boldsymbol{X}, \boldsymbol{Y})=\left\{\left(\mathcal{X}_t, \mathcal{Y}_t\right) ; t \in \mathcal{T}\right\}$ on $E \times E$ such that
    \begin{itemize}
        \item[(i)] The first and second marginal processes $\left\{\mathcal{X}_t\right\}$ and $\left\{\mathcal{Y}_t\right\}$ are respective copies of $\boldsymbol{X}$ and $\boldsymbol{Y}$;
        \item[(ii)] If $s \in \mathcal{T}$ is such that $\mathcal{X}_s=\mathcal{Y}_s$, then $\mathcal{X}_t=\mathcal{Y}_t$ for all $t \geq s$.
    \end{itemize}
\end{definition}

Define the stopping time $\tau_c:=\inf _{t \geq 0}\left\{\mathcal{X}_t=\mathcal{Y}_t\right\}$ as the first meeting time of $\mathcal{X}_t$ and $\mathcal{Y}_t$. $\tau_c$ is called the \textit{coupling time}. A coupling $(\boldsymbol{X}, \boldsymbol{Y})$ is said to be \textit{successful} if the coupling time $\tau_c$ is almost surely finite, that is, $\mathbb{P}\left[\tau_c<\infty\right]=1$.

\section{Data-driven finite difference Fokker-Planck solver}
\subsection{Algorithm description}
Let $u(\boldsymbol{x}, t)$ be the periodic probability density function of $\mu$ at time $t$. Then $u(\boldsymbol{x}, t)$ should satisfy the stationary Fokker-Planck equation (\ref{Eqn:FPE}). In addition, $u(\boldsymbol{x}, t)$ has a probabilistic representation because for any set $A \subset \mathbb{R}^n\times \mathbb{R}$ we have
\begin{equation}
\label{Eq:MC}
    \frac{1}{T}\int_Au(\boldsymbol{x}, t) \mathrm{d}\boldsymbol{x}\mathrm{d}t = \lim_{t\rightarrow \infty}\frac{1}{t} \int_0^t \mathbf{1}_{\{X_t \in A\}} \mathrm{d}t
\end{equation}
because we assume the existence, uniqueness, and convergence of the periodic invariant probability measure. 



For simplicity, we consider the $\mathbb{R}^2 \times \mathbb{R}$ case, but the algorithm works in $\mathbb{R}^n \times \mathbb{R}$ for any dimension $n$. Now, assume that we would like to solve $u = u(\boldsymbol{x},t)$ numerically on a 2D $\times$ 1D domain $\Omega=\left[a_0, b_0\right] \times\left[a_1, b_1\right]\times\left[t_1, t_2\right]$. Specifically, here we assume that the time domain $\left[t_1, t_2\right]$ spans exactly one period of the system, that is, $t_2 = t_1 + T$, where $T$ is the known temporal period. To do this, an $N \times M \times L$ grid is constructed on $\Omega$ with grid size $h=\left(b_0-a_0\right) / N=\left(b_1-a_1\right) / M$, $\delta=\left(t_2-t_1\right) / L$. Since $u$ is the density function, we approximate it at the center of each of the $N \times M \times L$ boxes $\left\{O_{i, j,k}\right\}_{i=1, j=1,k=1}^{i=N, j=M,k=L}$ with $O_{i, j, k}=\left[a_0+(i-1) h, a_0+i h\right] \times\left[a_1+(j-1) h, a_1+j h\right] \times \left[t_1+(k-1) \delta, t_1+k \delta\right]$. Let $\mathbf{u}=\left\{u_{i, j, k}\right\}_{i=1, j=1, k=1}^{i=N, j=M, k=L}$ be this numerical solution on $D$ that we are interested in. $\mathbf{u}$ can be considered as a vector in $\mathbb{R}^{NML}$. An entry of $\mathbf{u}$, denoted by $u_{i,j,k}$, approximates the probability density function $u$ at the center of the $(i,j,k)$-box with coordinate $\left(i h+a_0-h / 2, j h+a_1-h / 2, k \delta + t_1 -\delta/2\right)$. 

Now, we consider $u$ as the solution to the Fokker-Planck equation (\ref{Eqn:FPE}) on $\Omega$, without knowing the boundary condition. Therefore, we can only discretize the operator $\mathcal{L}$ in the interior of $\Omega$ with respect to all $(N-2)(M-2)(L-1)$ interior boxes. (Note that the grid points at time $t_2$ are not used due to the periodic boundary condition. When a grid point at time $t_2$ is needed in the time discretization, we use the grid point at time $t_1$ instead.) Using the finite difference method, the discretization of the Fokker-Planck equation with respect to each center point gives a linear relation among $\left\{ u_{i,j,k} \right\}$. This produces a linear constraint for $\mathbf{u}$, denoted by
$$
\mathbf{A u}=\mathbf{0}
$$
where $\mathbf{A}$ is an $(N - 2)(M - 2)(L-1) \times NM(L-1)$ matrix. $\mathbf{A}$ is said to be the discretized Fokker-Planck operator.


Then, following the idea that the periodic invariant probability measure admits a probabilistic representation given in equation \eqref{Eq:MC}, we can get the Monte Carlo approximation of $\mathbf{u}$ in the following way. Let $\left\{\mathbf{X}_n\right\}_{n=1}^{\mathbf{N}}$ be a long numerical trajectory of the time-$\delta$ sample chain of $X_t$, i.e., $\mathbf{X}_n=X_{n \delta}$, where $\delta>0$ is the time step size of the Monte Carlo simulation. Let $\mathbf{v}=\left\{v_{i, j, k}\right\}_{i=1, j=1, k=1}^{i=N, j=M, k=L-1}$ such that
$$
v_{i, j, k}=\frac{1}{\mathbf{N} h^2 \delta} \sum_{n=1}^{\mathbf{N}} \mathbf{1}_{O_{i, j, k}}\left(X_n\right) .
$$
It follows from the ergodicity of (\ref{Eqn:SDE}) that $\mathbf{v}$ is an approximate periodic stationary solution of (\ref{Eqn:FPE}). Again, we denote the $N \times M \times (L-1)$ vector reshaped from approximate solution $\mathbf{v}$ by $\mathbf{v}$ as well.

Next, we look for the solution of the following optimization problem
\begin{equation} 
\label{Eqn:optimization}
\begin{array}{r}
\min \|\mathbf{u}-\mathbf{v}\|_2 \\
\text { subject to } \mathbf{A} \mathbf{u}=\mathbf{0}
\end{array}
\end{equation}
This is called the least norm problem. Vector
\begin{equation}
\mathbf{u}=\mathbf{A}^T\left(\mathbf{A} \mathbf{A}^T\right)^{-1}(-\mathbf{A} \mathbf{v})+\mathbf{v}
\end{equation}
solves the optimization problem.


\subsection{Error analysis through projections}
Inspired by Section 2.2 of \cite{dobson2019efficient}, we aim to show that the solution $\mathbf{u}$ to the optimization problem (\ref{Eqn:optimization}) is a good approximation of the global analytical solution $u$ on $\mathbb{R}^2 \times \mathbb{R}$ . Let $\mathbf{u}^{\text {ext }}=\left\{u_{i, j, k}^{\text {ext }}\right\}=\left \{u\left(i h+a_0-h / 2, j h+a_1-h / 2, k \delta + t_1 -\delta/2\right)\right \}$ be the values of the exact solution $u$ at the centers of the boxes.

We consider the following assumption \textbf{(H)}:
\begin{itemize}
    \item[\textbf{(a)}] For $i=1, \ldots, N, j=1, \ldots, M, k=1, \ldots, L-1$, $ v_{i, j, k}-u_{i, j, k}^{\text {ext }}$ are i.i.d. random variables with mean 0 and variance $\zeta^2$. 
    \item[\textbf{(b)}] The finite difference scheme for Equation (\ref{Eqn:FPE}) is convergent for the boundary value problem on $\left[a_0, b_0\right] \times\left[a_1, b_1\right]\times\left[t_1, t_2\right]$ with $L^{\infty}$ error $O\left(h^p+\delta^q\right)$ for some constants $p,q >0$, as $h,\delta \rightarrow 0$.
\end{itemize}
We use $h \delta^{\frac1 2} \mathbb{E}\left[\left\|\mathbf{u}-\mathbf{u}^{\text {ext }}\right\|_2\right]$, the $L^2$ numerical integration of the error term, to measure the performance of the algorithm. Before solving the optimization problem (\ref{Eqn:optimization}), we have
\[h \delta^{\frac1 2} \mathbb{E}\left[\left\|\mathbf{v}-\mathbf{u}^{\text {ext }}\right\|_2\right] = O(\zeta \,\,h N \,(\delta (L-1))^{\frac1 2})=O(\zeta).\]
\begin{theorem} \label{thm: error analysis}
Assume \textbf{(H)} holds. We have the following bound for the $L^2$ error of the optimization problem
\[ h \delta^{\frac1 2}\mathbb{E}\left[\left\|\mathbf{u}-\mathbf{u}^{\text {ext }}\right\|_2\right] \leq O\left(h^{\frac1 2} \zeta\right) + O\left(h^p + \delta^q \right). \]
\end{theorem}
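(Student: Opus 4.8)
The plan is to identify the least-norm update in \eqref{Eqn:optimization} with an orthogonal projection onto $\ker\mathbf{A}$ and then control two contributions separately: the image under this projection of the Monte Carlo noise, and the distance from $\mathbf{u}^{\mathrm{ext}}$ to $\ker\mathbf{A}$. Since $\mathbf{A}$ has full row rank (which is exactly what makes $\mathbf{A}\mathbf{A}^T$ invertible in the formula for $\mathbf{u}$), the matrix $P:=I-\mathbf{A}^T(\mathbf{A}\mathbf{A}^T)^{-1}\mathbf{A}$ is the orthogonal projection of $\mathbb{R}^{NM(L-1)}$ onto $\ker\mathbf{A}$, and the solution of \eqref{Eqn:optimization} is $\mathbf{u}=P\mathbf{v}$. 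Decomposing $\mathbf{u}^{\mathrm{ext}}=P\mathbf{u}^{\mathrm{ext}}+(I-P)\mathbf{u}^{\mathrm{ext}}$ gives
\[
\mathbf{u}-\mathbf{u}^{\mathrm{ext}}=P(\mathbf{v}-\mathbf{u}^{\mathrm{ext}})-(I-P)\mathbf{u}^{\mathrm{ext}},
\]
so by the triangle inequality it suffices to bound $h\delta^{1/2}\,\mathbb{E}\|P(\mathbf{v}-\mathbf{u}^{\mathrm{ext}})\|_2$ and $h\delta^{1/2}\,\|(I-P)\mathbf{u}^{\mathrm{ext}}\|_2$.

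For the noise term, put $\boldsymbol{\xi}=\mathbf{v}-\mathbf{u}^{\mathrm{ext}}$; by \textbf{(a)} it has i.i.d.\ mean-zero coordinates with variance $\zeta^2$, so $\mathbb{E}[\boldsymbol{\xi}\boldsymbol{\xi}^T]=\zeta^2 I$. Since $P$ is an orthogonal projection, $\mathbb{E}\|P\boldsymbol{\xi}\|_2^2=\mathbb{E}[\mathrm{tr}(P\boldsymbol{\xi}\boldsymbol{\xi}^T)]=\zeta^2\,\mathrm{tr}(P)=\zeta^2\dim\ker\mathbf{A}$, and full row rank gives $\dim\ker\mathbf{A}=NM(L-1)-(N-2)(M-2)(L-1)=(2N+2M-4)(L-1)$. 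Jensen's inequality then yields $\mathbb{E}\|P\boldsymbol{\xi}\|_2\le\zeta\sqrt{(2N+2M-4)(L-1)}$. Using $N=(b_0-a_0)/h$, $M=(b_1-a_1)/h$ and $L-1<L=T/\delta$, the quantity $(2N+2M-4)(L-1)$ is $O(1/(h\delta))$, hence $h\delta^{1/2}\sqrt{(2N+2M-4)(L-1)}=O(h^{1/2})$ and $h\delta^{1/2}\,\mathbb{E}\|P(\mathbf{v}-\mathbf{u}^{\mathrm{ext}})\|_2=O(h^{1/2}\zeta)$. The gain over the pre-optimization bound $O(\zeta)$ comes precisely from the fact that $\ker\mathbf{A}$ has dimension proportional to $N+M$ rather than $NM$: the projection keeps only the ``boundary-like'' degrees of freedom of the Monte Carlo noise.

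For the discretization term, let $\tilde{\mathbf{u}}$ denote the finite difference solution of the boundary value problem for \eqref{Eqn:FPE} on $\Omega$ with the time-periodic closure and with spatial boundary values taken from the exact solution $u$ on $\partial\Omega$. Then $\tilde{\mathbf{u}}$ satisfies all the interior discrete equations, i.e.\ $\mathbf{A}\tilde{\mathbf{u}}=\mathbf{0}$, so $\tilde{\mathbf{u}}\in\ker\mathbf{A}$ and $(I-P)\tilde{\mathbf{u}}=\mathbf{0}$; consequently $(I-P)\mathbf{u}^{\mathrm{ext}}=(I-P)(\mathbf{u}^{\mathrm{ext}}-\tilde{\mathbf{u}})$ and
\[
\|(I-P)\mathbf{u}^{\mathrm{ext}}\|_2\le\|\mathbf{u}^{\mathrm{ext}}-\tilde{\mathbf{u}}\|_2\le\sqrt{NM(L-1)}\,\|\mathbf{u}^{\mathrm{ext}}-\tilde{\mathbf{u}}\|_\infty=\sqrt{NM(L-1)}\cdot O(h^p+\delta^q)
\]
by \textbf{(b)}. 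Since $NM(L-1)$ is $O(1/(h^2\delta))$, this gives $h\delta^{1/2}\|(I-P)\mathbf{u}^{\mathrm{ext}}\|_2=O(h^p+\delta^q)$, and adding the two estimates proves the theorem.

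The individual steps are short, so I expect the only genuinely delicate points to be (i) certifying that $\mathbf{A}$ has full row rank, so that $\ker\mathbf{A}$ has exactly the small dimension $(2N+2M-4)(L-1)$ rather than something larger (a deficient rank would degrade the noise bound), and (ii) pairing hypothesis \textbf{(b)} with the correct boundary data so that it supplies an element of $\ker\mathbf{A}$ that is $O(h^p+\delta^q)$-close to $\mathbf{u}^{\mathrm{ext}}$ in $\|\cdot\|_\infty$; the rest is just counting grid points and translating $h,\delta$ into $N,M,L$.
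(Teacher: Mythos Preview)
Your proposal is correct and follows essentially the same approach as the paper: both identify $\mathbf{u}=P\mathbf{v}$ with $P$ the orthogonal projection onto $\ker\mathbf{A}$, construct an auxiliary finite difference solution lying in $\ker\mathbf{A}$ (the paper's $\mathbf{u}^{\mathrm{lin}}$ on the slightly extended domain $\widetilde{D}$, your $\tilde{\mathbf{u}}$ with exact boundary data on $\Omega$) to control the discretization term via \textbf{(H)}--(b), and exploit $\dim\ker\mathbf{A}=(2N+2M-4)(L-1)$ for the noise term. Your decomposition $\mathbf{u}-\mathbf{u}^{\mathrm{ext}}=P(\mathbf{v}-\mathbf{u}^{\mathrm{ext}})-(I-P)\mathbf{u}^{\mathrm{ext}}$ is a touch cleaner than the paper's triangle-inequality split through $\mathbf{u}^{\mathrm{lin}}$, and your trace argument $\mathbb{E}\|P\boldsymbol{\xi}\|_2^2=\zeta^2\operatorname{tr}P$ replaces the paper's explicit orthonormal-basis computation with the same result in one line.
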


\begin{proof}
Let $w$ be the auxiliary vector that satisfies the linear constraint in Equation (\ref{Eqn:optimization}). Consider the Fokker-Planck equation on the extended domain $\widetilde{D}=\left[a_0-h, b_0+h\right] \times\left[a_1-h, b_1+h\right]\times\left[t_1, t_2\right] \subset \mathbb{R}^2 \times \mathbb{R}$ with boundary condition
\begin{equation} \label{eqn: boundary}
\begin{cases}\mathcal{L} w=0 & (x, y, t) \in \widetilde{D} \\ w(x, y, t)=u(x, y, t) & (x, y, t) \in \partial \widetilde{D}\end{cases}\,,
\end{equation}
where $h$ and $\delta$ are the mesh sizes, and $u$ is periodic in a way that $u(t_1,x,y) = u(t_2, x, y)$. The problem is well-posed and has a unique solution $u(x, y, t),\;(x, y, t) \in \widetilde{D}$.

Now, discretize (\ref{eqn: boundary}) by the same $(x,y,t)$ – grid steps $(h,h,\delta)$.  Label the unknown discrete values inside \(\widetilde D\) by 
\[
\mathbf{u}^{\rm lin} 
\;=\;\bigl\{\,w(i\,h + a_0 - \tfrac{h}{2},\;j\,h + a_1 - \tfrac{h}{2},\;k\,\delta + t_1 - \tfrac{\delta}{2})\bigr\},
\]
for $1 \le i\le N,\;1\le j\le M,\;1\le k\le L-1$, and put all prescribed boundary values (where \((x,y,t)\in\partial\widetilde D\)) into vector \(\mathbf{u}_0\), i.e. $\mathbf{u}_0$ are the values of $u(x,y)$ at grid points on the boundary $\partial \widetilde{D}$ of $\widetilde{D}$. We get the following form of the discretization of (\ref{eqn: boundary}) by the finite difference method over \(\widetilde D\)
\[
\left[\begin{array}{ll}
\mathbf{A} & \mathbf{0} \\
\mathbf{B} & \mathbf{C}
\end{array}\right]\left[\begin{array}{l}
\mathbf{u}^{\text {lin}} \\
\mathbf{u}_{\mathbf{0}}
\end{array}\right]=\left[\begin{array}{l}
\mathbf{0} \\
\mathbf{0}
\end{array}\right].
\]

Here:
\begin{itemize}
  \item The upper block $\mathbf{A}$ is exactly the same discretization of $\mathcal{L}$ on the interior $\widetilde D\setminus\partial\widetilde D$. In particular, $\mathbf{A}$ has \((N\!-\!2)(M\!-\!2)(L-1)\) rows, and $\mathbf{A}\,\mathbf{u}^{\rm lin}=0$ is exactly “$\mathcal{L}\,w=0$” at each grid point interior to $\widetilde D$.
  \item The lower block \(\bigl[\mathbf{B}\;\;\mathbf{C}\bigr]\) consists of the extended $(2(M-1)+2(N-1))(L-1)$ equations that enforce $w = u$ on all grid‐points of \(\partial\widetilde D\).
\end{itemize}

The extended linear system is now well-posed. By assumption \textbf{(H)}-(b), we have
\[
\left\|\mathbf{u}^{\mathrm{lin}}-\mathbf{u}^{\mathrm{ext}}\right\|_{\infty}=O\left(h^p + \delta ^q\right) .
\]
Thus by the triangle inequality, to estimate $\left\|\mathbf{u}-\mathbf{u}^{\operatorname{ext}}\right\|_2$, it is sufficient to estimate 
\[
\left\|\mathbf{u}-\mathbf{u}^{\operatorname{lin}}\right\|_2.
\]
Let $P$ be the projection matrix to $\operatorname{Ker}(\mathbf{A})$. Then equation (\ref{Eqn:optimization}) implies $\mathbf{u}=P \mathbf{v}$. Since $\mathbf{u}^{\text {lin }} \in \operatorname{Ker}(A)$, we have
\[
\mathbf{u}-\mathbf{u}^{\mathrm{lin}}=P \mathbf{v}-\mathbf{u}^{\mathrm{lin}}=P\left(\mathbf{v}-\mathbf{u}^{\mathrm{lin}}\right)=P\left(\mathbf{v}-\mathbf{u}^{\mathrm{ext}}\right)+P\left(\mathbf{u}^{\mathrm{ext}}-\mathbf{u}^{\mathrm{lin}}\right) .
\]
Take the $L^2$ norm on both sides and apply the triangle inequality to get
\begin{equation} \label{eqn:proof_1}
    h \delta^{\frac1 2}\left\|\mathbf{u}-\mathbf{u}^{\text {lin }}\right\|_2 \leq h \delta^{\frac1 2}\left\|P\left(\mathbf{v}-\mathbf{u}^{\text {ext }}\right)\right\|_2+h \delta^{\frac1 2}\left\|P\left(\mathbf{u}^{\text {ext }}-\mathbf{u}^{\text {lin }}\right)\right\|_2.
\end{equation}
The second term is easy to bound because for $\mathbf{u}^{\text {ext}}, \mathbf{u}^{\text {lin }}\in \mathbb{R}^{NM(L-1)}$,
\begin{equation} \label{eqn:proof_2}
    \begin{aligned}
    h \delta^{\frac1 2}\left\|P\left(\mathbf{u}^{\text {ext }}-\mathbf{u}^{\text {lin }}\right)\right\|_2 & \leq h \delta^{\frac1 2}\left\|\mathbf{u}^{\text {ext }}-\mathbf{u}^{\text {lin }}\right\|_2 \\
& \leq h \delta^{\frac1 2} \sqrt{NM(L-1)}\left\|\mathbf{u}^{\text {ext }}-\mathbf{u}^{\text {lin }}\right\|_{\infty} \\
& = h N (\delta (L-1))^{\frac1 2}\; O\left(h^p + \delta ^q\right)=O\left(h^p + \delta ^q\right)
    \end{aligned}
\end{equation}

By assumption \textbf{(H)}, $\mathbf{w}=\mathbf{v}-\mathbf{u}^{\text {ext }}$ is a random vector with i.i.d. entries with mean 0 and variance $\zeta^2$, and $P$ projects $\mathbf{w}$ from $\mathbb{R}^{N \times M \times (L-1)}$ onto $\operatorname{Ker}(\mathbf{A})$. Let 
\[
r=\operatorname{rank}\left(\mathbf{A} \mathbf{A}^T\right)=(N-2)(M-2) (L-1),
\]
then 
\[
\operatorname{dim(ker}(A))=N M (L-1)-r =(2 N+2 M-4) (L-1).
\]
Let $S \in S O\left(NM(L-1)\right)$ be an orthogonal matrix such that the first $(2 N+2 M-4) (L-1)$ columns of $S^T$ form an orthonormal basis of $\operatorname{Ker}(\mathbf{A})$. Let $\mathbf{s}_1, \cdots, \mathbf{s}_{NM(L-1)}$ be column vectors of $S^T$. Then $S$ is a change-of-coordinate matrix such that $\operatorname{Ker}(\mathbf{A})$ is spanned by $\mathbf{s}_1, \cdots, \mathbf{s}_{(2 N+2 M-4) (L-1)}$.

Let
\[
S \mathbf{w}=\left[\hat{w}_1, \cdots, \hat{w}_{NM(L-1)}\right]^T,
\]
We have
\[
P \mathbf{w}=\sum_{i=1}^{(2 N+2 M-4) (L-1)} \hat{w}_i \mathbf{s}_i.
\]
This implies
\[
\mathbb{E}\left[\|P \mathbf{w}\|_2\right]=\mathbb{E}\left[\left(\sum_{i=1}^{(2 N+2 M-4) (L-1)} \hat{w}_i^2\right)^{1 / 2}\right] \leq\left(\sum_{i=1}^{(2 N+2 M-4) (L-1)} \mathbb{E}\left[\hat{w}_i^2\right]\right)^{1 / 2},
\]
because $\left\{\mathbf{s}_{i=1}^{(2 N+2 M-4) (L-1)}\right\}$ are orthonormal vectors.

We have
\[
\hat{w}_i=\sum_{j=1}^{NM(L-1)} S_{j i} w_i,
\]
where $w_i$ is the $i$-th entry of $\mathbf{w}$. $S$ is orthogonal hence
\[
\sum_{j=1}^{NM(L-1)} S_{j i}^2=1.
\]
Recall that entries of $\mathbf{w}$ are i.i.d. random variables with mean zero and variance $\zeta^2$. This implies
\[
\mathbb{E}\left[\hat{w}_i^2\right]=\zeta^2 \sum_{j=1}^{NM(L-1)} S_{j i}^2=\zeta^2
\]
Hence
\[
\mathbb{E}\left[\left\|P(\mathbf{v}-\mathbf{u}^{\text {ext }})\right\|_2\right]=\mathbb{E}\left[\|P \mathbf{w}\|_2\right] \leq \sqrt{(2 N+2 M-4) L} \cdot \zeta
\]
Then
\begin{equation} \label{eqn:proof_3}
    \begin{aligned}
    h \delta^{\frac1 2}\mathbb{E}\left[\left\|P\left(\mathbf{v}-\mathbf{u}^{\text {ext }}\right)\right\|_2\right]&=O\left( h \,\delta^{\frac1 2}\sqrt{(2 N+2 M-4) (L-1)} \cdot \zeta \right)\\
    &= O\left( h \,N^{\frac{1}{2}} \cdot\delta^{\frac1 2} L^{\frac{1}{2}} \cdot \zeta\right) = O\left( h^ \frac{1}{2} \,\zeta\right)
    \end{aligned}
\end{equation}

The proof is completed by combining Equations (\ref{eqn:proof_1}), (\ref{eqn:proof_2}), and (\ref{eqn:proof_3}).

\end{proof}

\subsection{Concentration of errors}\label{section: concentration of errors}
As discussed in \cite{dobson2019efficient}, for the purely spatial case, the error term between the solution to the optimization problem and the values of the exact solution of the Fokker-Planck equation always “piles up” right along the boundary of the computational domain. In fact, once the basis of $\operatorname{Ker}(\mathbf{A})$ can be explicitly determined in 1D and 2D spatial case, any random projection into $\operatorname{Ker}(\mathbf{A})$ must concentrate its mass along the boundary of the domain (see Proposition 2.1 in \cite{dobson2019efficient}). Inspired by this observation, we can carry over a similar idea to study the concentration of errors in $\mathbb{R}^n \times \mathbb{R}$, taking into account the time variable. We will analyze, for the periodic Fokker-Planck equation, how the error term $\mathbf{u}-\mathbf{u}^{\text {ext}}$ concentrates at the boundary of the domain, which includes both the spatial boundary layers and the two endpoints of the time period. A strong concentration of error term at the boundary layer means the empirical performance of our algorithm is actually much better than the theoretical bound given in Theorem \ref{thm: error analysis}. 

In the general case, the basis of $\operatorname{Ker}(\mathbf{A})$ cannot be explicitly obtained. Now we need to consider a numerical domain with $N \times N \times (L-1)$ grids, corresponding to a square spatial domain at each discrete time level. Let $\Theta_D$ denote the subspace spanned by coordinate vectors corresponding to the boundary layer with thickness $D$. In other words,
\[
\begin{aligned}
  \Theta_D \;=\; \operatorname{span}\bigl\{
    \mathbf{e}_{i,j,k} \mid\; &i \le D \;\text{or}\; j \le D \;\text{or}\; k \le D \;\text{or}\\
      &i \ge N - D \;\text{or}\; j \ge N - D \;\text{or}\; k \ge (L-1) - D
  \bigr\}\,,
\end{aligned}
\]
where $\mathbf{e}_{i,j,k}$ is the elementary vector corresponding to node $(i,j,k)$. 

Denote $dk:= \operatorname{dim}(\operatorname{Ker(\mathbf{A})})=(2 N+2 M-4) (L-1)$, as computed in the proof of Theorem \ref{thm: error analysis}. Then we define a sequence of angles $0 \leq \theta_1 \leq \cdots \leq \theta_{dk} \leq \pi / 2$ that describe the angle between $\operatorname{Ker}(\mathbf{A})$ and $\Theta_D$. Here $\{ \theta_i\}_{i=1}^{dk}$ are called principal angles. 

We can compute the principal angles between $\operatorname{Ker}(\mathbf{A})$ and $\Theta_D$. If most principal angles are small, $\operatorname{Ker}(\mathbf{A})$ is almost parallel to $\Theta_D$, and the projection of a random vector onto $\Theta_D$ preserves most of its length. Equivalently, we can say that there is a concentration of errors at the boundary of the domain.

Here is the construction of principal angles $\theta_1, \theta_2,\cdots, \theta_{dk}$. The first one is
\[
\theta_1=\min \left\{\left.\arccos \left(\frac{\boldsymbol{\alpha} \cdot \boldsymbol{\beta}}{\|\boldsymbol{\alpha}\|\|\boldsymbol{\beta}\|}\right) \right\rvert\, \boldsymbol{\alpha} \in \operatorname{Ker}(\mathbf{A}), \boldsymbol{\beta} \in \Theta_D\right\}=\angle\left(\boldsymbol{\alpha}_1, \boldsymbol{\beta}_1\right) .
\]
Other angles are defined recursively with
\[
\theta_i=\min \left\{\left.\arccos \left(\frac{\boldsymbol{\alpha} \cdot \boldsymbol{\beta}}{\|\boldsymbol{\alpha}\|\|\boldsymbol{\beta}\|}\right) \right\rvert\, \boldsymbol{\alpha} \in \operatorname{Ker}(\mathbf{A}), \boldsymbol{\beta} \in \Theta_D, \boldsymbol{\alpha} \perp \boldsymbol{\alpha}_j, \boldsymbol{\beta} \perp \boldsymbol{\beta}_j, \forall 1 \leq j \leq i-1\right\},
\]
such that $\angle\left(\boldsymbol{\alpha}_i, \boldsymbol{\beta}_i\right)=\theta_i$. Without loss of generality, assume $\left\|\boldsymbol{\alpha}_i\right\|=1$ for all $1 \leq i \leq$ $dk$. Since $\operatorname{dim}\left(\Theta_D\right) \geq \operatorname{dim}(\operatorname{Ker}(\mathbf{A}))$, it is easy to see that $\left\{\boldsymbol{\alpha}_1, \cdots, \boldsymbol{\alpha}_{dk}\right\}$ forms an orthonormal basis of $\operatorname{Ker}(\mathbf{A})$. Recall that $\mathbf{u} \in \operatorname{Ker}(\mathbf{A})$ and that the error $\mathbf{u}-\mathbf{u}^{\text {ext }}$ is approximated by the projection of a random vector $\mathbf{w}$ with i.i.d. entries onto the subspace $\operatorname{Ker}(\mathbf{A})$. Hence we can further assume that $\boldsymbol{\xi}=\mathbf{u}-\mathbf{u}^{\text {ext }}$ is approximated by a random vector
\begin{equation} \label{eqn: xi}
    \boldsymbol{\xi}=\sum_{i=1}^{dk} c_i \boldsymbol{\alpha}_i,
\end{equation}
where $c_i$ are i.i.d. random variables with mean 0 and variance $\zeta^2$. Define
\[
p_D(\boldsymbol{\xi})=\frac{\mathbb{E}\left[\left\|P_{\Theta_D} \boldsymbol{\xi}\right\|\right]}{\mathbb{E}[\|\boldsymbol{\xi}\|]}
\]
as the mean weight of $\boldsymbol{\xi}$ projected onto the boundary layer, where $P_{\Theta_D}$ is the projection matrix onto $\Theta_D$. Assuming $\boldsymbol{\xi}$ satisfies Equation (\ref{eqn: xi}), we have
\[
p_D(\boldsymbol{\xi})=\frac{1}{dk} \sum_{i=1}^{dk} \cos \left(\theta_i\right)
\]
Therefore, $p_D(\boldsymbol{\xi})$ measures the degree of concentration of errors on the boundary layer with thickness $D$. The greater the value of $p_D(\boldsymbol{\xi})$ is, the higher the probability of the concentration of errors at the boundary.

Now we consider the following three examples in $2D \times 1D$ and observe the related principal angles between $\operatorname{Ker}(\mathbf{A})$ and $\Theta_D$ for $D=1,2,3$.

\begin{itemize}
    \item Example 1 (diffusion process without drift)
    \[
    d \boldsymbol{X}_t= \varepsilon \, \mathrm{d} \boldsymbol{W}_t \,,
    \]
    with $\varepsilon = 1$.
    \item Example 2
    \[
    \left\{\begin{array}{l}
    d X_t=\left(-(X_t+Y_t-f(t))+\frac{1}{2} f^{\prime}(t)\right) \mathrm{d} t+\varepsilon_1 \, \mathrm{d} W_t^x \\
    d Y_t=\left(-(X_t+Y_t-f(t))+\frac{1}{2} f^{\prime}(t)\right) \mathrm{d} t+\varepsilon_2 \, \mathrm{d} W_t^y
    \end{array}\right. \, ,
    \]
    with $f(t) = \sin{t}$, and $\varepsilon_1 = \varepsilon_2 = 1$.
    \item Example 3
    \[
    \left\{\begin{array}{l}
    d X_t=\left(-4 X_t\left(X_t^2+Y_t^2-f(t)\right)+\frac{X_t f^{\prime}(t)}{2\left(X_t^2+Y_t^2\right)}\right) \mathrm{d} t+ \varepsilon_1 \, \mathrm{d} W_t^x \\
    d Y_t=\left(-4 Y_t\left(X_t^2+Y_t^2-f(t)\right)+\frac{Y_t f^{\prime}(t)}{2\left(X_t^2+Y_t^2\right)}\right) \mathrm{d} t+\varepsilon_2 \, \mathrm{d} W_t^y
    \end{array}\right. \, ,
    \]
    with $f(t) = 5 + \sin{t}$, and $\varepsilon_1 = \varepsilon_2 = \sqrt{2}$.
\end{itemize}

\begin{figure}[htbp]
  \centering

  \begin{subfigure}[b]{0.32\textwidth} 
    \centering
    \includegraphics[width=1.15\textwidth]{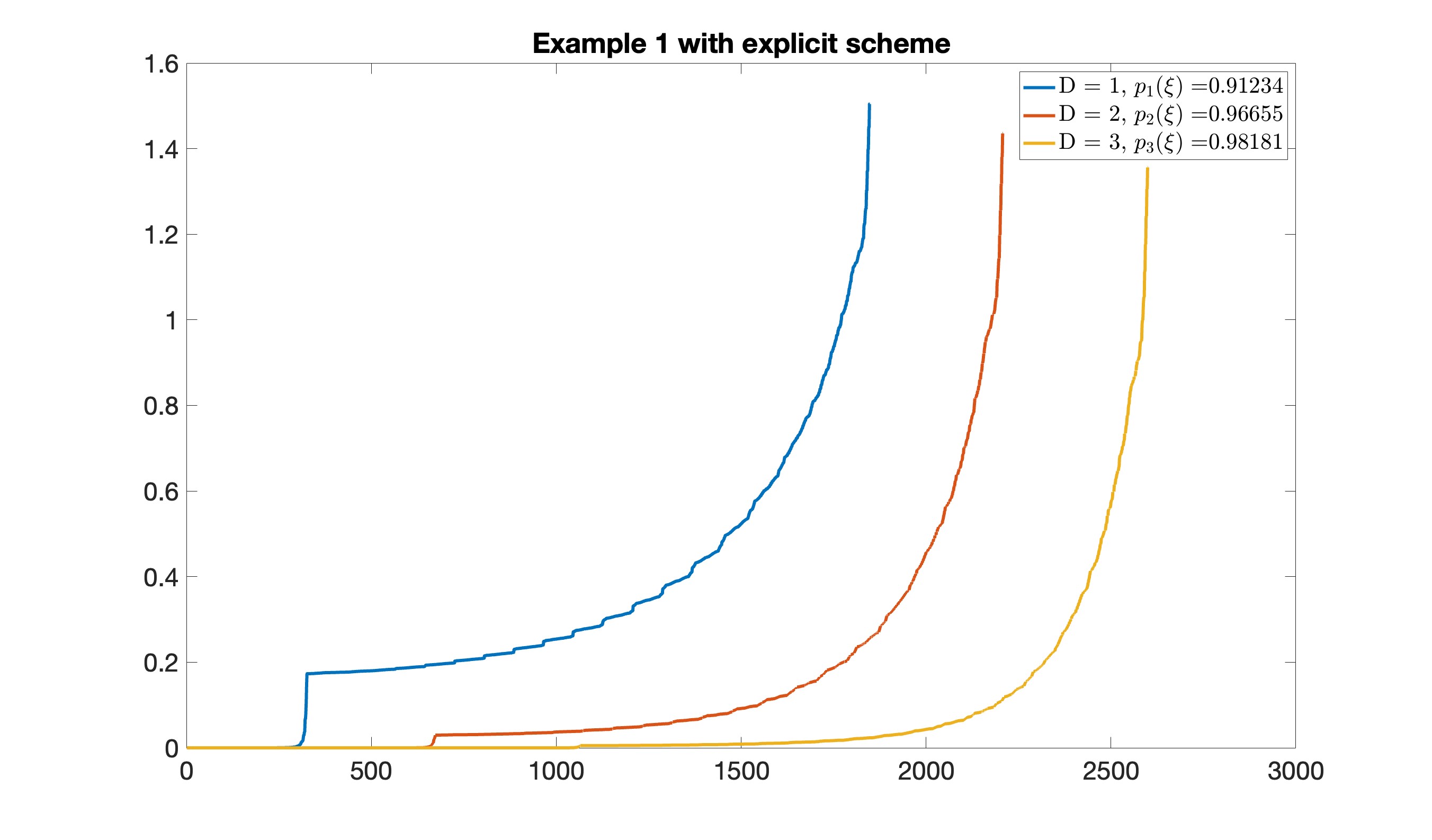} 
  \end{subfigure}
  \hfill
  \begin{subfigure}[b]{0.32\textwidth}
    \centering
    \includegraphics[width=1.15\textwidth]{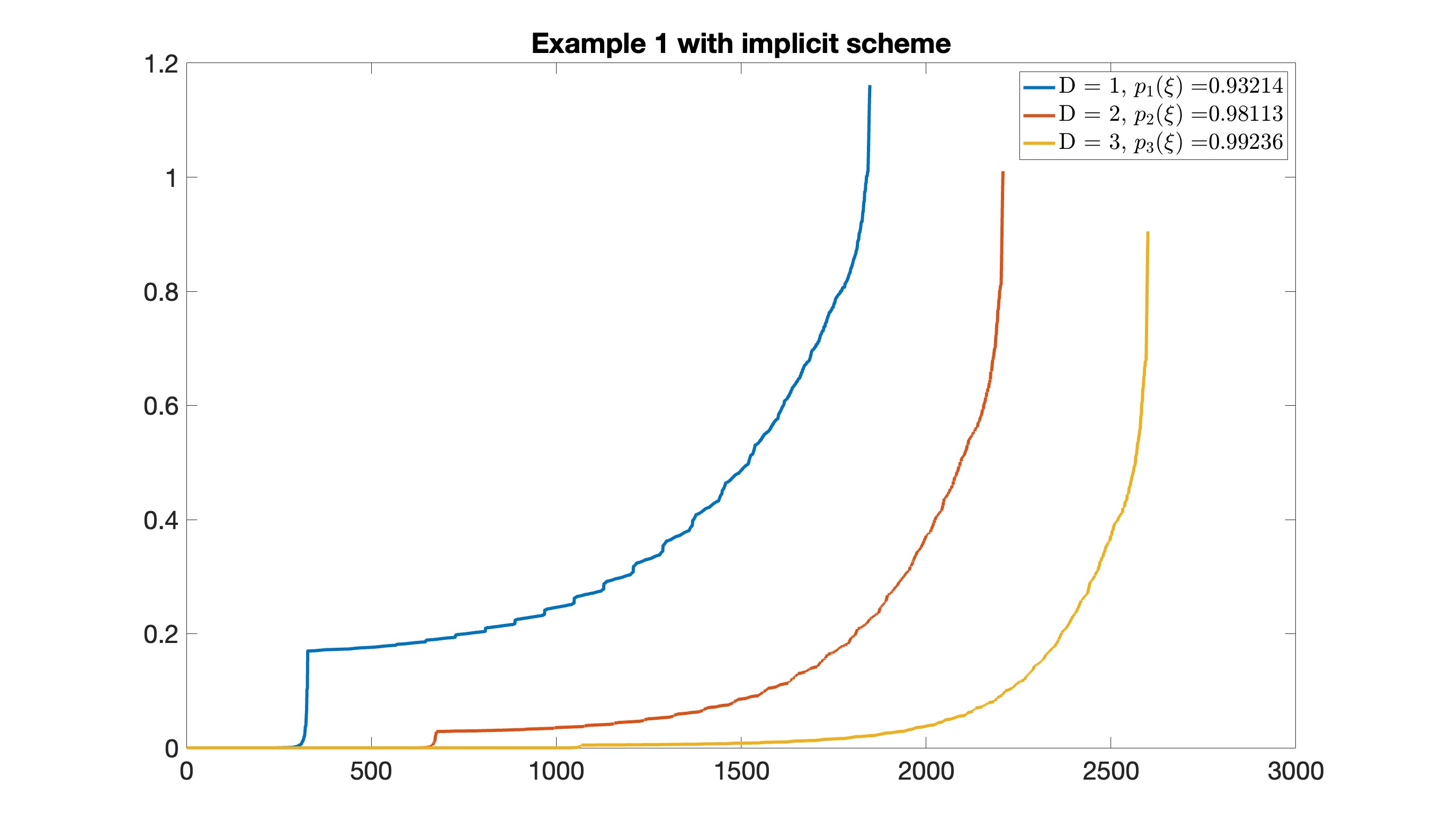}
  \end{subfigure}
  \hfill
  \begin{subfigure}[b]{0.32\textwidth}
    \centering
    \includegraphics[width=1.15\textwidth]{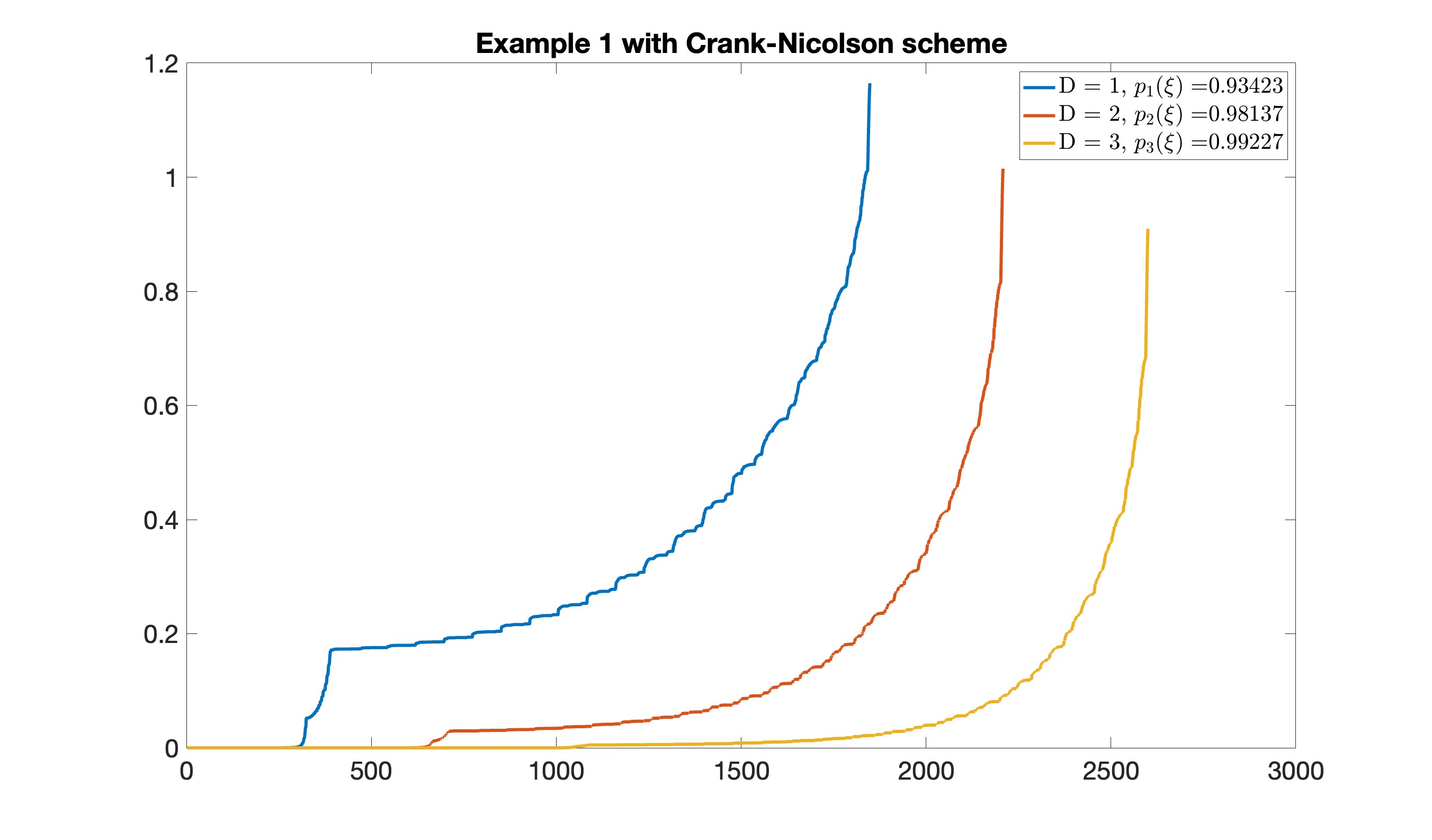}
  \end{subfigure}
  
  \vspace{1.7em} 

  \begin{subfigure}[b]{0.32\textwidth}
    \centering
    \includegraphics[width=1.15\textwidth]{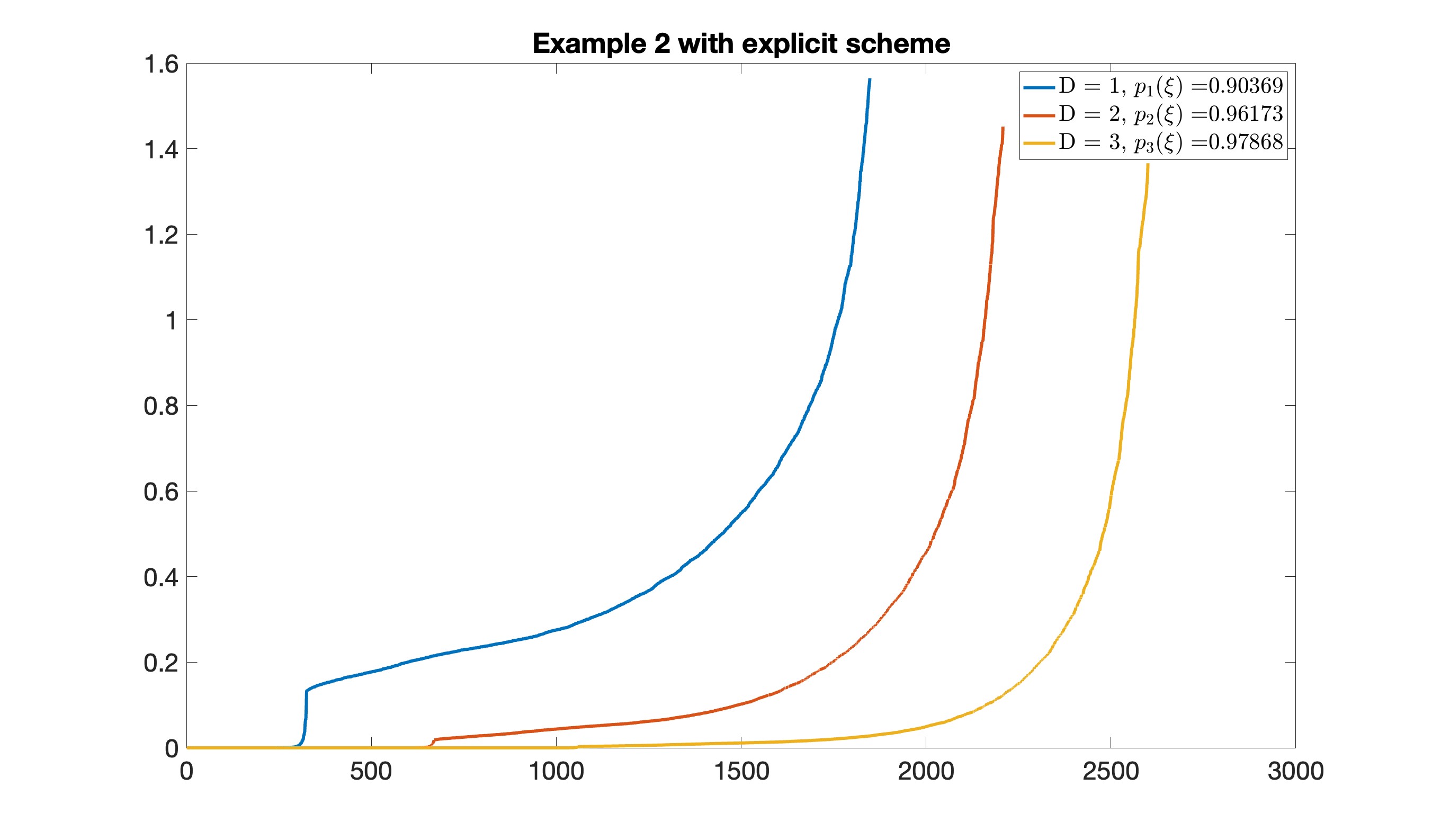}
    \label{fig:img4}
  \end{subfigure}
  \hfill
  \begin{subfigure}[b]{0.32\textwidth}
    \centering
    \includegraphics[width=1.15\textwidth]{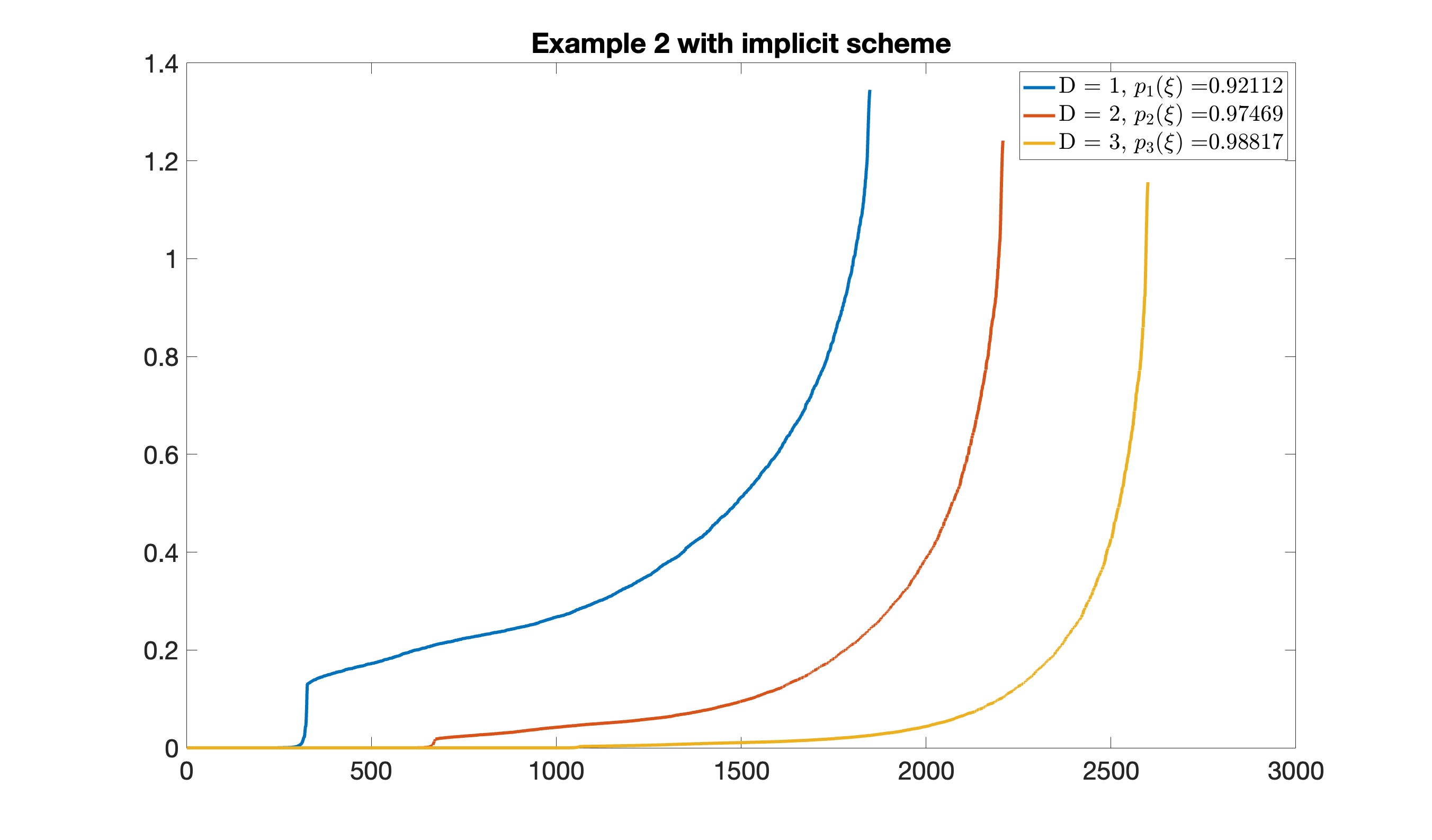}
    \label{fig:img5}
  \end{subfigure}
  \hfill
  \begin{subfigure}[b]{0.32\textwidth}
    \centering
    \includegraphics[width=1.15\textwidth]{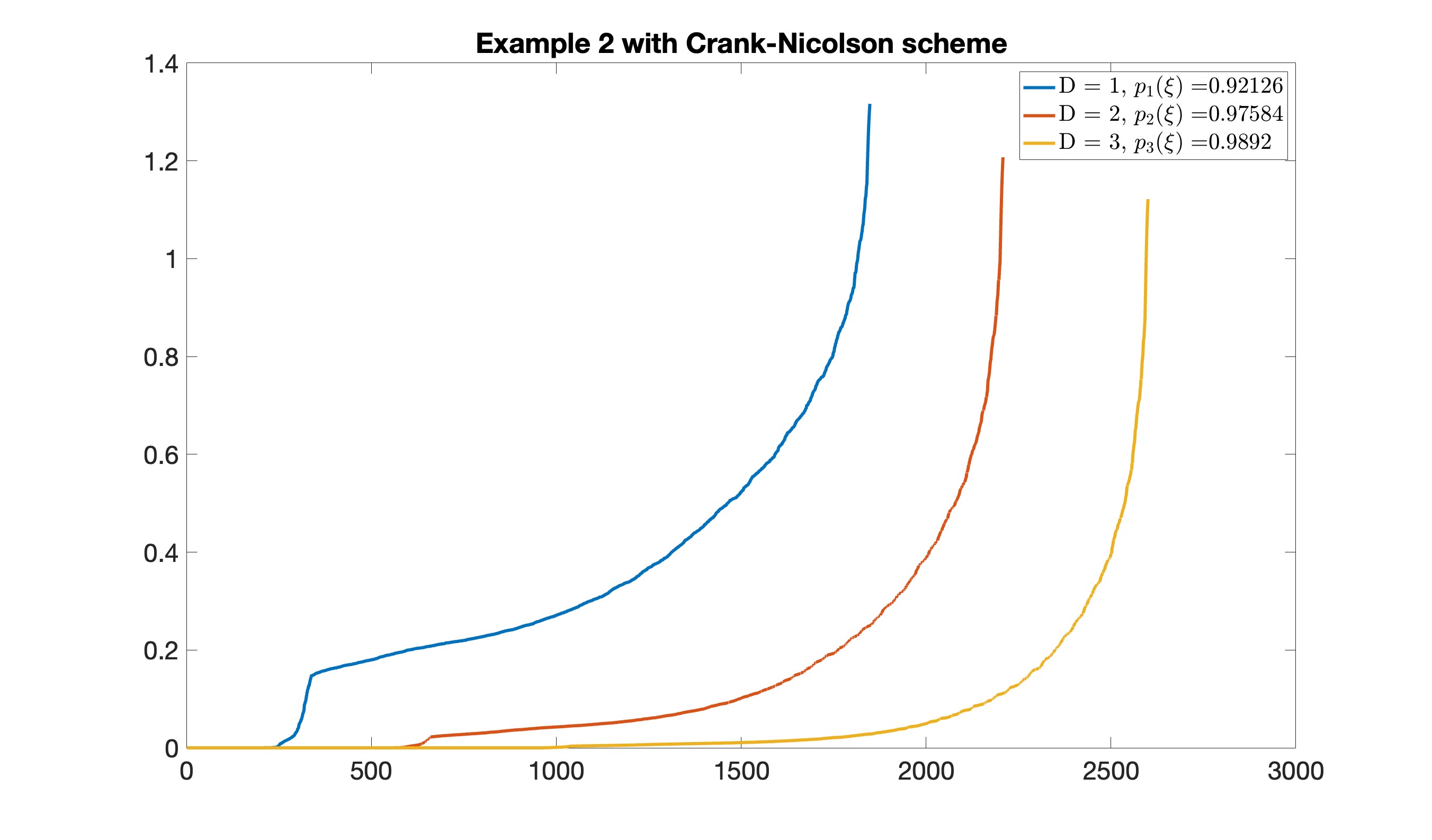}
    \label{fig:img6}
  \end{subfigure}

  \vspace{1em} 

  \begin{subfigure}[b]{0.32\textwidth}
    \centering
    \includegraphics[width=1.15\textwidth]{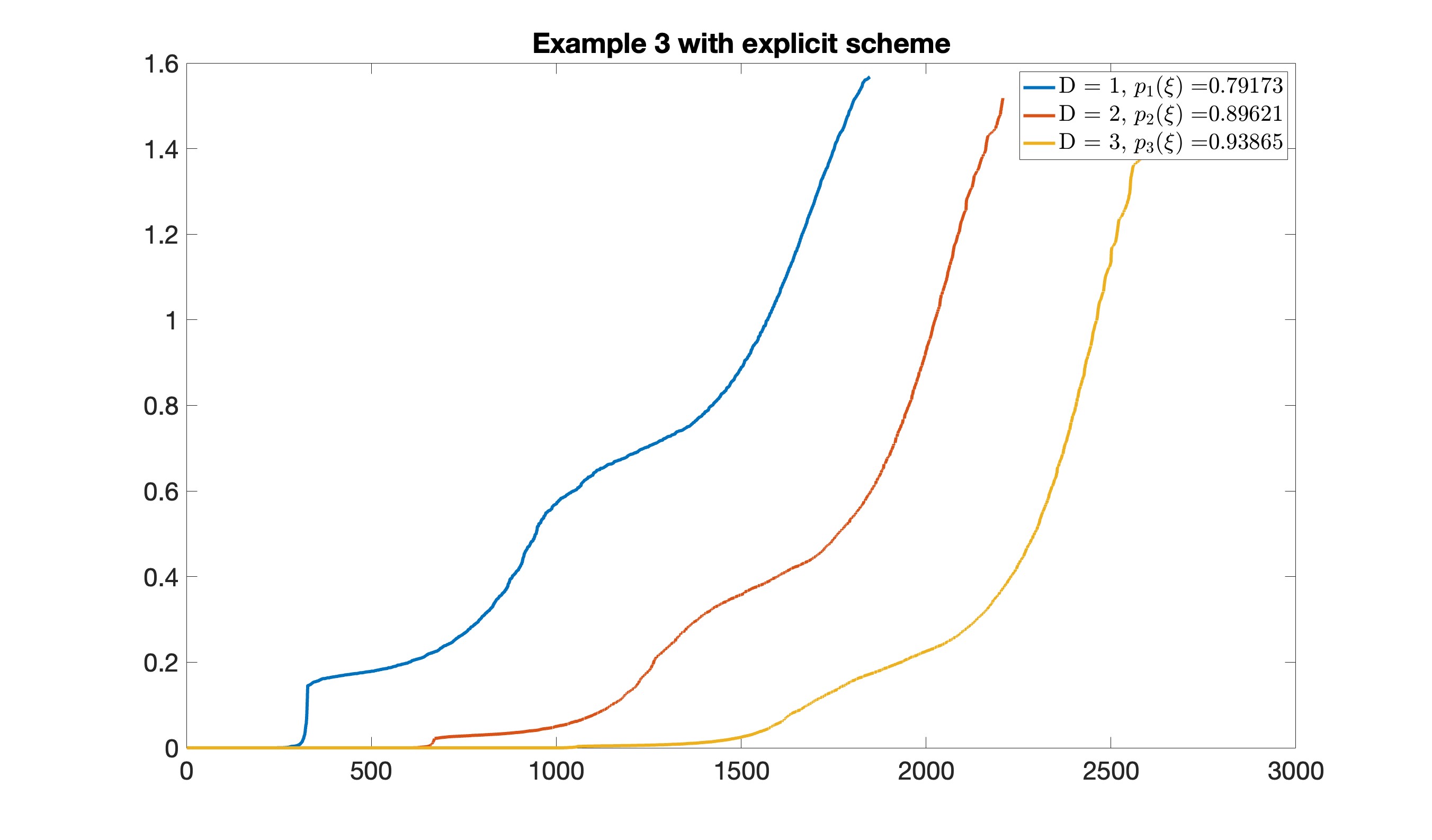}
    \label{fig:img7}
  \end{subfigure}
  \hfill
  \begin{subfigure}[b]{0.32\textwidth}
    \centering
    \includegraphics[width=1.15\textwidth]{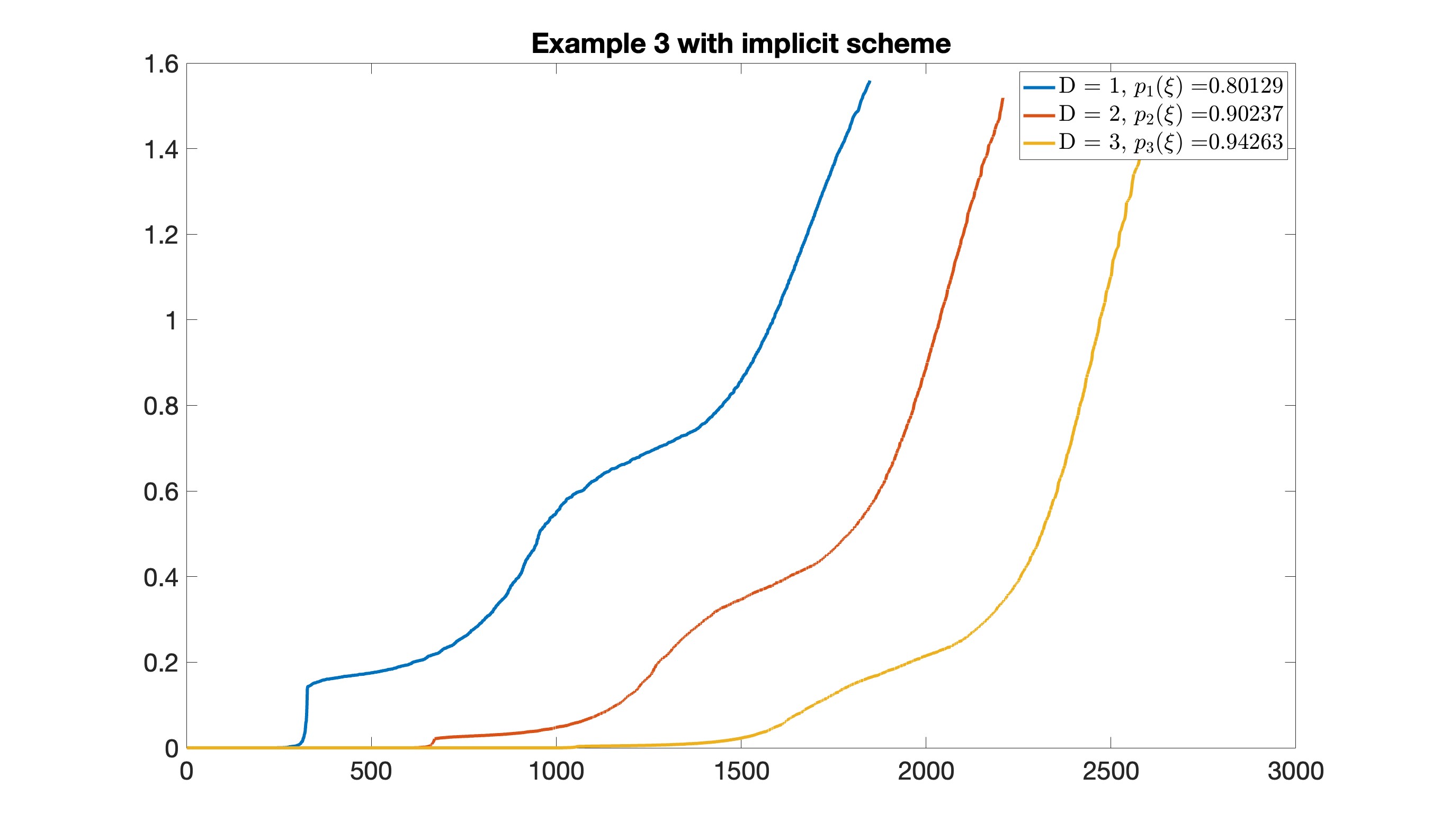}
    \label{fig:img8}
  \end{subfigure}
  \hfill
  \begin{subfigure}[b]{0.32\textwidth}
    \centering
    \includegraphics[width=1.15\textwidth]{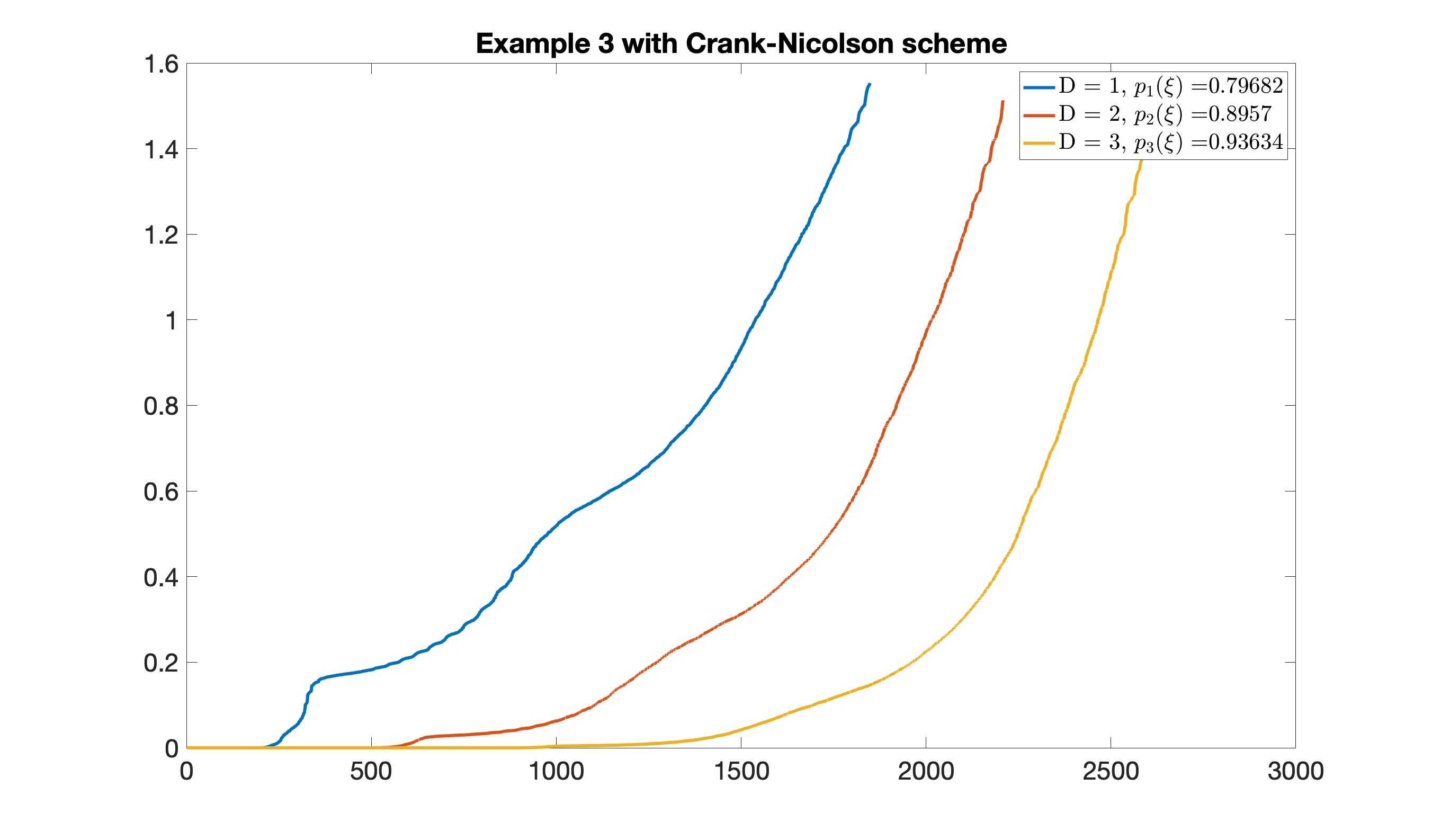}
    \label{fig:img9}
  \end{subfigure}

  \caption{principal angles between $\operatorname{Ker}\mathbf{A}$ and $\Theta_D$ for $D=1,2,3$. First row: Example 1; second row: Example 2; third row: Example 3. Left: Forward Euler method; middle: Backward Euler method; right: Crank-Nicolson method.}
  \label{principal angles for three mthods}
\end{figure}

Principal angles can be numerically computed by an SVD decomposition. In Figure \ref{principal angles for three mthods}, we plot all principal angles of Example 1, 2, 3 for $D=1,2,3$ in an ascending order with three discretization schemes of 2D periodic Fokker-Planck equations, which are Forward Euler method, Backward Euler method and the Crank-Nicolson method, respectively. Each row in the figure corresponds to one example. In each row, the left panel corresponds to Forward Euler method, the middle corresponds to Backward Euler method, and the right corresponds to Crank-Nicolson method. The size of a block is $(20+D)\times (20+D)\times (20 + D)$. We observe that the mean weight of $\boldsymbol{\xi}$ projected onto $\Theta_D$ is very large. As a result, most of the error term $\mathbf{u}-\mathbf{u}^{\text {ext}}$ is concentrated in the boundary layer. We also remark that different discretization schemes do not have significant influence on the degree of error concentration. However, the Crank-Nicolson scheme is preferred, since it yields a larger mean weight than the implicit and explicit schemes, meaning that more of the error is "pushed" to the boundary.

\begin{figure}
    \centering
    \begin{minipage}{0.45\linewidth}
        \centering
        \includegraphics[width=1.15\linewidth]{with_drift_implicit.jpg} 
    \end{minipage}\hfill
    \begin{minipage}{0.45\linewidth}
        \centering
        \includegraphics[width=1.15\linewidth]{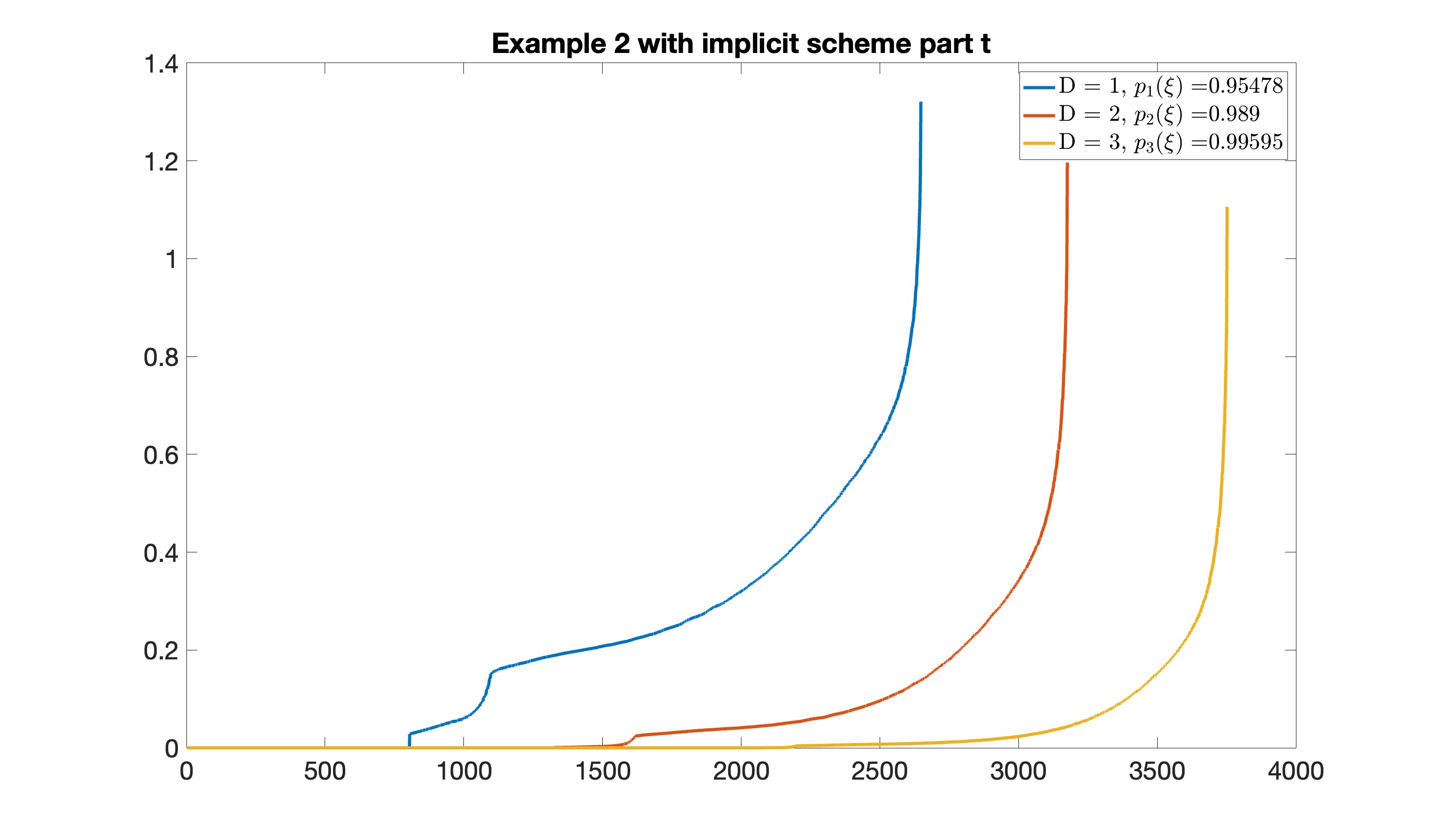} 
    \end{minipage}
    \caption{Implicit scheme for computing the principal angles between $\operatorname{Ker}\mathbf{A}$ and $\Theta_D$ for $D=1,2,3$. Left: $t$ in whole time interval. Right: $t$ in part time interval.}
    \label{fig: Case 1 and 2}
\end{figure}

\begin{figure}
    \centering
    \begin{minipage}{0.45\linewidth}
        \centering
        \includegraphics[width=1.15\linewidth]{with_drift_explicit.jpg} 
    \end{minipage}\hfill
    \begin{minipage}{0.45\linewidth}
        \centering
        \includegraphics[width=1.15\linewidth]{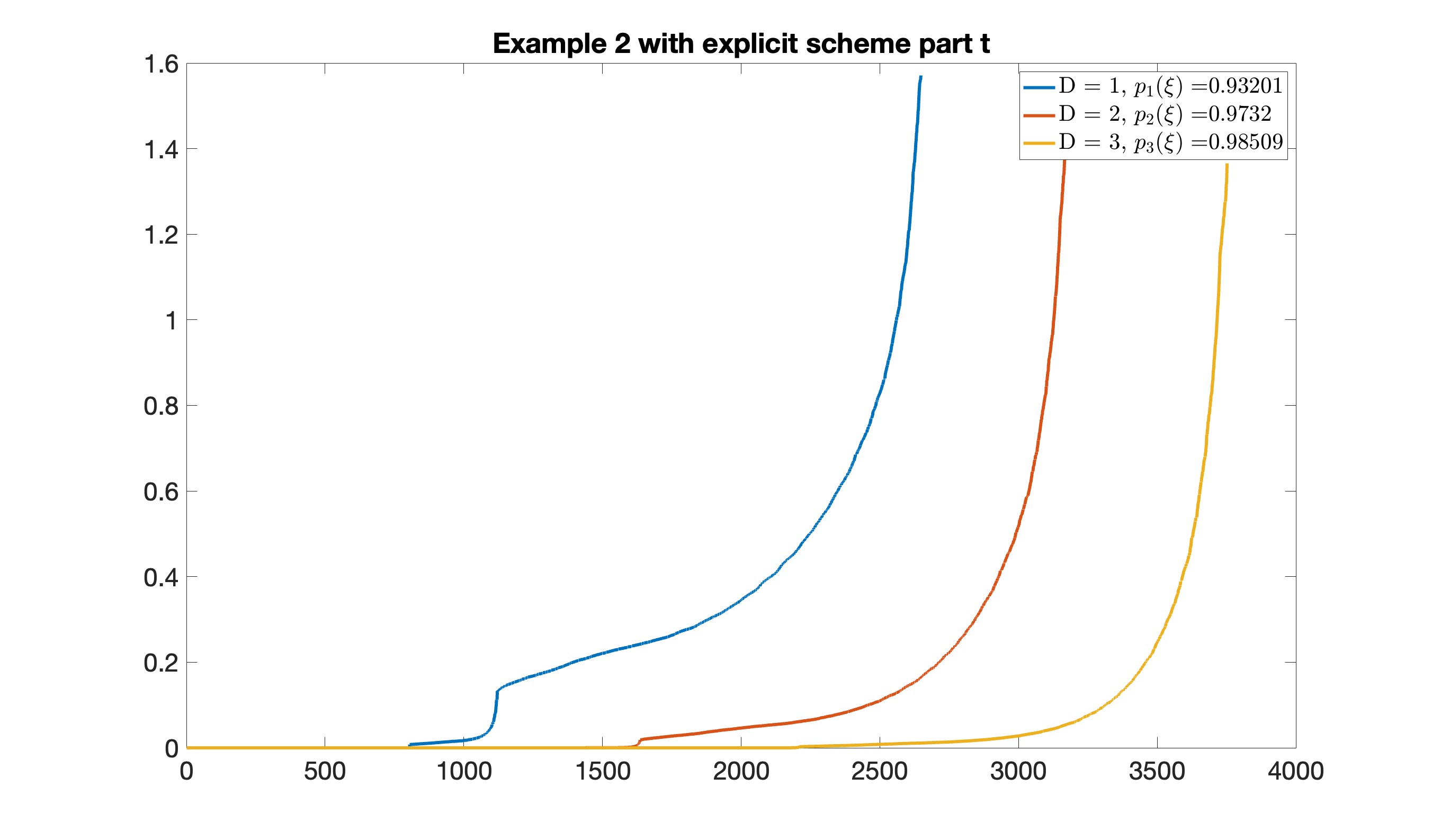} 
    \end{minipage}
    \caption{Explicit scheme for computing the principal angles between $\operatorname{Ker}\mathbf{A}$ and $\Theta_D$ for $D=1,2,3$. Left: $t$ in whole time interval. Right: $t$ in part time interval.}
    \label{fig: explicit Case 1 and 2}
\end{figure}

\begin{figure}
    \centering
    \begin{minipage}{0.45\linewidth}
        \centering
        \includegraphics[width=1.15\linewidth]{with_drift_Crank-Nicolson.jpg} 
    \end{minipage}\hfill
    \begin{minipage}{0.45\linewidth}
        \centering
        \includegraphics[width=1.15\linewidth]{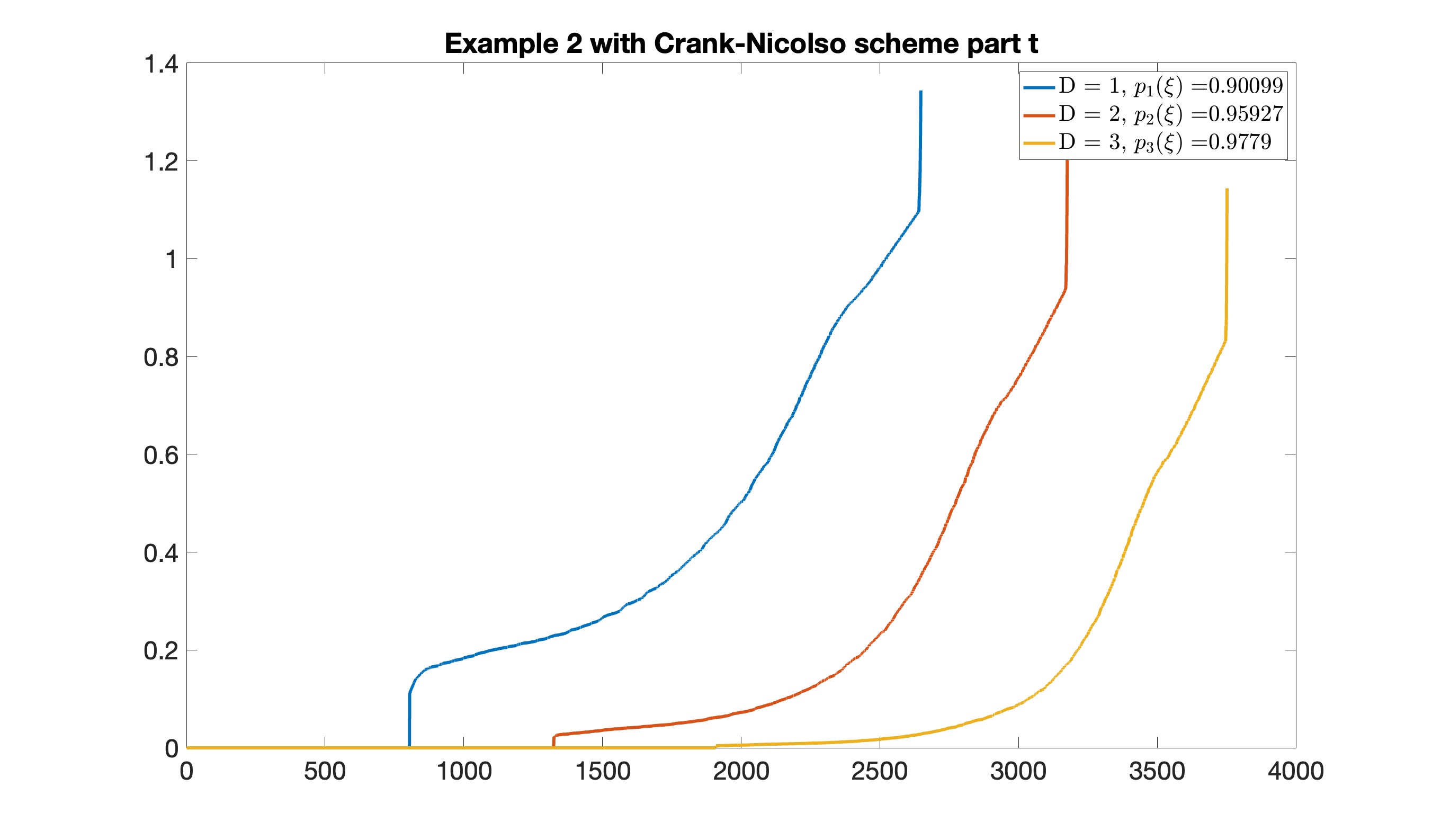} 
    \end{minipage}
    \caption{Crank-Nicolson scheme for computing the principal angles between $\operatorname{Ker}\mathbf{A}$ and $\Theta_D$ for $D=1,2,3$. Left: $t$ in whole time interval. Right: $t$ in part time interval.}
    \label{fig: Crank-Nicolson Case 1 and 2}
\end{figure}

The time variable plays an important role in the periodic Fokker-Planck equation studied in this paper. Moreover, the Fokker-Planck equation only involves the first order derivative in the time variable, potentially meaning a less regularization effect in the optimization. Therefore, we need to examine whether the projection onto $\mathrm{Ker}(\mathbf{A})$ has a similar effect in the time direction as in the spatial direction. To check this, we compute two different cases for time $t$. In Case 1, we consider the whole time interval $[0,2\pi]$ as an example. Time $t$ should satisfy the periodic constrain. Then, matrix $\mathbf{A}$ is $(N-2)^2(L-1) \times N^2(L-1)$, and the dimension of $\operatorname{Ker}(\mathbf{A})$, $dk = (4N-4)(L-1)$. In Case 2, we randomly choose a small block within the whole time interval and focus only on that block. In other words, time $t$ in this small block can be totally regarded as a spatial variable. Matrix $\mathbf{A}$ is $(N-2)^2((L-1)-2) \times N^2(L-1)$, and the dimension of $\operatorname{Ker}(\mathbf{A})$, $dk = (4N-4)(L-1) + 2(N-2)^2$. In Figure \ref{fig: Case 1 and 2} and Figure \ref{fig: explicit Case 1 and 2}, we can see the mean weight of $\boldsymbol{\xi}$ projected to $\Theta_D$ in Case 2 (part time interval) is larger than the mean weight of $\boldsymbol{\xi}$ projected to $\Theta_D$ in Case 1 (whole time interval). Of course, on the other hand, $\mathrm{Ker}(\mathbf{A})$ has a higher dimension in Case 2, which means there are more zero principal angles. Those zero principal angles contribute to a larger mean weight $p(\boldsymbol{\xi})$. Nevertheless, this numerical experiment at least shows that the error term is still "pushed" to the boundary even when the numerical domain does not cover the entire time period. However, in Figure \ref{fig: Crank-Nicolson Case 1 and 2}, the mean weight of $\boldsymbol{\xi}$ projected onto $\Theta_D$ in Case 2 (part time interval) is smaller than the mean weight of $\boldsymbol{\xi}$ projected to $\Theta_D$ in Case 1 (whole time interval). The Crank-Nicolson method is not affected by the higher dimension of $\mathrm{Ker}(\mathbf{A})$ in the part time case. Thus, the Crank-Nicolson scheme is preferred to the implicit and explicit scheme.

The metric $p_D(\boldsymbol{\xi})$ does not show how the error concentrates at a particular section of the boundary. Figure \ref{fig: error distribution} provides an additional empirical test of the spatial distribution of error terms at each fixed time. The Fokker-Planck equation is taken from Example 3, and the corresponding ring density function is $u(x, y, t)=e^{-\left(x^2+y^2-f(t)\right)^2}$ with $f(t) = 5 + \sin t$. We choose a $50 \times 50 \times 50$ grid on $[-3,3] \times [-3,3] \times [0,2\pi]$ and solve the Fokker-Planck equation using a data-driven finite difference solver, with sample size $N = 10^6$ and time step $dt = 0.001$. $u_{\text{notp}}$ denotes the numerical solution without the periodic condition on the time variable $t$, and $u_{\text{p}}$ denotes the numerical solution with the periodic condition on the time variable $t$, i.e., $u_{\text{p}}(t=0) = u_{\text{p}}(t=2\pi)$. In comparison, we observe that at the temporal boundaries $t=0$ and $t=2\pi$, the error term $u_{\text{p}}-u^{\text{ext}}$ concentrates at the boundary layer, whereas the the error term $u_{\text{notp}}-u^{\text{ext}}$ is randomly distributed throughout the whole spatial domain. In other words, considering better accuracy at the time boundary, letting the numerical domain cover the entire period is still preferred because with the periodic boundary condition, the error term concentrates at the spatial boundary regardless of the time variable.

\begin{figure}[htbp]
    \centering

    \begin{subfigure}[b]{0.45\textwidth}
        \includegraphics[width=1.25\textwidth]{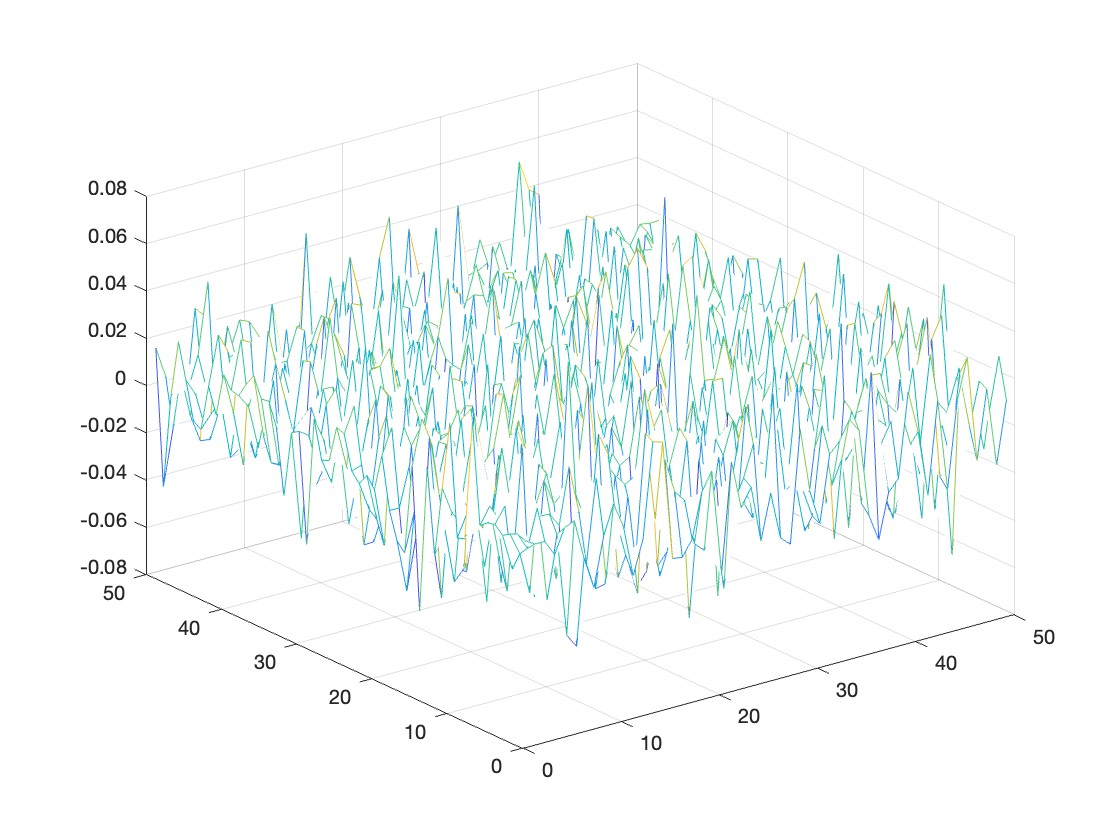}
        \caption{Error term $u_{\text{notp}}-u^{\text{ext}}$ at $t=0$}
    \end{subfigure}
    \hfill
    \begin{subfigure}[b]{0.45\textwidth}
        \includegraphics[width=1.25\textwidth]{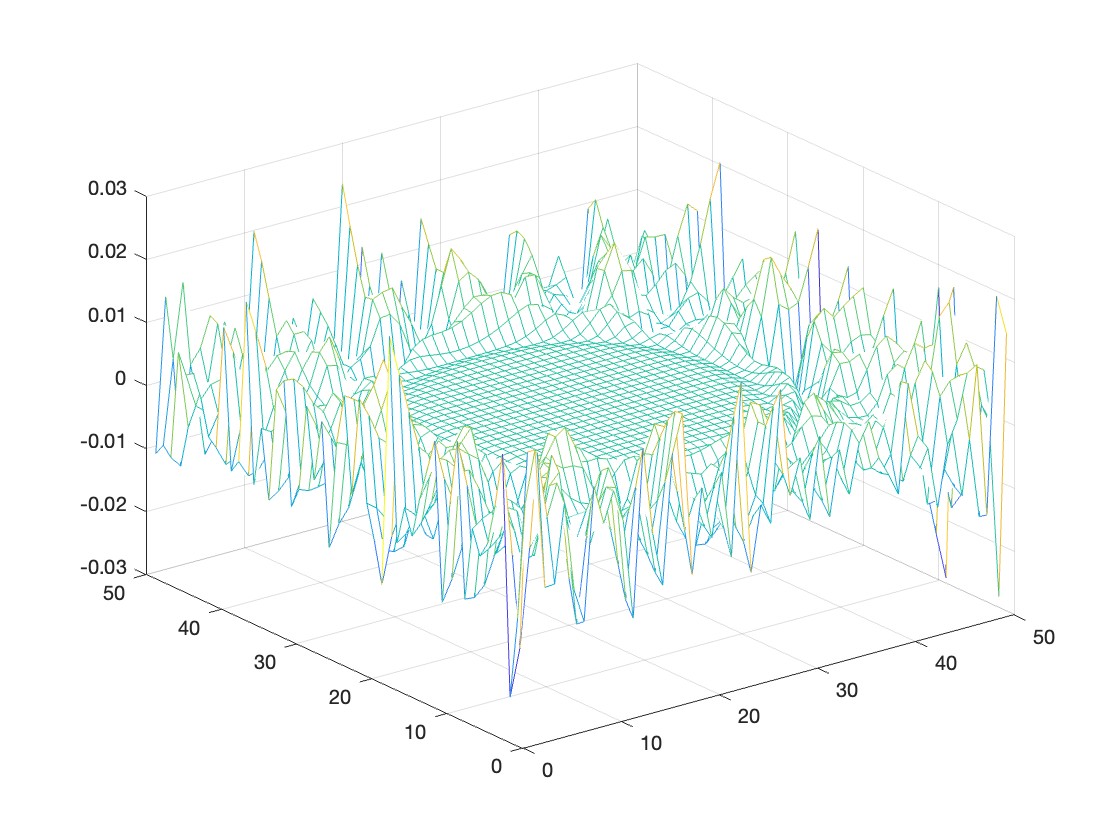}
        \caption{Error term $u_{\text{p}}-u^{\text{ext}}$ at $t=0$}
    \end{subfigure}

    \vskip\baselineskip

    \begin{subfigure}[b]{0.45\textwidth}
        \includegraphics[width=1.25\textwidth]{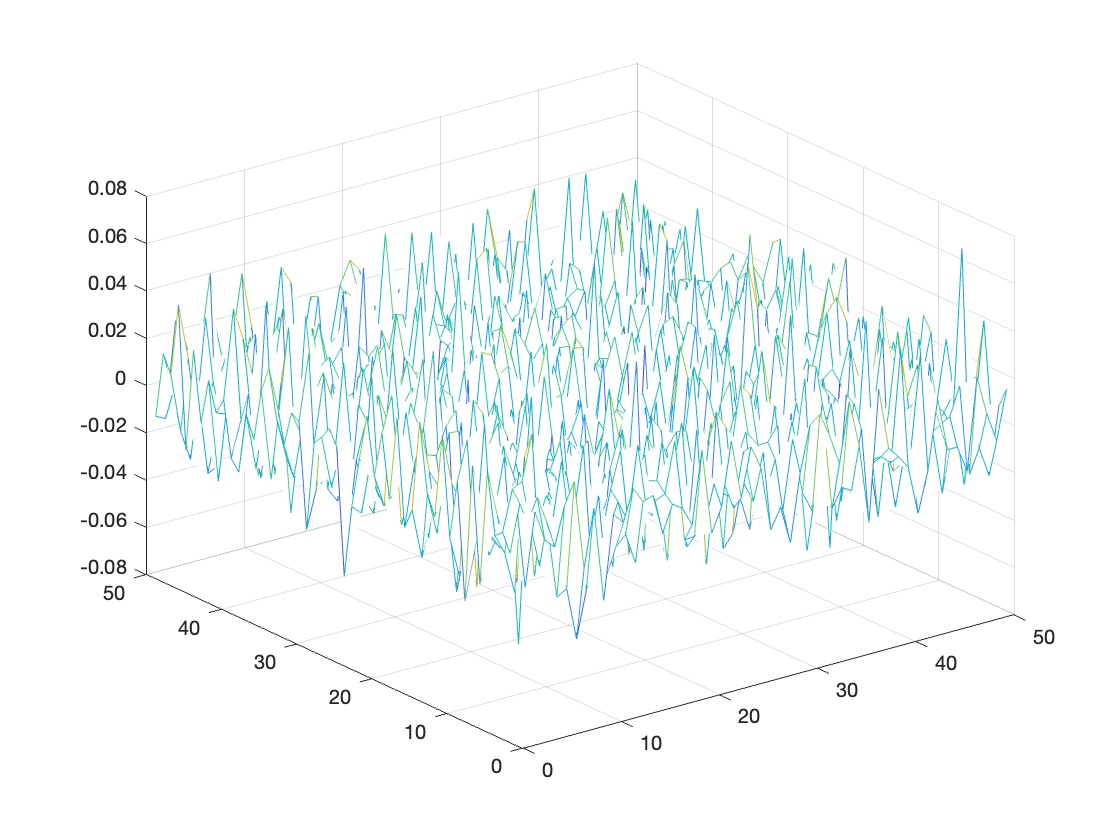}
        \caption{Error term $u_{\text{notp}}-u^{\text{ext}}$ at $t=2\pi$}
    \end{subfigure}
    \hfill
    \begin{subfigure}[b]{0.45\textwidth}
        \includegraphics[width=1.25\textwidth]{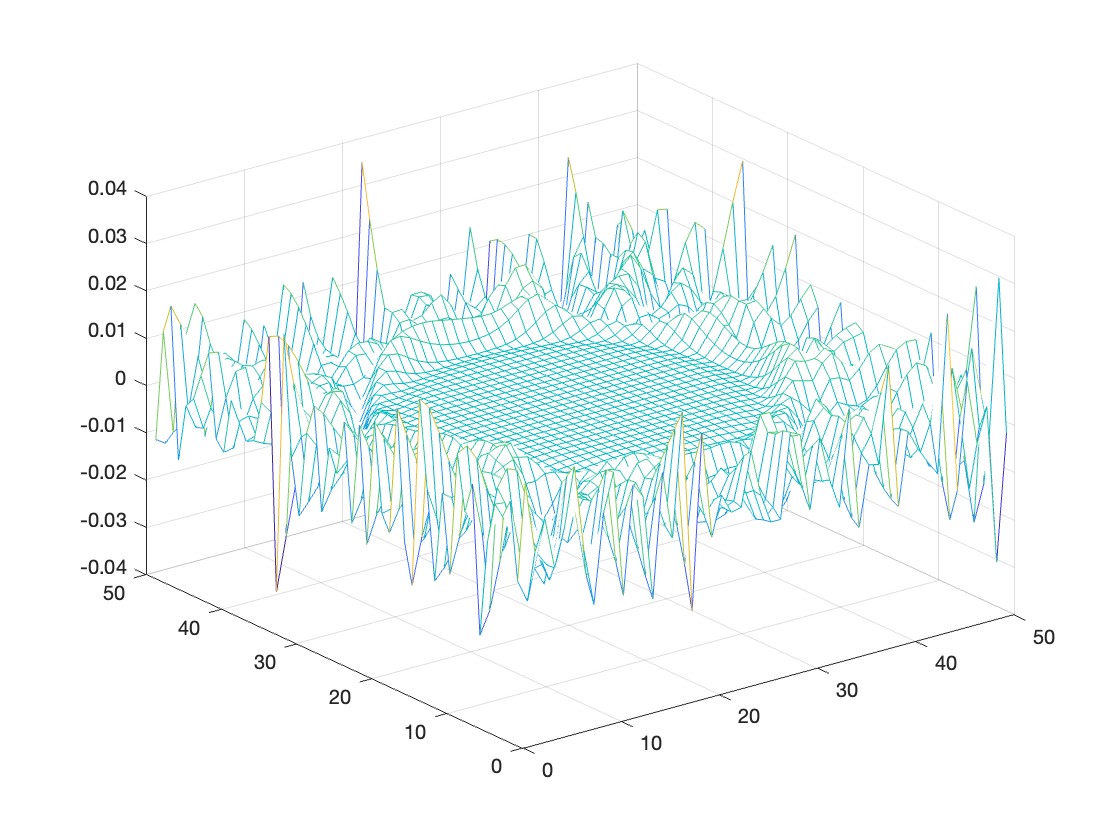}
        \caption{Error term $u_{\text{p}}-u^{\text{ext}}$ at $t=2\pi$}
    \end{subfigure}

    \vskip\baselineskip

    \begin{subfigure}[b]{0.45\textwidth}
        \includegraphics[width=1.25\textwidth]{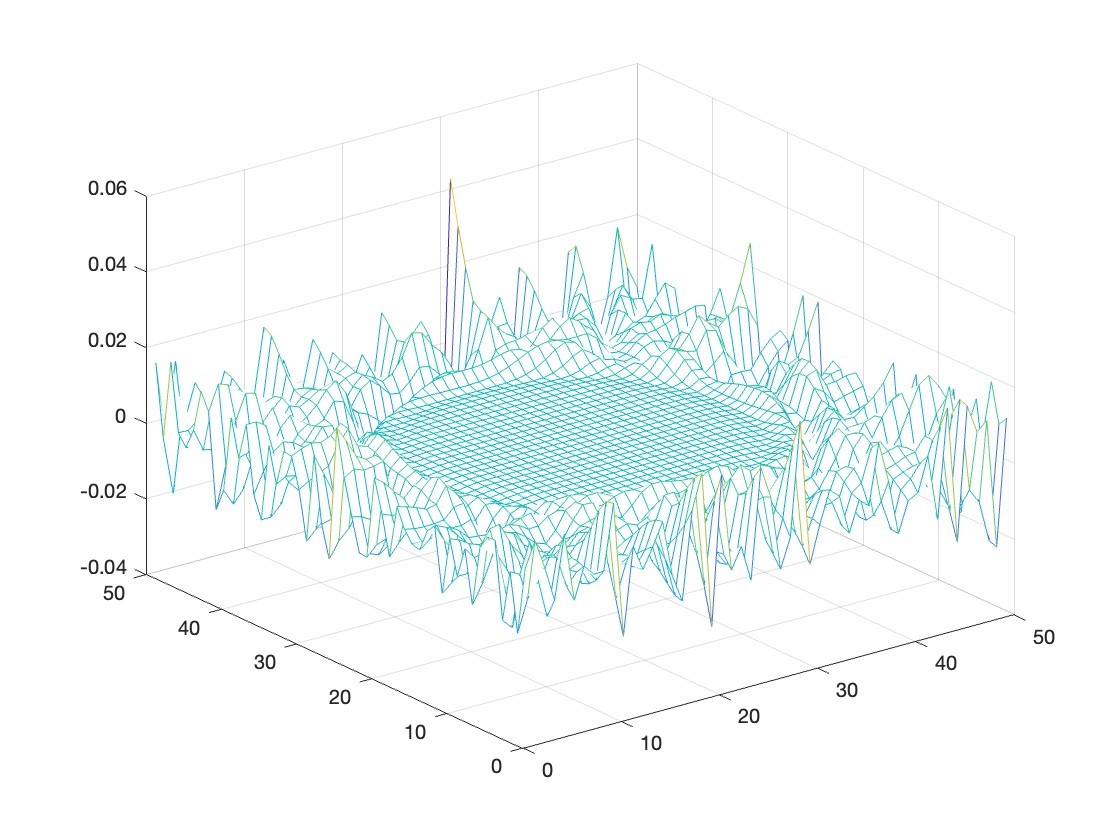}
        \caption{Error term $u_{\text{notp}}-u^{\text{ext}}$ at fixed time $t_0=\pi$}
    \end{subfigure}
    \hfill
    \begin{subfigure}[b]{0.45\textwidth}
        \includegraphics[width=1.25\textwidth]{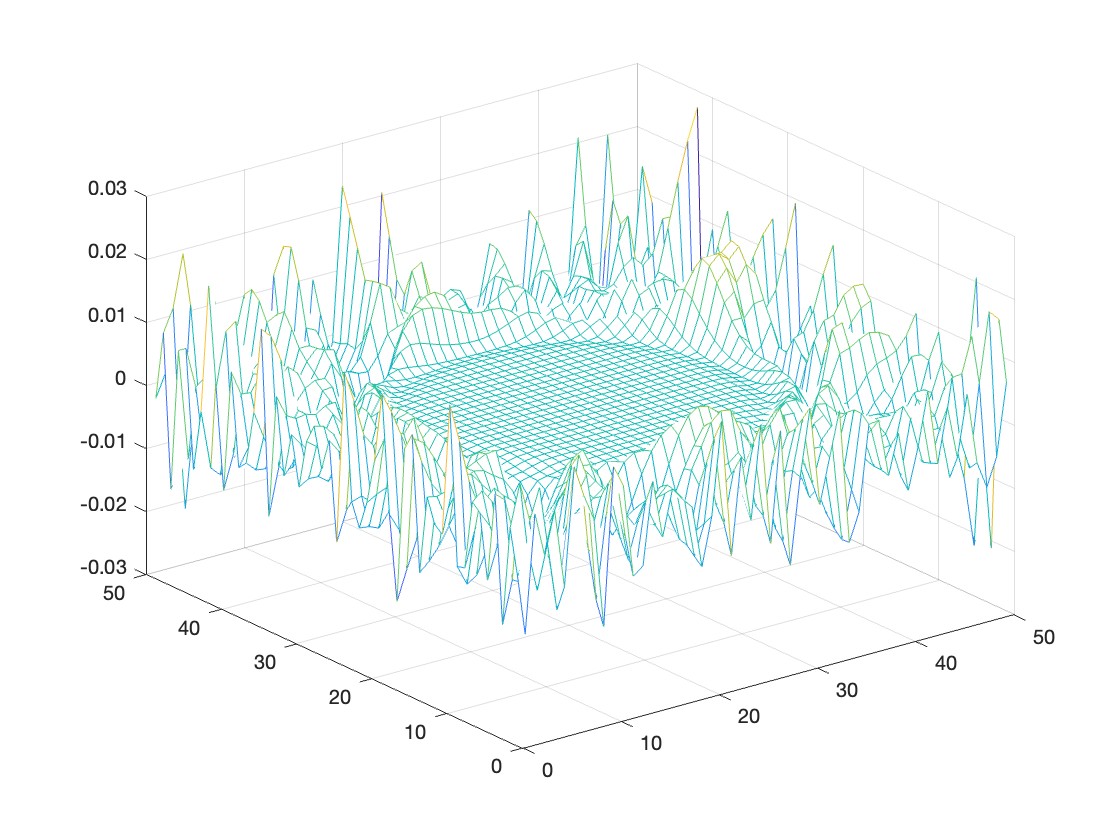}
        \caption{Error term $u_{\text{p}}-u^{\text{ext}}$ at fixed time $t_0 =\pi$}
    \end{subfigure}

    \caption{Empirical spatial distribution of error term for ring density function as in Section 6.1.}
    \label{fig: error distribution}
\end{figure}

\section{Data-driven deep neural network Fokker-Planck solver}
\subsection{Loss Function}
In the periodic case, we use the penalty method with penalty parameter 1 to convert the optimization problem in Equation (\ref{Eqn:optimization}) into the following unconstrained optimization problem
\begin{equation} \label{Eqn:new optimization}
\min _{\boldsymbol{u}}\|\boldsymbol{A} \boldsymbol{u}\|_2^2 + \|\boldsymbol{u}-\boldsymbol{v}\|_2^2,
\end{equation}
where $\boldsymbol{A}$ and $\boldsymbol{v}$ are the same as in Equation (\ref{Eqn:optimization}). As shown in \cite{zhai2022deep}, the new optimization problem (\ref{Eqn:new optimization}) has a similar effect to the original one in (\ref{Eqn:optimization}). 

Next, we consider a mesh-free version of the optimization problem (\ref{Eqn:new optimization}), in which the variable $u$ is represented by an artificial neural network approximation. Unlike the finite difference method, the periodic boundary condition cannot be naturally incorporated into a standard artificial neural network. Thus, we must enforce the periodic boundary condition by adding an additional loss term. Let $\tilde{u}(\boldsymbol{x}, \boldsymbol{\theta})$ be an approximation of $\boldsymbol{u}$ represented by an artificial neural network with parameters $\boldsymbol{\theta}$. Inspired by Equation (\ref{Eqn:new optimization}), we define the following error loss function, with a slight modification of the norm used in the periodic penalty term.

\begin{equation} \label{Eqn:loss function}
    \begin{aligned}
        L(\boldsymbol{\theta}) &= \frac{1}{N^X} \sum_{i=1}^{N^X}\left(\mathcal{L} \tilde{u}\left(\boldsymbol{x}_i, \boldsymbol{\theta}\right)\right)^2
        + \frac{1}{N^Y} \sum_{j=1}^{N^Y}\left(\tilde{u}\left(\boldsymbol{y}_j, \boldsymbol{\theta}\right)-v\left(\boldsymbol{y}_j\right)\right)^2 \\
        &\quad + \frac{1}{N^Z} \sum_{k=1}^{N^Z} \left| \tilde{u}\left(\boldsymbol{z}_k, t_1, \boldsymbol{\theta} \right) - \tilde{u}\left(\boldsymbol{z}_k, t_1+T, \boldsymbol{\theta} \right) \right| \\
        &:= L_1(\boldsymbol{\theta}) + L_2(\boldsymbol{\theta}) + L_3(\boldsymbol{\theta}),
    \end{aligned}
\end{equation}
with respect to $\boldsymbol{\theta}$, where $\boldsymbol{x}_i \in \mathbb{R}^n \times \mathbb{R}, i=1,2, \ldots, N^X$ and $\boldsymbol{y}_j \in \mathbb{R}^n \times \mathbb{R}, j=1,2, \ldots, N^Y$ are collocation points sampled from numerical domain $D$, and $v\left(\boldsymbol{y}_j\right)$ is the Monte Carlo approximation from a numerical simulation of (\ref{Eqn:SDE}) at $\boldsymbol{y}_j$. Similarly, $\boldsymbol{z}_k \in \mathbb{R}^n, k=1,2, \ldots, N^Z$, denotes the spatial collocation points used to enforce the periodic boundary condition. These points are paired with the temporal values $t_1$ and $t_1+T$, and are sampled from the spatial projection of the domain $D$. 

Let $\mathfrak{X}:=\left\{\boldsymbol{x}_i ; i=1,2, \ldots, N^X\right\}$, $\mathfrak{Y}:=\left\{\boldsymbol{y}_j ; j=1,2, \ldots, N^Y\right\}$ and $\mathfrak{Z}:=\left\{\left(\boldsymbol{z}_k, t\right) \in \mathbb{R}^n \times \mathbb{R}; \boldsymbol{z}_k \in \mathbb{R}^n, t \in\left\{t_1, t_1+T\right\}, k=1,2, \ldots, N^Z\right\}$ be three training sets consisting of collocation points. To distinguish them, we refer to $\mathfrak{X}$ as the "training set", $\mathfrak{Y}$ as the "reference set" and $\mathfrak{Z}$ as the "boundary condition training set".

This loss function (\ref{Eqn:loss function}) is in fact the Monte Carlo integration of the following functional
\begin{equation}
    J(u)=\|\mathcal{L} u\|_{L^2(D)}^2 + \|u-v\|_{L^2(D)}^2 + \|u(t)-u(t+T)\|_{L^1(D)},
\end{equation}
which can be seen as the continuous version of the discrete optimization problem (\ref{Eqn:new optimization}).

The loss function (\ref{Eqn:loss function}) has three parts. Our goal is to obtain the minimizer parameter $\boldsymbol{\theta}^{*}$ that minimizes the loss function. For the first part of the loss function, the minimization of $L_1(\boldsymbol{\theta})$ helps generate the parameters $\boldsymbol{\theta}^{*}$ that guide the neural network representation $\tilde{u}(\boldsymbol{x}, \boldsymbol{\theta}^*)$ to fit the Fokker-Planck differential equation $\mathcal{L} \tilde{u}=0$ empirically at the training points $\boldsymbol{x}_i, i=1,2, \ldots, N^X$. The second term $L_2(\boldsymbol{\theta})$ of the loss function serves as a reference for the solution. It is the low-accuracy Monte Carlo approximation that guides the neural network training process to converge to the desired solution, namely the one satisfying the periodic Fokker-Planck equation (\ref{Eqn:density function equation}), as introduced in Section 3. As for the third term of the loss function, minimizing $L_3(\boldsymbol{\theta})$ enforces the temporal periodic boundary condition on $\tilde{u}(\boldsymbol{x},\boldsymbol{\theta}^*)$ and yields the optimal parameter $\boldsymbol{\theta}^*$. We emphasize that the periodic error term $L_3(\boldsymbol{\theta})$ is defined using the $\ell_1$-norm rather than the squared $\ell_2$-norm. This choice reflects the importance of periodicity in the problem and provides stronger and sharper penalization than a squared error term. In practice, this choice yields better enforcement of the temporal periodic boundary condition and a more accurate approximation of periodic steady-state behavior.

The neural network approximation minimizes the residual of the differential operator over collocation points, learns the probability density function from the reference data points, and enforces the temporal periodicity through the boundary condition points. As discussed in \cite{li2020numerical}, we use the "Alternating Adam" method (described in \cite{Kingma2014AdamAM}) to train the neural network, which runs the Adam optimizer to minimize loss functions $L_1$, $L_2$ and $L_3$ and updates the parameter $\boldsymbol{\theta}$ separately at each training step. This is because the magnitudes of the three loss functions can differ significantly, whereas the Adam optimizer is relatively scaling free. This method avoids the trouble of rebalancing the weight of $L_1$, $L_2$ and $L_3$ during the neural network training. 



\begin{algorithm}[h]
\caption{Neural network training}
\KwIn{Training sets $ \mathfrak{X} $, $ \mathfrak{Y} $, and $ \mathfrak{Z} $}
\KwOut{Minimizer $ \boldsymbol{\theta}^* $ and $ \tilde{u}(\boldsymbol{x}, \boldsymbol{\theta}^*) $}
1: Initialize a neural network representation $ \tilde{u}(\boldsymbol{x}, \boldsymbol{\theta}) $ with undetermined parameters $ \boldsymbol{\theta} $\;
2: Run Monte Carlo simulation to get an approximate density $ v(\boldsymbol{y}_j) $ at each reference data point $ \boldsymbol{y}_j $, $ j = 1, 2, \dots, N^Y $\;
3: Pick a mini-batch in $ \mathfrak{X} $, compute the mean gradient of $ L_1 $, and use the mean gradient to update $ \boldsymbol{\theta} $\;
4: Pick a mini-batch in $ \mathfrak{Y} $, compute the mean gradient of $ L_2 $, and use the mean gradient to update $ \boldsymbol{\theta} $\;
5: Pick a mini-batch in $ \mathfrak{Z} $, compute the mean gradient of $ L_3 $, and use the mean gradient to update $ \boldsymbol{\theta} $\;
6: Repeat steps 3–5 until the losses of $ L_1 $, $ L_2 $, and $ L_3 $ are all sufficiently small\;
7: Return $ \boldsymbol{\theta}^* $ and $ \tilde{u}(\boldsymbol{x}, \boldsymbol{\theta}^*) $\;
\end{algorithm}

\subsection{Sampling collocation points and reference data}
For many stochastic dynamical systems, the invariant probability measure is concentrated near some small regions or low-dimensional manifolds, while the probability density function is close to zero farther away. Hence, samples of the collocation points in $\mathfrak{X}$ and $\mathfrak{Y}$ must effectively represent the concentration of the invariant probability density function. 

We adopt the method in \cite{zhai2022deep} to generate the collocation points and reference data. The idea is to capture both the high-density regions of the steady-state solution and the broader structure across the full domain \( D = [a_0, b_0] \times [a_1, b_1] \times [t_1, t_2] \).

First, we run a long numerical stochastic trajectory governed by the stochastic differential equation (\ref{Eqn:SDE}). To avoid bias from transient dynamics, we introduce an \emph{initial burn-in time} $s_0$, allowing the system to evolve for a sufficiently long time without recording any samples, so that it reaches a statistically representative regime before data collection begins. Correspondingly, we define the number of burn-in steps as $N_0 = \frac{s_0}{\Delta t}$, where \( \Delta t \) is the discrete time step used in the numerical simulation.

After the burn-in phase, we generate each collocation point $\boldsymbol{x}_j$ and $\boldsymbol{y}_j$ independently according to the following two-part strategy:

\begin{itemize}
    \item With probability \( \alpha \in (0,1) \), a point is sampled along the simulated trajectory. To introduce additional randomness and reduce correlations between successive samples, we evolve the system forward for a random amount of time before recording each point. Specifically, we draw a random evolution time \( T \sim \text{Uniform}(0, T_{\max}) \), where \( T_{\max} > 0 \) is a prescribed maximum evolution time. The system is then evolved forward over time \( T \). The resulting spatial-temporal position \( (x, y, t) \) is then recorded.
    
    \item With probability \( 1-\alpha \), a point is sampled \emph{uniformly} from domain \( D \) so that the network can learn small values from it.
\end{itemize}

Since the concentration part  preserves more information of the invariant distribution density, we usually set $\alpha = 0.5 \sim 0.9$. See Algorithm 2 for full details.\\

\begin{algorithm}[H]
\caption{Data collocation sampling}
\KwIn{Rate $\alpha \in [0.5, 0.9]$.}
\KwOut{Training collocation points $\boldsymbol{x}_i$, $i=1,2,\dots,N^X$ (or $\boldsymbol{y}_j$, $j=1,2,\dots,N^Y$).}

Initialize $\boldsymbol{X}_0$\;
Run a numerical trajectory of (\ref{Eqn:SDE}) until time $s_0$ (or after steps $N_0$) to ``burn in''\;
Choose a maximum evolution time $T_{max}$\;
\For{$i=1$ to $N^X$}{
    Generate a random number $c_i \sim U([0,1])$.\;
    \eIf{$c_i \leq \alpha$}{
        Generate a random evolution time $T_i \sim U(0,T_{max})$\;
        Run the numerical trajectory of (\ref{Eqn:SDE}) up to time $T_i$\;
        Let $\boldsymbol{x}_i = \boldsymbol{X}_{T_i}$\;
    }{
        Generate a random point $\boldsymbol{x}_i \sim U(D)$\;
    }
}
Return $\boldsymbol{x}_i$, $i=1,2,\dots,N^X$\;
\end{algorithm}
\vspace{1em}

As for set $\mathfrak{Z}:=\left\{\left(\boldsymbol{z}_k, t\right) \in \mathbb{R}^n \times \mathbb{R}; \boldsymbol{z}_k \in \mathbb{R}^n, t \in\left\{t_1, t_1+T\right\}, k=1,2, \ldots, N^Z\right\}$, to generate the spatial points $\boldsymbol{z}_k$, we uniformly sample from the spatial domain $[a_0, b_0] \times [a_1, b_1]$ within $D$. Specifically, for each $k = 1, \dotsc, N^Z$, we draw independent random variables for each spatial coordinate from a uniform distribution over the corresponding interval. The associated temporal coordinates are fixed at either $t_1$ or $t_1 + T$ depending on the enforcement of the periodic constraint.

\section{Estimation of the convergence speed}
In addition to numerically computing the periodic probability measure, we are also interested in estimating the speed of convergence of the law of $\boldsymbol{X}_t$, which satisfies the SDE (\ref{Eqn:SDE}), toward the periodic invariant distribution. In this chapter, we apply the coupling method to numerically compute the geometric ergodicity, i.e., the geometric convergence rate, especially in higher-dimensional spaces.

The main idea of the coupling method (also called maximal coupling in \cite{li2020numerical}) is to use the exponential tail of the coupling time distributions to numerically estimate the geometric convergence rate of a stochastic process.

\subsection{Geometric ergodicity and coupling of Markov processes}

Throughout this chapter, we consider the coupling of two periodic Markov processes, $\boldsymbol{X}_t$ and $\boldsymbol{Y}_t$
satisfying the same SDE (\ref{Eqn:SDE}), with the common transition probability ${P^{t}}$ and the unique periodic invariant probability distribution $\mu_t$. Then, similar to the coupling of measures (Definition \ref{def: coupling of measure}), the coupling of processes $(\boldsymbol{X}_t, \boldsymbol{Y}_t)$ also satisfies a coupling inequality.

\begin{theorem} \textbf{(Coupling Inequality for Markov Process)} \label{thm: coupling inequality for Markov process}
    Let $\nu_1$ and $\nu_2$ be the initial distributions of $\boldsymbol{X}_t$ and $\boldsymbol{Y}_t$, respectively. Then, for the coupling $(\boldsymbol{X}_t, \boldsymbol{Y}_t)$, we have the following \textit{coupling inequality}
\begin{equation} \label{eqn:coupling inequality for Markov process}
    \left\|\nu_1 P^t-\nu_2 P^t\right\|_{T V} \leq 2 \mathbb{P}\left[\tau_c>t\right].
\end{equation}
where total variance $\|\nu_1-\nu_2\|_{T V}=2 \sup _{A \in \mathcal{B}}|\nu_1(A)-\nu_2(A)|$.
\end{theorem}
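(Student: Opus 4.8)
The plan is to reduce the claim to the elementary measure-coupling inequality~\eqref{eqn: inequality of measure} together with the coalescence property of a process coupling. First I would fix a coupling $(\boldsymbol{\mathcal{X}}_t, \boldsymbol{\mathcal{Y}}_t)$ of $\boldsymbol{X}_t$ and $\boldsymbol{Y}_t$ (whose existence we take for granted, e.g.\ the synchronous/reflection/maximal constructions discussed later). By property (i) in the definition of a coupling of stochastic processes, $\boldsymbol{\mathcal{X}}_t$ has law $\nu_1 P^t$ and $\boldsymbol{\mathcal{Y}}_t$ has law $\nu_2 P^t$ for every $t$. Applying~\eqref{eqn: inequality of measure} with $X = \boldsymbol{\mathcal{X}}_t$ and $Y = \boldsymbol{\mathcal{Y}}_t$ then immediately gives
\[
\left\|\nu_1 P^t - \nu_2 P^t\right\|_{TV} \;\le\; 2\,\mathbb{P}\!\left[\boldsymbol{\mathcal{X}}_t \neq \boldsymbol{\mathcal{Y}}_t\right].
\]

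The second step is to bound $\mathbb{P}[\boldsymbol{\mathcal{X}}_t \neq \boldsymbol{\mathcal{Y}}_t]$ by $\mathbb{P}[\tau_c > t]$. Here I would use property (ii): on the event $\{\tau_c \le t\}$, the two processes have met at some time $s = \tau_c \le t$, i.e.\ $\boldsymbol{\mathcal{X}}_s = \boldsymbol{\mathcal{Y}}_s$, and coalescence forces $\boldsymbol{\mathcal{X}}_t = \boldsymbol{\mathcal{Y}}_t$. Contrapositively, $\{\boldsymbol{\mathcal{X}}_t \neq \boldsymbol{\mathcal{Y}}_t\} \subseteq \{\tau_c > t\}$, so $\mathbb{P}[\boldsymbol{\mathcal{X}}_t \neq \boldsymbol{\mathcal{Y}}_t] \le \mathbb{P}[\tau_c > t]$. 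Chaining this with the display above yields~\eqref{eqn:coupling inequality for Markov process}.

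The argument is essentially routine, so the only points requiring a little care are bookkeeping rather than substance: one should note that $\tau_c = \inf\{t \ge 0 : \boldsymbol{\mathcal{X}}_t = \boldsymbol{\mathcal{Y}}_t\}$ is a well-defined (and measurable) stopping time for the coupled filtration, so that the event $\{\tau_c > t\}$ makes sense, and one should check that the infimum is actually attained when it is finite (or, if sample-path continuity is not assumed, replace the meeting-time event by $\{\boldsymbol{\mathcal{X}}_s = \boldsymbol{\mathcal{Y}}_s \text{ for some } s \le t\}$, which is what property (ii) really uses). If anything is the ``hard part,'' it is simply making sure the coupling used is genuinely a coupling of Markov processes in the sense of the definition above — but since that is exactly the hypothesis, the proof is short.
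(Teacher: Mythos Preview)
Your proof is correct and follows exactly the route the paper indicates: it too derives~\eqref{eqn:coupling inequality for Markov process} directly from the measure-coupling inequality~\eqref{eqn: inequality of measure} combined with the coalescence property (ii) and the definition of $\tau_c$. The paper gives no further detail than that, so your write-up is in fact more complete.
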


The coupling inequality (\ref{eqn:coupling inequality for Markov process}) can be easily proved from the definition of \textbf{coupling time} $\tau_c:=\inf _{t \geq 0}\left\{\boldsymbol{X}_t=\boldsymbol{Y}_t\right\}$ and the coupling inequality for probability measures (\ref{eqn: inequality of measure}). A coupling $(\boldsymbol{X}_t, \boldsymbol{Y}_t)$ is said to be the \textit{optimal coupling} if the equality in (\ref{eqn:coupling inequality for Markov process}) holds for any $t > 0$. 

We can numerically estimate the rate of geometric ergodicity of $\boldsymbol{X}_t$ (or $\boldsymbol{Y}_t$) via Theorem (\ref{thm: coupling inequality for Markov process}). From (\ref{thm: coupling inequality for Markov process}) and Markov inequality, we have
\begin{equation}
    \label{eqn:tvnorm}
    \left\|\nu_1 P^t-\nu_2 P^t\right\|_{T V} \leq 2 \mathbb{P}\left[\tau_c>t\right] \leq 2 \mathbb{E}\left[e^{r_0 \tau_c}\right] e^{-r_0 t}.
\end{equation}

The finiteness of $\mathbb{E}\left[e^{r_0 \tau_c}\right]$ is sufficient to prove the geometric contraction/ergodicity. Note that since $\mu$ is the unique invariant probability measure of $X_t$, $X_t$ must be an irreducible Markov process with respect to $\mu$ in the sense of $\psi$-irreducibility. (We refer to Chapter 3 in \cite{li2020numerical} for details on the irreducibility of Markov processes.) Theoretical arguments (Lemma 2.2 - Lemma 2.4) in \cite{li2020numerical} ensure the finiteness of $\mathbb{E}\left[e^{r_0 \tau_c}\right]$ from one initial value can be extended to $\mu$-almost all initial values. Therefore, it is sufficient to show that there exists a pair of initial values $(x_0,y_0)$ satisfying $\mathbb{E}\left[e^{r_0 \tau_c}\right] < \infty$. However, the moment generating function $\mathbb{E}\left[e^{r_0 \tau_c}\right]$ is difficult to compute, especially when $r_0$ is close to the critical value $\sup \left\{r>0: \mathbb{E}\left[e^{r \tau_c}\right]<\infty\right\}$. The following lemma shows that instead of computing the moment generating function, we can turn to estimate the exponential tail of $\mathbb{P}\left[\tau_c>t\right]$.

\begin{lemma}
\label{lem52}
    For any initial distributions $\nu_1$ and $\nu_2$, assume that for $r_0>0$,
    \begin{equation} \label{eqn: exp tail}
        \limsup _{t \rightarrow \infty} \frac{1}{t} \log \mathbb{P}_{(\nu_1, \nu_2)}\left[\tau_c>t\right] \leq-r_0.
    \end{equation}
    Then for any $\varepsilon \in\left(0, r_0\right)$, it holds that
    \[
    \mathbb{E}_{(\nu_1, \nu_2)}\left[e^{\left(r_0-\varepsilon\right) \tau_c}\right]<\infty.
    \]
\end{lemma}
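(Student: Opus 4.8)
The plan is to derive the finiteness of the exponential moment directly from the decay hypothesis on the tail probability, using a standard layer-cake (tail-sum) representation of the expectation. First I would fix $\varepsilon \in (0, r_0)$ and set $r := r_0 - \varepsilon > 0$. The hypothesis \eqref{eqn: exp tail} says that $\limsup_{t\to\infty} \frac{1}{t}\log \mathbb{P}_{(\nu_1,\nu_2)}[\tau_c > t] \leq -r_0$, so by definition of $\limsup$ there exists $t_0 = t_0(\varepsilon) > 0$ such that for all $t \geq t_0$,
\[
\frac{1}{t}\log \mathbb{P}_{(\nu_1,\nu_2)}[\tau_c > t] \leq -r_0 + \tfrac{\varepsilon}{2},
\]
i.e. $\mathbb{P}_{(\nu_1,\nu_2)}[\tau_c > t] \leq e^{-(r_0 - \varepsilon/2) t}$ for $t \geq t_0$.

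Next I would write the exponential moment via the tail formula. For a nonnegative random variable $\tau_c$ and the increasing function $t \mapsto e^{rt}$, one has
\[
\mathbb{E}_{(\nu_1,\nu_2)}\!\left[e^{r\tau_c}\right] = 1 + r\int_0^\infty e^{rt}\,\mathbb{P}_{(\nu_1,\nu_2)}[\tau_c > t]\,\mathrm{d}t.
\]
I would split the integral at $t_0$: the contribution from $[0, t_0]$ is bounded by $r\int_0^{t_0} e^{rt}\,\mathrm{d}t = e^{rt_0} - 1 < \infty$, since the integrand is at most $e^{rt_0}$ on that finite interval. On $[t_0, \infty)$, using the tail bound above,
\[
r\int_{t_0}^\infty e^{rt}\,\mathbb{P}_{(\nu_1,\nu_2)}[\tau_c > t]\,\mathrm{d}t \leq r\int_{t_0}^\infty e^{(r_0-\varepsilon)t}\,e^{-(r_0-\varepsilon/2)t}\,\mathrm{d}t = r\int_{t_0}^\infty e^{-(\varepsilon/2)t}\,\mathrm{d}t = \frac{2r}{\varepsilon}\,e^{-(\varepsilon/2)t_0} < \infty.
\]
Here the key cancellation is that the exponent $r = r_0 - \varepsilon$ we are trying to control is strictly smaller than the decay rate $r_0 - \varepsilon/2$ guaranteed (eventually) by the hypothesis, leaving a net negative exponent $-\varepsilon/2$ and hence an integrable tail. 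Adding the two pieces and the constant $1$ gives $\mathbb{E}_{(\nu_1,\nu_2)}[e^{(r_0-\varepsilon)\tau_c}] < \infty$, as claimed.

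The argument has essentially no obstacle; the only point requiring a little care is the passage from the $\limsup$ hypothesis to a genuine uniform exponential bound valid for all large $t$ — one must not claim the bound $e^{-r_0 t}$ holds, only $e^{-(r_0-\varepsilon/2)t}$ for $t$ beyond a threshold depending on $\varepsilon$ — and then checking that the chosen slack $\varepsilon/2$ is consistent with the target exponent $r_0 - \varepsilon$, which it is since $r_0 - \varepsilon < r_0 - \varepsilon/2$. If one prefers to avoid the continuous-time tail integral, the same computation can be carried out with a discrete sum $\sum_{n \geq 0} e^{r(n+1)}\mathbb{P}[n \leq \tau_c < n+1] \leq \sum_{n\geq 0} e^{r(n+1)}\mathbb{P}[\tau_c > n]$ and the geometric series $\sum_{n \geq \lceil t_0\rceil} e^{-(\varepsilon/2) n}$, which converges for the same reason; I would present whichever version reads more cleanly.
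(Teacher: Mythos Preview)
Your proof is correct. The paper itself does not spell out a proof but simply states that it is identical to Lemma~2.5 in \cite{li2020numerical}; your tail-integral (layer-cake) argument is exactly the standard route for this kind of statement and is almost certainly what that reference contains.
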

The proof of Lemma \ref{lem52} is identical to that of Lemma 2.5 in \cite{li2020numerical}.

In summary, the existence of an exponential tail from one initial pair implies $\mathbb{E}_{\nu_1, \nu_2}\left[e^{(r_0 - \varepsilon) \tau_c}\right] < \infty$. The irreducibility ensures that $\mathbb{E}\left[e^{r_0 \tau_c}\right] < \infty$ holds for $\mu$-almost all initial values. Then the geometric ergodicity follows from Equation \eqref{eqn:tvnorm}. This gives the following proposition.

\begin{proposition} \label{prop: exp tail and rate}
    Let $(\boldsymbol{X}_t, \boldsymbol{Y}_t)$ be a coupling of periodic processes $\boldsymbol{X}_t$ and $\boldsymbol{Y}_t$ satisfying the Fokker-Planck equation (\ref{Eqn:FPE}).
    \begin{itemize}
        \item[(i)] Assume that there exists an initial pair $\left(\boldsymbol{x}_0, \boldsymbol{y}_0\right) \in \mathbb{R}^n \times \mathbb{R}^n$ at initial time $s>0$ and $r_0>0$ such that
        \begin{equation} \label{eqn: exp tail_contracting}
            \limsup _{t \rightarrow \infty} \frac{1}{t} \log \mathbb{P}_{\left(\boldsymbol{x}_0, \boldsymbol{y}_0\right)}\left[\tau_c>t\right] \leq-r_0.
        \end{equation}
        Then for any $\varepsilon \in\left(0, r_0\right), \boldsymbol{X}_t$ (or $\boldsymbol{Y}_t$) is geometrically contracting with rate $\left(r_0-\varepsilon\right)$;
        \item[(ii)] Assume that there exists $\boldsymbol{x}_0 \in \mathbb{R}^n$ at initial time $s$ and $r_0>0$ such that
        \begin{equation} \label{eqn: exp tail_ergodic}
            \limsup _{t \rightarrow \infty} \frac{1}{t} \log \mathbb{P}_{\left(\boldsymbol{x}_0, \mu_0 \right)}\left[\tau_c>t\right] \leq-r_0.
        \end{equation}
        Then for any $\varepsilon \in\left(0, r_0\right), \boldsymbol{X}_t$ (or $\boldsymbol{Y}_t$) is geometrically ergodic with rate $\left(r_0-\varepsilon\right)$.
    \end{itemize}
\end{proposition}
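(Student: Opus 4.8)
The plan is to chain together the three ingredients already assembled in this section. First, Lemma~\ref{lem52} upgrades the exponential-tail hypothesis into a finite exponential moment of $\tau_c$. Second, the $\psi$-irreducibility of the process — which is forced by the uniqueness of $\mu$ — together with the regeneration-type transfer arguments of \cite{li2020numerical} (Lemmas~2.2--2.4), propagates that finite moment from a single initial configuration to $\mu$-almost every initial configuration. Third, the coupling inequality \eqref{eqn:coupling inequality for Markov process} combined with Markov's inequality converts the finite exponential moment into the claimed geometric decay. To handle the time-periodicity cleanly I would pass to the space--time (skew-product) Markov process $\widehat{\boldsymbol X}_t := (\boldsymbol X_t,\, t \bmod T)$ on $\mathbb R^n \times (\mathbb R/T\mathbb Z)$, which is time-homogeneous and whose unique invariant probability measure is the lift of $\mu$; a coupling of $\boldsymbol X_t$ and $\boldsymbol Y_t$ sharing the time-clock is exactly a coupling of the two skew-product processes, with the same coupling time $\tau_c$, so the autonomous machinery of \cite{li2020numerical} applies verbatim to $\widehat{\boldsymbol X}_t$.

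\emph{Part (i).} Fix $\varepsilon \in (0, r_0)$. Taking $\nu_1 = \delta_{\boldsymbol x_0}$, $\nu_2 = \delta_{\boldsymbol y_0}$ in Lemma~\ref{lem52}, hypothesis \eqref{eqn: exp tail_contracting} gives $\mathbb E_{(\boldsymbol x_0,\boldsymbol y_0)}[e^{(r_0-\varepsilon)\tau_c}] < \infty$. Since $\mu$ is the unique (hence ergodic) periodic invariant measure, the coupled skew-product chain is $\psi$-irreducible, and the transfer arguments of \cite{li2020numerical} extend finiteness of $\mathbb E_{(\boldsymbol x,\boldsymbol y)}[e^{(r_0-\varepsilon)\tau_c}]$ from the single pair $(\boldsymbol x_0,\boldsymbol y_0)$ to $\mu_s\times\mu_s$-a.e.\ pair $(\boldsymbol x,\boldsymbol y)$. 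For any such pair, the coupling inequality (with initial time $s$) and Markov's inequality give, for $t>s$,
\[
\bigl\| P_{s,t}(\boldsymbol{x}, \cdot) - P_{s,t}(\boldsymbol{y}, \cdot) \bigr\|_{TV}
\;\le\; 2\, \mathbb{P}_{(\boldsymbol{x},\boldsymbol{y})}[\tau_c > t-s]
\;\le\; 2\, \mathbb{E}_{(\boldsymbol{x},\boldsymbol{y})}\bigl[ e^{(r_0-\varepsilon)\tau_c} \bigr]\, e^{-(r_0-\varepsilon)(t-s)} .
\]
Taking $\tfrac1t\log(\cdot)$ and letting $t\to\infty$ yields $\limsup_{t\to\infty}\tfrac1t\log\| P_{s,t}(\boldsymbol x,\cdot)-P_{s,t}(\boldsymbol y,\cdot)\|_{TV} \le -(r_0-\varepsilon)$, i.e.\ geometric contraction with rate $r_0-\varepsilon$.

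\emph{Part (ii).} Repeat the argument with $\nu_2 = \mu_s$. By invariance, $\boldsymbol Y_s \sim \mu_s$ implies $\boldsymbol Y_t \sim \mu_t$ for all $t\ge s$, so the coupling inequality becomes $\| P_{s,t}(\boldsymbol x,\cdot) - \mu_t\|_{TV} \le 2\,\mathbb P[\tau_c > t-s]$. Lemma~\ref{lem52} applied to $(\delta_{\boldsymbol x_0},\mu_s)$ gives $\mathbb E_{(\boldsymbol x_0,\mu_0)}[e^{(r_0-\varepsilon)\tau_c}] < \infty$ from \eqref{eqn: exp tail_ergodic}, and the same irreducibility/transfer step extends this to $\mu_s$-a.e.\ $\boldsymbol x$; the Markov-inequality estimate then gives $\limsup_{t\to\infty}\tfrac1t\log\| P_{s,t}(\boldsymbol x,\cdot)-\mu_t\|_{TV}\le -(r_0-\varepsilon)$, which is geometric ergodicity with rate $r_0-\varepsilon$.

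The main obstacle is the propagation step: showing rigorously that a single finite exponential moment of $\tau_c$ implies the same for $\mu$-a.e.\ starting configuration. This is where one must verify that passing to $\widehat{\boldsymbol X}_t$ preserves everything Lemmas~2.2--2.4 of \cite{li2020numerical} require — in particular that $\widehat{\boldsymbol X}_t$ remains $\psi$-irreducible and possesses petite (small) sets — which uses the standing uniqueness of the periodic invariant measure together with the assumed convergence $P_{s,t}(\boldsymbol x,\cdot)\to\mu_t$. A secondary point worth flagging is that the displayed estimates deliver only the upper bound $\limsup \le -(r_0-\varepsilon)$; this is exactly what ``geometrically contracting/ergodic with rate $r_0-\varepsilon$'' means for the present purpose, and no matching lower bound is claimed or needed.
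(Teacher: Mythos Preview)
Your proposal is correct and follows essentially the same three-step route the paper uses: Lemma~\ref{lem52} to pass from the exponential tail to a finite exponential moment, the irreducibility/transfer Lemmas~2.2--2.4 of \cite{li2020numerical} to propagate this from one initial configuration to $\mu$-almost all of them, and then the coupling--Markov inequality \eqref{eqn:tvnorm} to conclude. The only real addition on your side is the explicit skew-product embedding $\widehat{\boldsymbol X}_t=(\boldsymbol X_t,\,t\bmod T)$ to reduce to the time-homogeneous setting; the paper leaves this implicit and simply invokes the autonomous lemmas of \cite{li2020numerical}, so your version is a slightly more careful packaging of the same argument.
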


\subsection{Description of algorithm}
The main idea of the algorithm is to use the exponential tail of coupling time distributions to numerically estimate the geometric ergodicity of the periodic process $\mathbf{X}_t$. Unlike time-homogeneous stochastic differential equation, the coupling time of time-periodic process $X_t$ usually gives 
\begin{equation} \label{eqn: tail analysis}
    \mathbb{P}_{\boldsymbol{x}_0, \boldsymbol{y}_0}\left[\tau_c>t\right] \approx C(t) e^{-r t}, \quad \forall t \gg 1 
\end{equation} 
instead of
$$
\mathbb{P}_{\boldsymbol{x}_0, \boldsymbol{y}_0}\left[\tau_c>t\right] \approx C(\boldsymbol{x}_0, \boldsymbol{y}_0) e^{-r t}, \quad \forall t \gg 1 \,.
$$
This can be justified by the following lemma.
\begin{lemma}
    Let $P_{s,t}(\boldsymbol{y}, \boldsymbol{x})$ be the time-periodic transition kernel starting from $\boldsymbol{y}$ at time $s$ and ending at $\boldsymbol{x}$ at time $t$, satisfying $P_{s+T,\,t+T}(\boldsymbol{y}, \boldsymbol{x})=P_{s,t}(\boldsymbol{y}, \boldsymbol{x})$. Assume the time-$T$ transition kernel $P_{0, T}(\boldsymbol{y}, \boldsymbol{x})$ has a spectral decomposition
    $$
        P_{0, T}(\boldsymbol{y}, \boldsymbol{x}) = \sum_{i = 0}^\infty p_i(\boldsymbol{x})q_i(\boldsymbol{y})\lambda_i 
    $$
    for a sequence of bi-orthonormal eigenfunction pairs $\{(p_i, q_i)\}$. The family $\{(p_i, q_i)\}$ satisfies $\int_{\mathbb{R}^n}p_i(\boldsymbol{z})\,q_j(\boldsymbol{z}) \mathrm{d}\boldsymbol{z} = \delta_{ij}$ and is complete in the sense that $\sum_{i=0}^{\infty}p_i(\boldsymbol{x})\,q_i(\boldsymbol{y})=\delta(\boldsymbol{x}-\boldsymbol{y})$. Then there exists a $T$-periodic kernel $R(\boldsymbol{x}, \boldsymbol{y}, t)$ satisfying
    \begin{equation}
        \label{floquet}
        P_{0, t}(\boldsymbol{y}, \boldsymbol{x}) = \sum_{i = 0}^\infty \left (\int_{\mathbb{R}^n} R(\boldsymbol{x}, \boldsymbol{z}, t) \, p_i( \boldsymbol{z}) \mathrm{d} \boldsymbol{z} \right )q_i(\boldsymbol{y})e^{\mu_i t} \,,
    \end{equation}
    where $\mu_i = \frac{1}{T}\ln \lambda_i$. 
\end{lemma}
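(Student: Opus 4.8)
The plan is to establish a Floquet--Lyapunov factorization of the propagator $P_{0,t}$, with the time-$T$ kernel $P_{0,T}$ playing the role of the monodromy (period) map. Heuristically one expects $P_{0,t}=\mathcal{R}(t)\circ e^{tG}$, where $e^{TG}$ equals the monodromy (so that $G$ has eigenfunctions $p_i$ with eigenvalues $\mu_i=\frac1T\ln\lambda_i$) and $\mathcal R(\cdot)$ is a $T$-periodic family of operators with $\mathcal R(0)=\mathrm{Id}$; the kernel of $\mathcal R(t)$ is the sought-after $R(\boldsymbol x,\boldsymbol z,t)$. Since the spectral decomposition of $P_{0,T}$ and the completeness relation $\sum_i p_i(\boldsymbol x)q_i(\boldsymbol y)=\delta(\boldsymbol x-\boldsymbol y)$ are assumed, I would carry this out directly at the level of kernels rather than through functional calculus.

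First, define for each $i$ the Floquet-normalized coefficient $h_i(\boldsymbol x,t):=e^{-\mu_i t}\int_{\mathbb R^n} P_{0,t}(\boldsymbol y,\boldsymbol x)\,p_i(\boldsymbol y)\,\mathrm d\boldsymbol y$, which will serve as the time-dependent eigenfunction paired with $q_i(\boldsymbol y)e^{\mu_i t}$. Expanding the function $\boldsymbol y\mapsto P_{0,t}(\boldsymbol y,\boldsymbol x)$ (for fixed $\boldsymbol x$) in the system $\{q_i\}$ by means of the completeness relation, and reading off the coefficients against $p_i$ via bi-orthonormality, gives $P_{0,t}(\boldsymbol y,\boldsymbol x)=\sum_i h_i(\boldsymbol x,t)\,q_i(\boldsymbol y)\,e^{\mu_i t}$.

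The key step is that each $h_i$ is $T$-periodic in $t$. For this I would apply the Chapman--Kolmogorov identity at the intermediate time $T$ together with the time-periodicity $P_{T,\,t+T}=P_{0,t}$ to obtain $P_{0,\,t+T}(\boldsymbol y,\boldsymbol x)=\int_{\mathbb R^n} P_{0,T}(\boldsymbol y,\boldsymbol z)\,P_{0,t}(\boldsymbol z,\boldsymbol x)\,\mathrm d\boldsymbol z$, and observe that the given decomposition of $P_{0,T}$ yields the eigen-relation $\int_{\mathbb R^n} P_{0,T}(\boldsymbol y,\boldsymbol z)\,p_i(\boldsymbol y)\,\mathrm d\boldsymbol y=\lambda_i\,p_i(\boldsymbol z)$. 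Plugging this into the definition of $h_i(\boldsymbol x,t+T)$ and using $e^{\mu_i T}=\lambda_i$ collapses the extra factor: $h_i(\boldsymbol x,t+T)=e^{-\mu_i T}\lambda_i\,h_i(\boldsymbol x,t)=h_i(\boldsymbol x,t)$. Finally, set $R(\boldsymbol x,\boldsymbol z,t):=\sum_i h_i(\boldsymbol x,t)\,q_i(\boldsymbol z)$; bi-orthonormality gives $\int_{\mathbb R^n} R(\boldsymbol x,\boldsymbol z,t)\,p_i(\boldsymbol z)\,\mathrm d\boldsymbol z=h_i(\boldsymbol x,t)$, the $T$-periodicity of $R$ in $t$ is inherited from that of the $h_i$, and substituting into the expansion of $P_{0,t}$ produces exactly \eqref{floquet}.

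I expect the main obstacle to be analytic rather than algebraic: justifying the interchange of the infinite sums with the integrals, ensuring the series defining $h_i$ and $R$ converge in an appropriate function space, and fixing the branch of $\mu_i=\frac1T\ln\lambda_i$ consistently when the $\lambda_i$ are complex (normalizing so that $\lambda_0=1\mapsto\mu_0=0$). Because the statement itself is phrased formally through the $\delta$-kernel completeness relation, I would present the derivation at the same formal level and add a remark that, under a spectral gap together with a trace-class (or Hilbert--Schmidt) assumption on the monodromy $P_{0,T}$, all of these manipulations are legitimate and $R$ is a genuine integral kernel.
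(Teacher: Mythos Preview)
Your proposal is correct and is essentially the same argument as the paper's, just organized mode--by--mode rather than through an auxiliary kernel. The paper introduces $Q(\boldsymbol{x},t;\boldsymbol{y},0):=\sum_i p_i(\boldsymbol{x})q_i(\boldsymbol{y})e^{\mu_i t}$, verifies its semigroup property and that $Q(\cdot,T;\cdot,0)=P_{0,T}$, then sets $R(\boldsymbol{x},\boldsymbol{y},t):=\int P_{0,t}(\boldsymbol{z},\boldsymbol{x})\,Q(\boldsymbol{z},-t;\boldsymbol{y},0)\,\mathrm d\boldsymbol{z}$ and proves $T$-periodicity of $R$ and the factorization $P=\int R\,Q$ by kernel manipulations. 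Your coefficients $h_i(\boldsymbol{x},t)$ are exactly the $i$th spectral components of this same $R$: a direct computation gives $\sum_i h_i(\boldsymbol{x},t)q_i(\boldsymbol{z})=\int P_{0,t}(\boldsymbol{w},\boldsymbol{x})\,Q(\boldsymbol{w},-t;\boldsymbol{z},0)\,\mathrm d\boldsymbol{w}$, so the two constructions of $R$ coincide. Your periodicity step (Chapman--Kolmogorov at time $T$, the periodicity $P_{T,t+T}=P_{0,t}$, and the eigen-relation $\int P_{0,T}(\boldsymbol{y},\boldsymbol{z})p_i(\boldsymbol{y})\,\mathrm d\boldsymbol{y}=\lambda_i p_i(\boldsymbol{z})$) is precisely the spectral unpacking of the paper's kernel identity for $R(\cdot,\cdot,t+T)$. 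The paper keeps the derivation at the same formal level you anticipate, without addressing convergence or branch choices for $\mu_i$, so your caveats go beyond what is presented there.
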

\begin{proof}
For consistency, we denote $P_{s,t}(\boldsymbol{y}, \boldsymbol{x})$ as $P(\boldsymbol{x},t;\boldsymbol{y},s)$ in this proof.

First, we consider
\begin{equation}
\label{eqn: Q def}
    Q(\boldsymbol{x},t;\boldsymbol{y},s) := \sum_{i=0}^{\infty}p_i(\boldsymbol{x})\,q_i(\boldsymbol{y})\,e^{(t-s)\mu_i},
\end{equation}
where $\mu_i = \frac{1}{T}\ln \lambda_i$. 
We first show that $Q$ has semi-group property, i.e. for any $t,s\geq0$,
\begin{equation}
    Q(\boldsymbol{x},t+s;\boldsymbol{y},0) = \int_{\mathbb{R}^n}Q(\boldsymbol{x},t;\boldsymbol{z},0)\,Q(\boldsymbol{z},s;\boldsymbol{y},0)\mathrm{d}\boldsymbol{z}.
\end{equation}
From the definition (\ref{eqn: Q def}) of $Q$, we have
\begin{equation*}
    \begin{aligned}
        \int_{\mathbb{R}^n}Q(\boldsymbol{x},t;\boldsymbol{z},0)\,Q(\boldsymbol{z},s;\boldsymbol{y},0)\mathrm{d}\boldsymbol{z} 
        &=\int_{\mathbb{R}^n}\left(\sum_{i=0}^{\infty}p_i(\boldsymbol{x})\,q_i(\boldsymbol{z})\,e^{t\mu_i}\right) \, \left(\sum_{j=0}^{\infty}p_j(\boldsymbol{z})\,q_j(\boldsymbol{y})\,e^{s\mu_j}\right)\mathrm{d}\boldsymbol{z}\\
        &=\int_{\mathbb{R}^n}\sum_{i,j=0}^{\infty}p_i(\boldsymbol{x})\,q_i(\boldsymbol{z})\,p_j(\boldsymbol{z})\,q_j(\boldsymbol{y})\,e^{t\mu_i+s\mu_j}\mathrm{d}\boldsymbol{z}\\
        &=\sum_{i,j=0}^{\infty}p_i(\boldsymbol{x})\,q_j(\boldsymbol{y})\,e^{t\mu_i+s\mu_j}\int_{\mathbb{R}^n} q_i(\boldsymbol{z})\,p_j(\boldsymbol{z}) \mathrm{d}\boldsymbol{z}\\
        &=\sum_{i=0}^{\infty}p_i(\boldsymbol{x})\,q_i(\boldsymbol{y})\,e^{(t+s)\mu_i}\\
        &=Q(\boldsymbol{x},t+s;\boldsymbol{y},0)\,.\\
    \end{aligned}
\end{equation*}
Also, at time $T$, we have
\begin{equation}
    Q(\boldsymbol{x},T;\boldsymbol{y},0) = P(\boldsymbol{x},T;\boldsymbol{y},0)\,.
\end{equation}
This can also be obtained from the definition,
\begin{equation*}
    \begin{aligned}
        Q(\boldsymbol{x},T;\boldsymbol{y},0) 
        &= \sum_{i=0}^{\infty}p_i(\boldsymbol{x})\,q_i(\boldsymbol{y})\,e^{T\mu_i}\\
        &= \sum_{i=0}^{\infty}p_i(\boldsymbol{x})\,q_i(\boldsymbol{y})\,e^{T(\frac{1}{T}\ln \lambda_i)}\\
        &= \sum_{i=0}^{\infty}p_i(\boldsymbol{x})\,q_i(\boldsymbol{y})\,\lambda_i\\
        &= P(\boldsymbol{x},T;\boldsymbol{y},0)\,.
    \end{aligned}
\end{equation*}

Second, let
\begin{equation}
\label{eqn: R def}
    R(\boldsymbol{x},\boldsymbol{y},t) := \int_{\mathbb{R}^n}P(\boldsymbol{x},t;\boldsymbol{z},0) \, Q(\boldsymbol{z},-t;\boldsymbol{y},0) \,\mathrm{d}\boldsymbol{z}, \quad t\geq 0\,.
\end{equation}
We claim that $R$ is $T$-periodic in $t$:
\begin{equation}
\label{eqn: R is T-periodic}
    R(\boldsymbol{x},\boldsymbol{y},t+T) = R(\boldsymbol{x},\boldsymbol{y},t).
\end{equation}
Using the Chapman-Kolmogorov equation, the $T$-periodic property of the transition kernel $P(\boldsymbol{x},t;\boldsymbol{y},0)$, and definitions (\ref{eqn: Q def}) and (\ref{eqn: R def}), we can prove the $T$-periodic property of $R$ as follows
\begin{equation}
\label{eqn: R periodic proof}
    \begin{aligned}
        R(\boldsymbol{x},\boldsymbol{y},t+T)
        &= \int_{\mathbb{R}^n} P(\boldsymbol{x},t+T;\boldsymbol{z},0) \, Q(\boldsymbol{z},-(t+T);\boldsymbol{y},0) \,\mathrm{d}\boldsymbol{z}\\
        &= \int_{\mathbb{R}^n} \left( \int_{\mathbb{R}^n} P(\boldsymbol{x},t+T;\boldsymbol{w},T) \, P(\boldsymbol{w},T;\boldsymbol{z},0) \,\mathrm{d}\boldsymbol{w} \right) \,Q(\boldsymbol{z},-(t+T);\boldsymbol{y},0) \,\mathrm{d}\boldsymbol{z}\\
        &= \iint_{\mathbb{R}^n \times \mathbb{R}^n} P(\boldsymbol{x},t;\boldsymbol{w},0) \, P(\boldsymbol{w},T;\boldsymbol{z},0) \, Q(\boldsymbol{z},-t-T;\boldsymbol{y},0) \,\mathrm{d}\boldsymbol{w}\,\mathrm{d}\boldsymbol{z}\\
        &= \int_{\mathbb{R}^n} P(\boldsymbol{x},t;\boldsymbol{w},0)\left(\int_{\mathbb{R}^n}  P(\boldsymbol{w},T;\boldsymbol{z},0) \, Q(\boldsymbol{z},-t-T;\boldsymbol{y},0)\,\mathrm{d}\boldsymbol{z}\right) \,\mathrm{d}\boldsymbol{w}\,. \\
    \end{aligned}
\end{equation}
We consider the $\boldsymbol{z}$-integral in the right hand side of the last line in equation (\ref{eqn: R periodic proof}). From the properties and definition of $Q$,
\begin{equation*}
\begin{aligned}
    &\quad \;\;\int_{\mathbb{R}^n}  P(\boldsymbol{w},T;\boldsymbol{z},0) \, Q(\boldsymbol{z},-t-T;\boldsymbol{y},0)\,\mathrm{d}\boldsymbol{z}\\
    &= \int_{\mathbb{R}^n}  Q(\boldsymbol{w},T;\boldsymbol{z},0) \, Q(\boldsymbol{z},-t-T;\boldsymbol{y},0)\,\mathrm{d}\boldsymbol{z}\\
    &= \int_{\mathbb{R}^n} \left(\sum_{i=0}^{\infty} p_i(\boldsymbol{w})\,q_i(\boldsymbol{z})\,e^{T\mu_i}\right)
    \left( \sum_{j=0}^{\infty} p_j(\boldsymbol{z})\,q_j(\boldsymbol{y})\,e^{(-t-T)\mu_j}\right)\,\mathrm{d}\boldsymbol{z}\\
    &= Q(\boldsymbol{w},-t;\boldsymbol{y},0)\,.\\
\end{aligned}
\end{equation*}
Then, equation (\ref{eqn: R periodic proof}) becomes
\begin{equation*}
    R(\boldsymbol{x},\boldsymbol{y},t+T) = \int_{\mathbb{R}^n} P(\boldsymbol{x},t;\boldsymbol{w},0) \, Q(\boldsymbol{w},-t;\boldsymbol{y},0) \,\mathrm{d}\boldsymbol{w} = R(\boldsymbol{x},\boldsymbol{y},t)\,.
\end{equation*}

Next, we claim that for all $t \geq 0$,
\begin{equation}
\label{eqn: P=RQ}
    P(\boldsymbol{x},t;\boldsymbol{y},0) = \int_{\mathbb{R}^n} R(\boldsymbol{x},\boldsymbol{z},t) \, Q(\boldsymbol{z},t;\boldsymbol{y},0) \, \mathrm{d}\boldsymbol{z} \,.
\end{equation}
Substituting the definition (\ref{eqn: R def}) of $R(\boldsymbol{x},\boldsymbol{z},t)$ into equation (\ref{eqn: P=RQ}), we get the right hand side of equation (\ref{eqn: P=RQ})
\begin{equation}
\begin{aligned}
    \text{RHS}
    &= \int_{\mathbb{R}^n} \left( \int_{\mathbb{R}^n} P(\boldsymbol{x},t;\boldsymbol{w},0) \, Q(\boldsymbol{w},-t;\boldsymbol{z},0) \,\mathrm{d}\boldsymbol{w} \right)\, Q(\boldsymbol{z},t;\boldsymbol{y},0) \, \mathrm{d}\boldsymbol{z}\\
    &= \int_{\mathbb{R}^n} P(\boldsymbol{x},t;\boldsymbol{w},0)\left( \int_{\mathbb{R}^n} Q(\boldsymbol{w},-t;\boldsymbol{z},0)\, Q(\boldsymbol{z},t;\boldsymbol{y},0)\mathrm{d}\boldsymbol{z} \right) \, \mathrm{d}\boldsymbol{w}\\
    &= \int_{\mathbb{R}^n} P(\boldsymbol{x},t;\boldsymbol{w},0)\left( \sum_{i=0}^{\infty} p_i(\boldsymbol{w})\,q_i(\boldsymbol{y}) \right) \, \mathrm{d}\boldsymbol{w}\\
    &= \int_{\mathbb{R}^n} P(\boldsymbol{x},t;\boldsymbol{w},0) \, \delta(\boldsymbol{w}-\boldsymbol{y}) \,\mathrm{d}\boldsymbol{w}\\
    &= P(\boldsymbol{x},t;\boldsymbol{y},0) = \text{LHS}
\end{aligned}
\end{equation}
Finally, from the definition of $Q$, we have
\begin{equation*}
\begin{aligned}
    P(\boldsymbol{x},t;\boldsymbol{y},0) 
    &= \int_{\mathbb{R}^n} R(\boldsymbol{x},\boldsymbol{z},t) \, Q(\boldsymbol{z},t;\boldsymbol{y},0) \, \mathrm{d}\boldsymbol{z}\\
    &= \int_{\mathbb{R}^n} R(\boldsymbol{x},\boldsymbol{z},t) \, \left( \sum_{i=0}^{\infty}p_i(\boldsymbol{z})\,q_i(\boldsymbol{y})\,e^{\mu_i t}\right) \, \mathrm{d}\boldsymbol{z}\\
    &= \sum_{i = 0}^\infty \left (\int_{\mathbb{R}^n} R(\boldsymbol{x}, \boldsymbol{z}, t) \, p_i( \boldsymbol{z}) \mathrm{d} \boldsymbol{z} \right )q_i(\boldsymbol{y})e^{\mu_i t}\\
\end{aligned}
\end{equation*}

\end{proof}

Therefore, if the coupling is close to optimal, the coupling time distribution also has a $T$-periodic pre-factor. As a result, we need a different approach from \cite{li2020numerical} to estimate the coupling time distribution. When $t\rightarrow \infty$, it follows from Proposition (\ref{prop: exp tail and rate}) that for almost every pair of initial values $(\boldsymbol{x},\boldsymbol{y})$,
\[
\limsup _{t \rightarrow \infty} \frac{1}{t} \log \left(\left\|P^t(\boldsymbol{x}, \cdot)-P^t(\boldsymbol{y}, \cdot)\right\|_{T V}\right)<-r.
\]
The numerical verification of geometric ergodicity can also be carried out by replacing $\boldsymbol{y}$ with a sample from the periodic invariant distribution $\mu_t$.

\subsubsection{Estimation of $r$ and $C(t)$}
Taking logarithms on both sides of (\ref{eqn: tail analysis}) yields:
\[
\log \mathbb{P}\left[\tau_c>t\right] \approx \log C(t)-r t
\]
This expression motivates the decomposition of $\log \mathbb{P}\left[\tau_c > t\right]$ into a linear term and a bounded oscillating component. We describe below the methodology for estimating both the convergence rate $r$ and the periodic function $C(t)$.

First, to estimate the convergence rate $r$, we exploit the periodicity of $C(t)$ and consider multiple sequences of time samples spaced by the period $T$. Let $k$ be a fixed integer denoting the number of periods (typically $k = 5 \sim 8$). For each initial offset $t_i \in [0, T)$, $i=1,2,...,N$ for some large $N$, we construct a sequence:
\[
t_i, t_i+T, t_i+2 T, \ldots, t_i+k T .
\]
For each such time sequence, the values $\log C\left(t_i+j T\right)$ for $j=0, \ldots, k$ are all equal becasue of the periodic condition $C(t+T)=C(t)$. Therefore, we can regard the $\log C(t)$ term as a constant, and perform a linear regression for each such sequence $\left\{t_i+j T\right\}_{j=0}^k$ to obtain the local slope $-r_i$. After fitting all such sequences starting at different offsets $t_i \in[0, T)$, we average the resulting slopes to obtain the approximation value of $-r$.

Next, we estimate the oscillating term $\log C(t)$. With the convergence rate $r$ estimated, we define the residual
\[
\log C(t) = \log \mathbb{P}\left[\tau_c > t\right] + r t.
\]
This residual isolates the periodic component. To approximate $\log C(t)$, we perform a truncated Fourier expansion
\[
\log C(t) \approx a_0 + \sum_{j=1}^{k} \left( a_j \sin(j \omega t) + b_j \cos(j \omega t) \right) ,
\]
where $\omega = 2\pi/T$ is the fundamental frequency and $k$ is the number of periods as introduced before. The coefficients $a_j, b_j$, $j=1,2,...,k$ can be determined using least-squares regression. 

After obtaining the convergence rate $r$ and oscillating term $\log C(t)$, we compare the result with the $95\%$ confidence interval of $\log \mathbb{P}\left[\tau_c>t\right]$. Figure \ref{fig: tail_analysis} shows the estimation result for the coupling time distribution of the following SDE
\begin{equation}
\label{eqn: ring SDE}
\left\{\begin{array}{l}
d X_t=\left(-4X_t \,(X_t^2+Y_t^2-(1+0.5\sin t))\right)+Y_t \,\mathrm{d} t+\varepsilon_1 \, \mathrm{d} W_t^x \\
d Y_t=\left(-4Y_t \,(X_t^2+Y_t^2-(1+0.5\sin t))\right)-X_t \, \mathrm{d} t+\varepsilon_2 \, \mathrm{d} W_t^y
\end{array}\right. \, ,
\end{equation}
with $\varepsilon_1 = \varepsilon_2 = 1$. In Figure \ref{fig: tail_analysis}, we run Monte Carlo estimation with $10^6$ samples and time step size $h=0.001$, using the mix of independent coupling method and maximal coupling method to get the distribution of coupling time $\mathbb{P}\left[\tau_c>t\right]$. These coupling methods are introduced in the next subsection. The number of $2\pi$-periods is $k=6$. We construct $N=200$ time sequences $\{t_i, t_i+2\pi, \ldots,t_i+6\times2\pi\}$, $i=1,2,\ldots,N=200$, to get the estimation of $-r = \frac{1}{N}\sum_{i=1}^{N} (-r_i)$ and then the $2\pi$-periodic oscillating term $\log C(t)$ with $\omega = 1$ in the Fourier expansion. The purple line represents the approximated curve $\log C(t)-rt$. The estimated value of slope $-r = -0.21455$. The yellow line and red line are respectively the lower confidence bound and upper confidence bound in $95\%$ confidence level. 

\begin{figure}[htbp]
    \centering
    \includegraphics[width=1.0\textwidth]{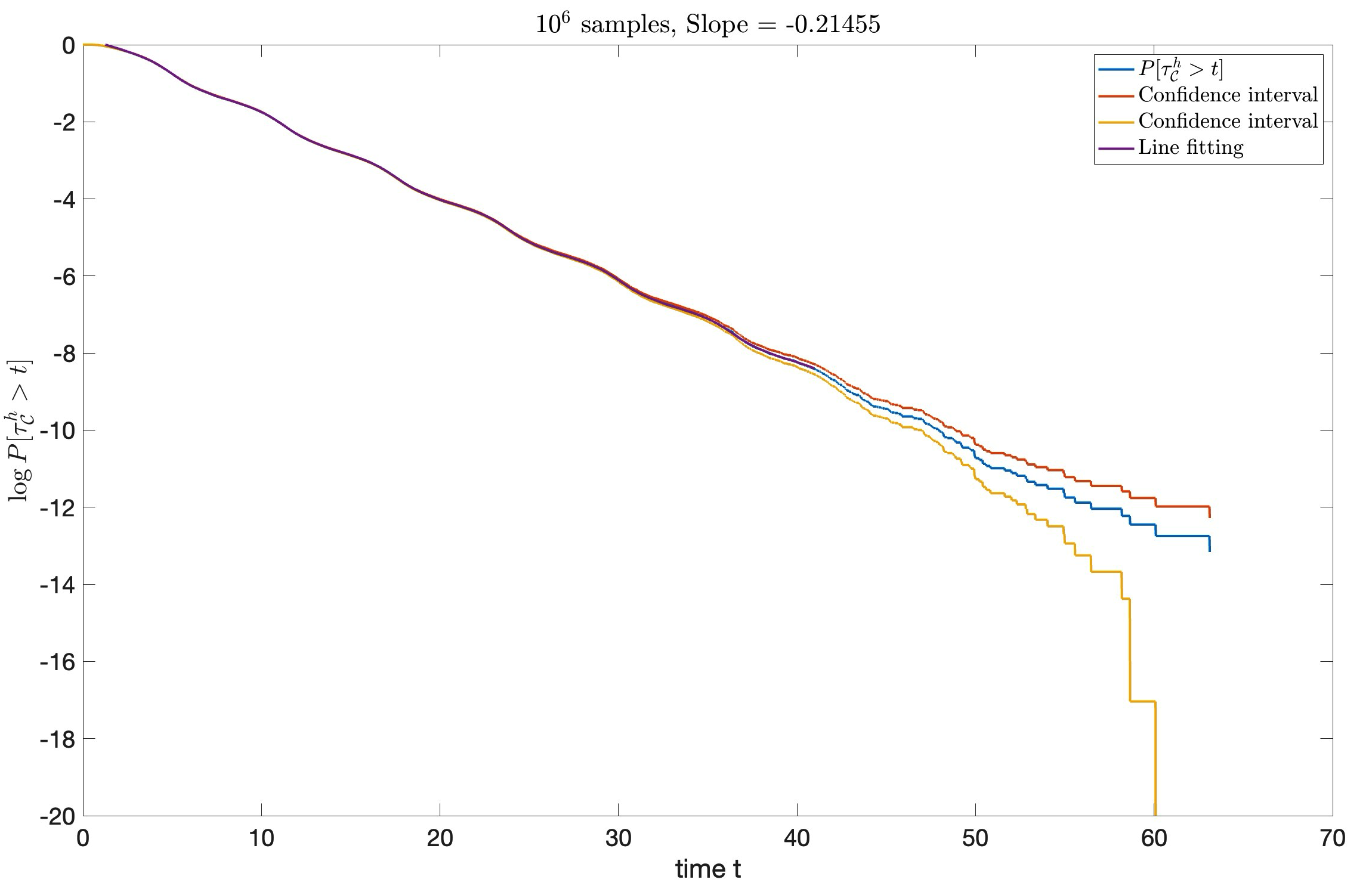}
    \caption{Curve approximation of $\log \mathbb{P}[\tau_c > t] \approx \log C(t) -rt$ for SDE in equation (\ref{eqn: ring SDE}) in $95\%$ confidence level with $10^6$ samples and time step size $h=0.001$.}
    \label{fig: tail_analysis}
\end{figure}

\subsubsection{Coupling Method}
It remains to discuss how the coupling time distribution $\mathbb{P}\left[\tau_c > t\right]$ is estimated. This is done by running two coupled numerical trajectories of Equation \eqref{Eqn:SDE} starting from $\boldsymbol{x}_0$ and $\boldsymbol{y}_0$ respectively. The numerical coupling strategies are introduced in \cite{li2020numerical}. We briefly review them here for the sake of completeness of the paper. Take the following two processes $\boldsymbol{X}_t$ and $\boldsymbol{Y}_t$ as an example:
\[
\left(\boldsymbol{X}_{n+1}, \boldsymbol{Y}_{n+1}\right)=\left(f\left(\boldsymbol{X}_n\right)+\zeta_n^1,\, f\left(\boldsymbol{Y}_n\right)+\zeta_n^2\right).
\]

At the next step, $\boldsymbol{X}_{n+1}$ and $\boldsymbol{Y}_{n+1}$ can be coupled in the following ways.
\begin{itemize}
    \item {\bf Independent coupling:} $\zeta_1$ and $\zeta_2$ are independent variables.
    \item {\bf Synchronous coupling:} $\zeta_1 = \zeta_2$
    \item {\bf Reflection coupling: }  $\zeta_1$ and $\zeta_2$ satisfy a mirror symmetry against the middle hyperplane orthogonal to the direction connecting $\boldsymbol{X}_t$ and $\boldsymbol{Y}_t$.
    \item {\bf Maximal coupling: } a joint distribution on $(\boldsymbol{X}_{n+1}, \boldsymbol{Y}_{n+1})$ is constructed such that
\[
\mathbb{P}[\boldsymbol{X}_{n+1} = \boldsymbol{Y}_{n+1}] = 1 - \frac{1}{2}\|\mu - \nu\|_{\text{TV}}.
\]
\end{itemize}


Each of these coupling methods has advantages depending on the nature of the drift and diffusion terms in the SDE. The general strategy is to use synchronous coupling or reflection coupling until two trajectories are sufficiently close, and then switch to the maximal coupling. For further details, including construction techniques and analytical properties, we refer the reader to Chapter 3 of \cite{li2020numerical}. We run each trajectory of the numerical coupling method until $\boldsymbol{X}_t = \boldsymbol{Y}_t$, which gives a sample of the coupling time $\tau_c$. A Monte Carlo simulation of many coupled trajectories until their coupling times gives the desired coupling time distribution $\mathbb{P}\left[\tau_c > t\right]$. 

We can see the logarithm of coupling time distribution $\mathbb{P}\left[\tau_c > t\right]$ of SDE (\ref{eqn: ring SDE}) in Figure \ref{fig: Monte Carlo simulation for coupling time distribution}. Three pictures in Figure \ref{fig: Monte Carlo simulation for coupling time distribution} are obtained by running Monte Carlo simulation using synchronous coupling first and then maximal coupling when two trajectories are sufficiently close, with time step size $h=0.001$ and respectively $10^2$ samples, $10^4$ samples and $10^6$ samples. The yellow line and red line are respectively the lower confidence bound and upper confidence
bound in $95\%$ confidence level.

\begin{figure}[htbp]
    \centering
    \begin{subfigure}[b]{0.32\textwidth}
        \includegraphics[width=\textwidth]{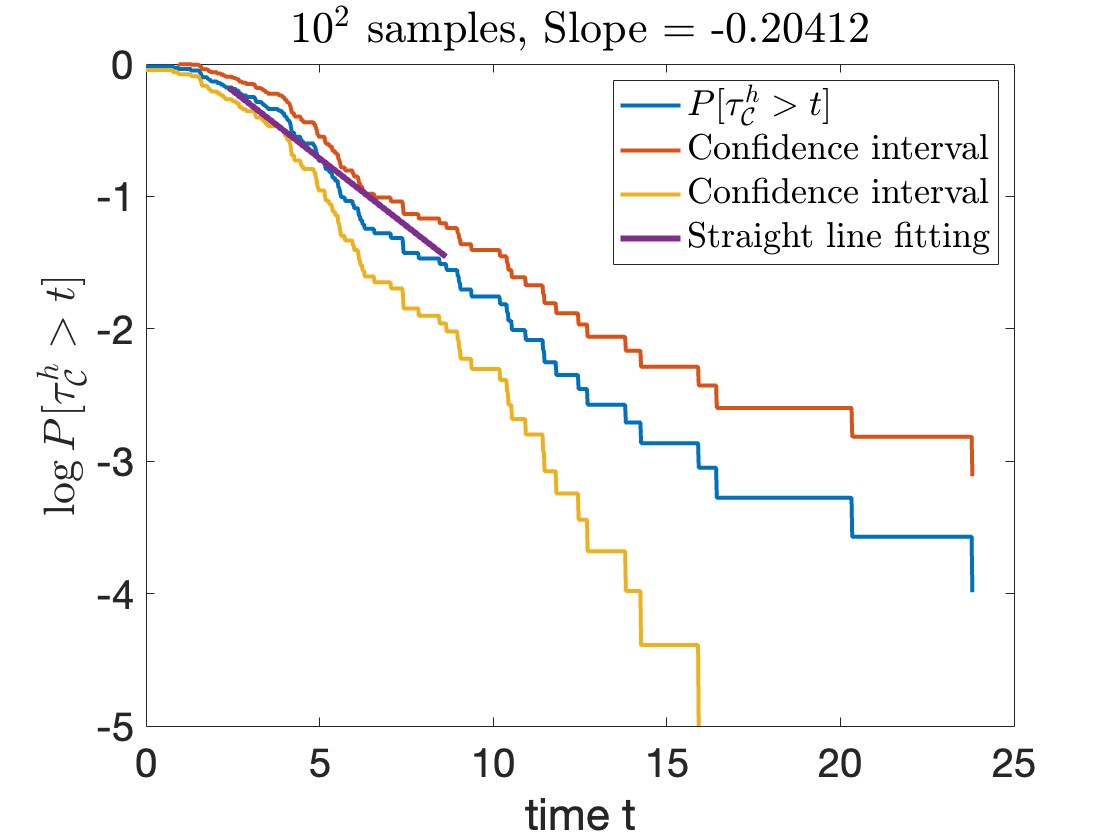}
    \end{subfigure}
    \hfill
    \begin{subfigure}[b]{0.32\textwidth}
        \includegraphics[width=\textwidth]{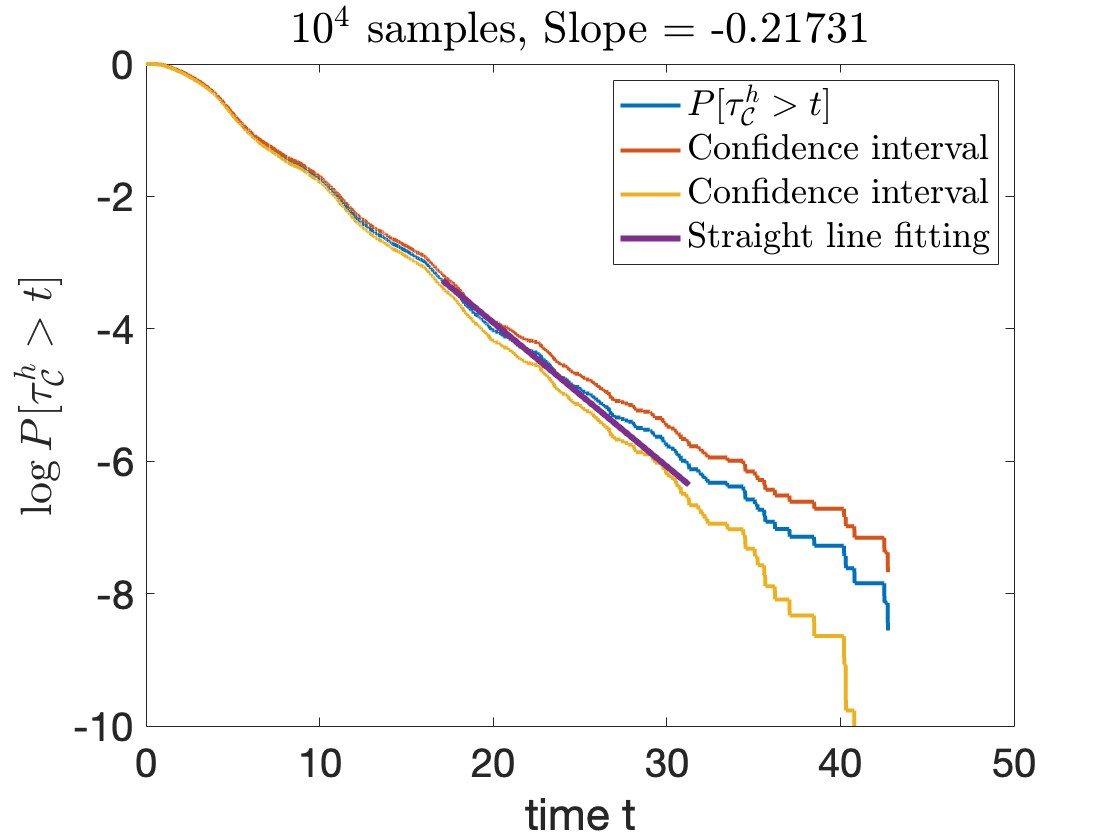}
    \end{subfigure}
    \hfill
    \begin{subfigure}[b]{0.32\textwidth}
        \includegraphics[width=\textwidth]{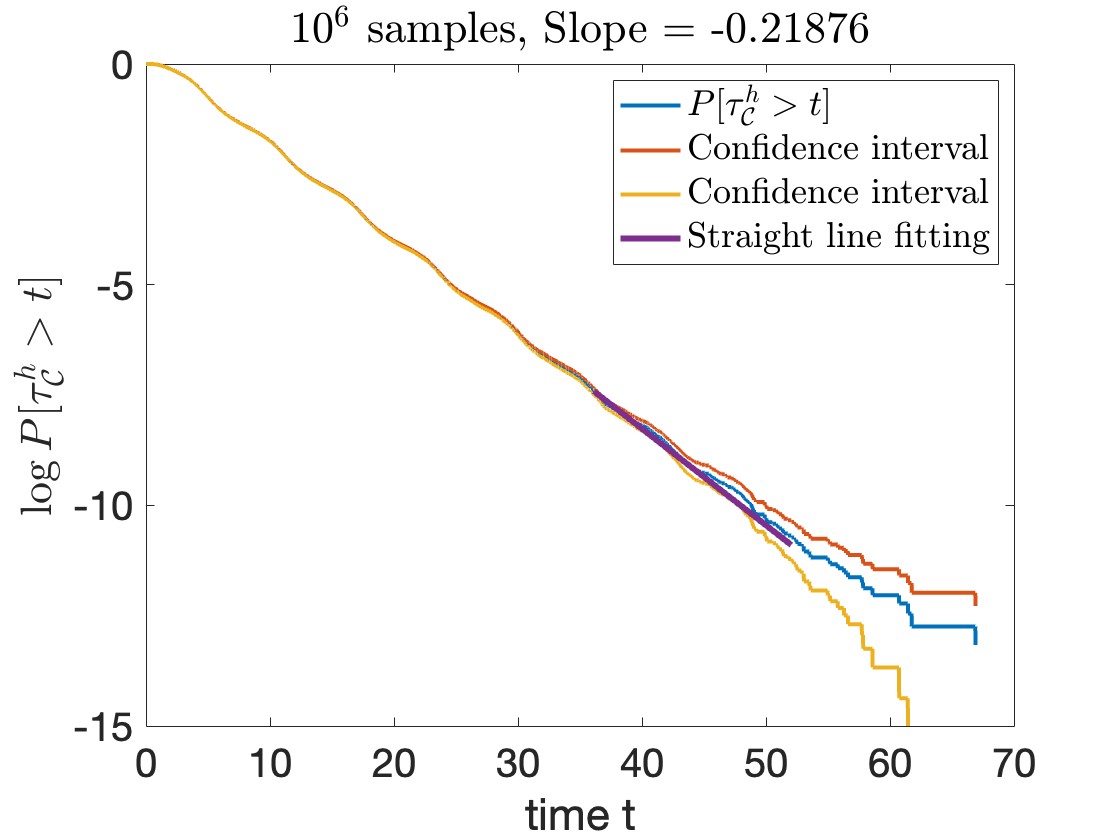}
    \end{subfigure}
    \caption{Logarithm of coupling time distribution $\mathbb{P}\left[\tau_c > t\right]$ of SDE (\ref{eqn: ring SDE}) in $95\%$ confidence interval. Left: Monte Carlo simulation with $10^2$ samples. Middle: Monte Carlo simulation with $10^4$ samples. Right: Monte Carlo simulation with $10^6$ samples.}
    \label{fig: Monte Carlo simulation for coupling time distribution}
\end{figure}

\section{Numerical examples}

\subsection{Toy example: Stuart–Landau oscillator}
Consider a two dimensional periodic stochastic system
\begin{equation} \label{eqn: example 6.1}
\left\{\begin{array}{l}
\mathrm{d} X_t = \left( -4 X_t\left(X_t^2+Y_t^2-(c+\sin t) \right)+ \frac{X_t \cos t}{2\left(X_t^2+Y_t^2\right)}\right) \mathrm{d} t+\varepsilon \mathrm{d} W_t^x, \\
\mathrm{d} Y_t = \left( -4 Y_t\left(X_t^2+Y_t^2-(c+\sin t) \right)+\frac{Y_t \cos t}{2\left(X_t^2+Y_t^2\right)} \right) \mathrm{d} t+\varepsilon \mathrm{d} W_t^y,
\end{array}\right.
\end{equation}
\noindent where $W_t^x$ and $W_t^y$ are independent Wiener processes, the diffusion coefficient $\varepsilon = \sqrt 2$, and $c$ is a constant whose value must be chosen carefully. The drift part of equation (\ref{eqn: example 6.1}) is a time-periodic gradient flow of the potential function
\[
V(x,y,t) = (x^2+y^2-(c+\sin t))^2
\]
Hence, the probability density function of the periodic invariant measure of (\ref{eqn: example 6.1}) can be explicitly written as
\begin{equation}
\label{61explicit}
u(x,y,t) = \frac{1}{Z(t)} e^{-V(x,y,t)},
\end{equation}
where $Z(t) = \int _{-\infty}^{+\infty} \int _{-\infty}^{+\infty} \exp\left( - (x^2 + y^2 - (c + \sin t))^2 \right) \mathrm{d}x \mathrm{d}y$  the time-dependent normalization factor.

Notice that being different from the "ring example" used in \cite{zhai2022deep}, equation \eqref{eqn: example 6.1} has a singularity near the origin, causing many theoretical and computational troubles. To prevent this, we choose a large $c$ to make the probability of hitting the small neighborhood of the origin extremely small. This means we are effectively solving a quasi-stationary distribution (QSD) in the domain without a very small neighborhood of the origin, with an extremely small killing rate. After some tests, we find that the probability of hitting the origin becomes negligible when $c = 5$. Since the primary role of this example is to compare the output of our algorithm to the explicit solution, we let $c = 5$ and compare our numerical solution with equation \eqref{61explicit} from now on. 

\begin{figure}[htbp]
    \centering
    \begin{subfigure}[b]{0.3\textwidth}
        \includegraphics[width=\textwidth]{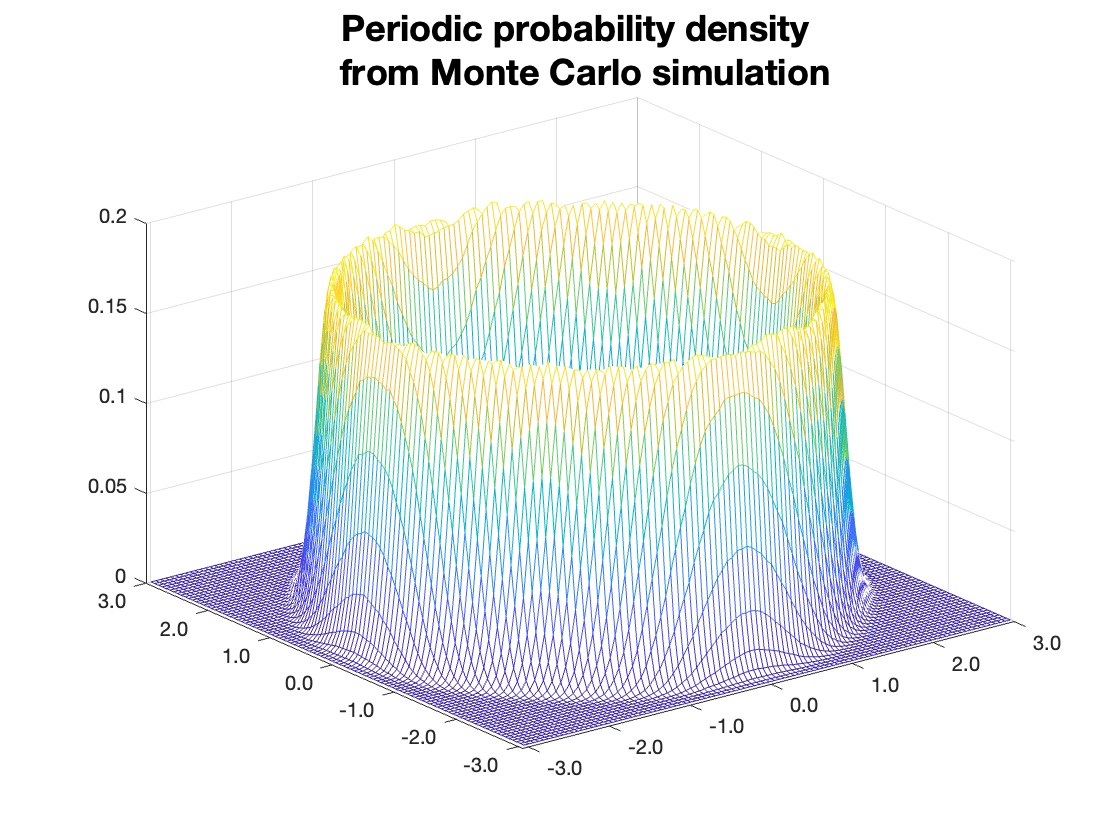}
    \end{subfigure}
    \hfill
    \begin{subfigure}[b]{0.3\textwidth}
        \includegraphics[width=\textwidth]{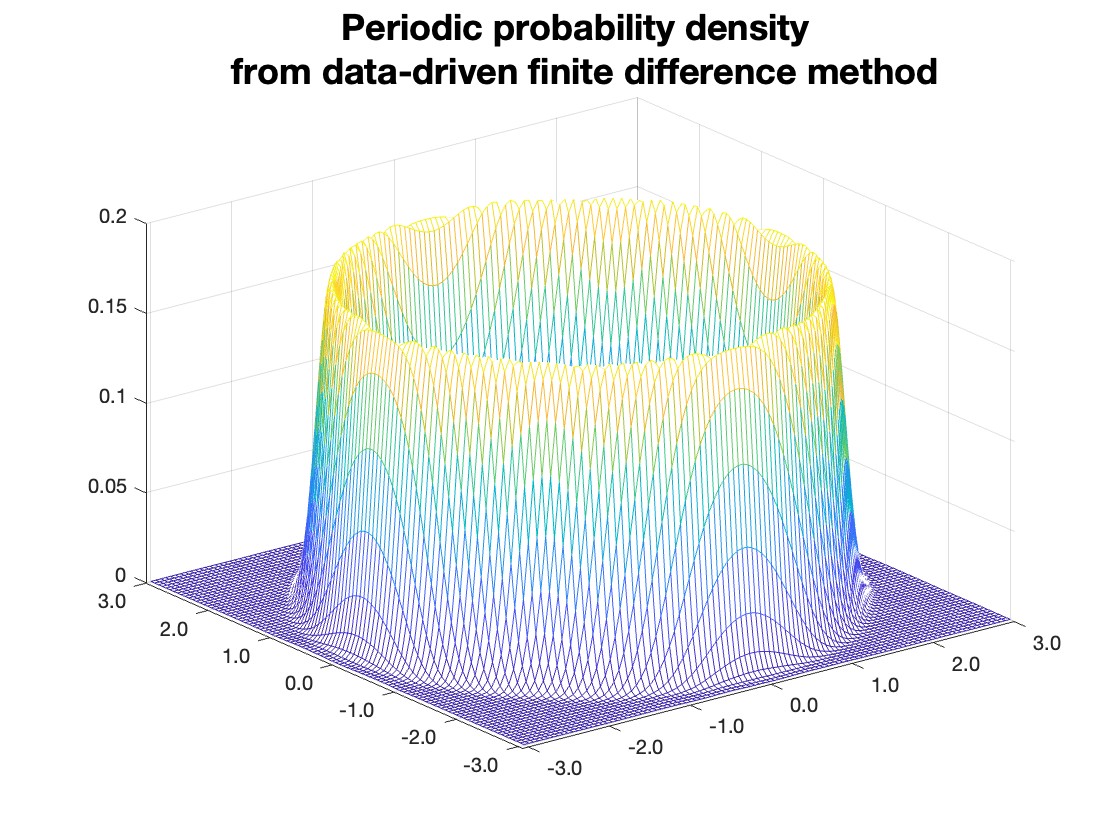}
    \end{subfigure}
    \hfill
    \begin{subfigure}[b]{0.3\textwidth}
        \includegraphics[width=\textwidth]{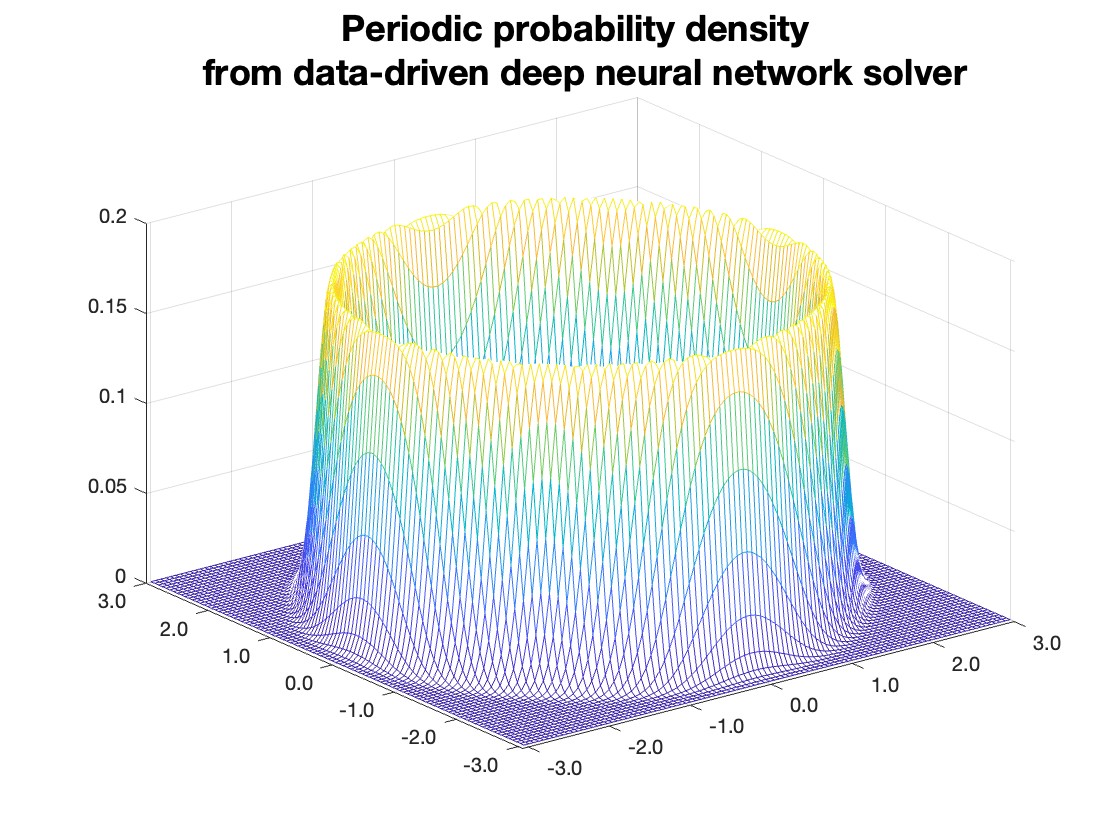}
    \end{subfigure}
    
    \vspace{0.3cm}
    
    \begin{subfigure}[b]{0.3\textwidth}
        \includegraphics[width=\textwidth]{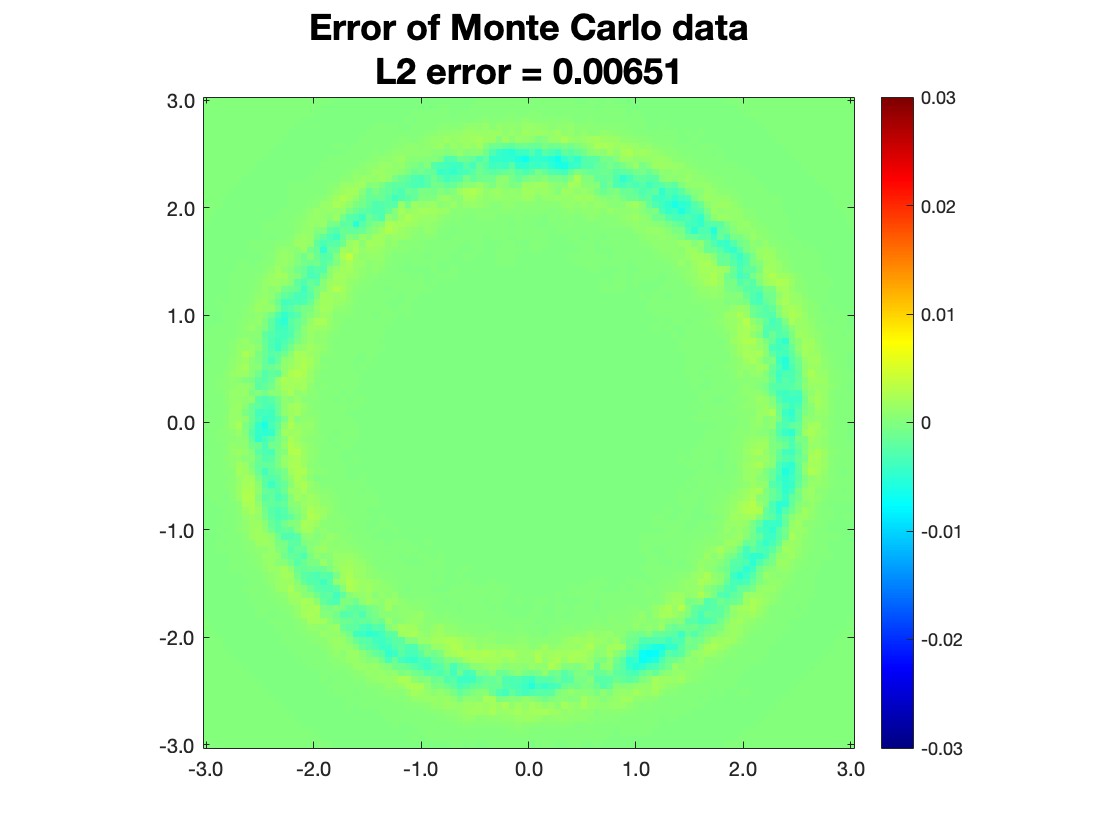}
    \end{subfigure}
    \hfill
    \begin{subfigure}[b]{0.3\textwidth}
        \includegraphics[width=\textwidth]{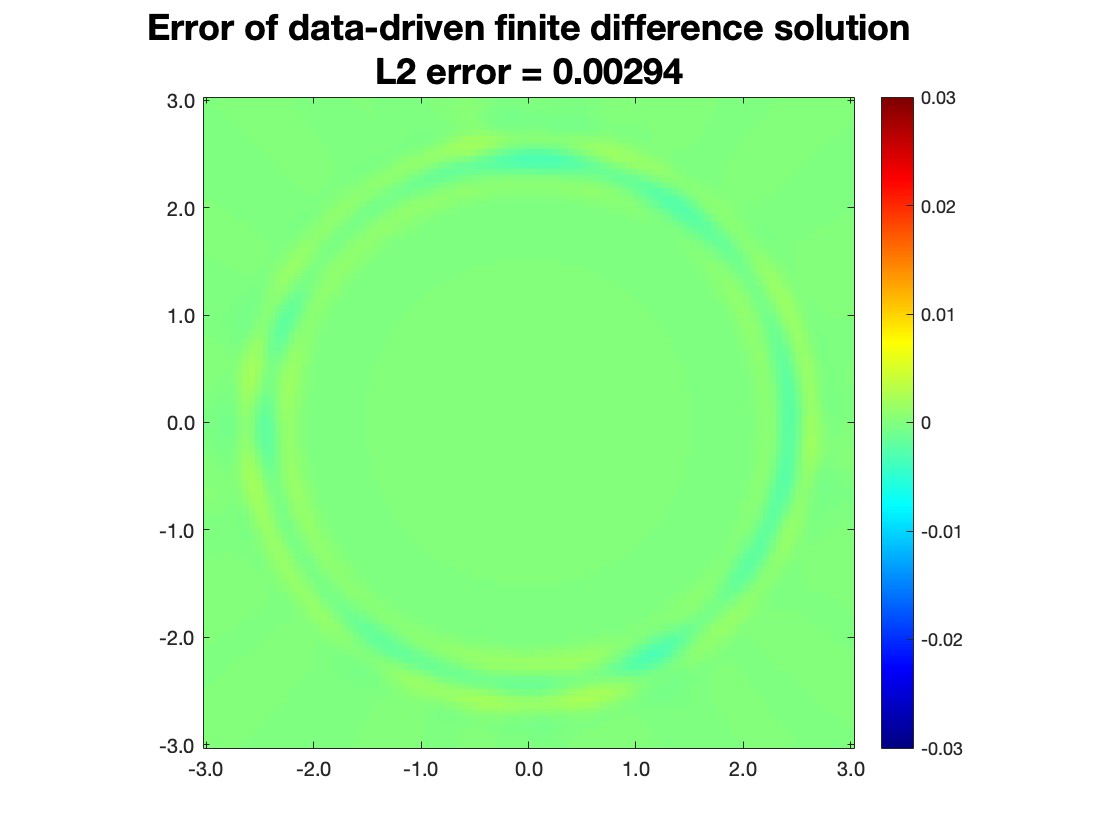}
    \end{subfigure}
    \hfill
    \begin{subfigure}[b]{0.3\textwidth}
        \includegraphics[width=\textwidth]{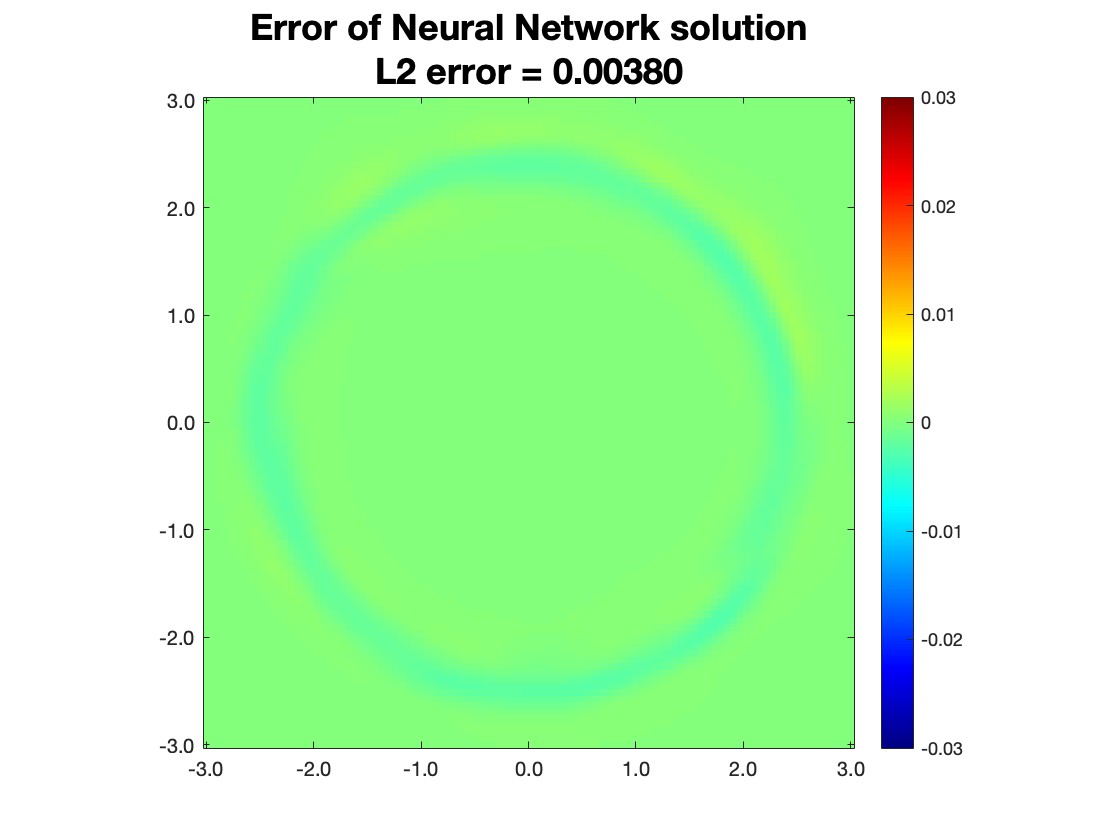}
    \end{subfigure}
    
    \caption{A comparison of periodic probability density function of the periodic invariant measure obtained by Monte Carlo simulation, Data-driven finite difference method, and optimization through an artificial neural network at time $t=\frac{\,17\,}{\,25\,}\pi$. First row: periodic probability density functions. Second row: distribution of error against the exact solution. (Grid size = $100\times 100\times 100$. Sample size of Monte Carlo: $10^9$.)}
    \label{fig:Example 6.1 comparision of solutions}
\end{figure}


\begin{figure}[htbp]
    \centering
    \includegraphics[width=1.0\textwidth]{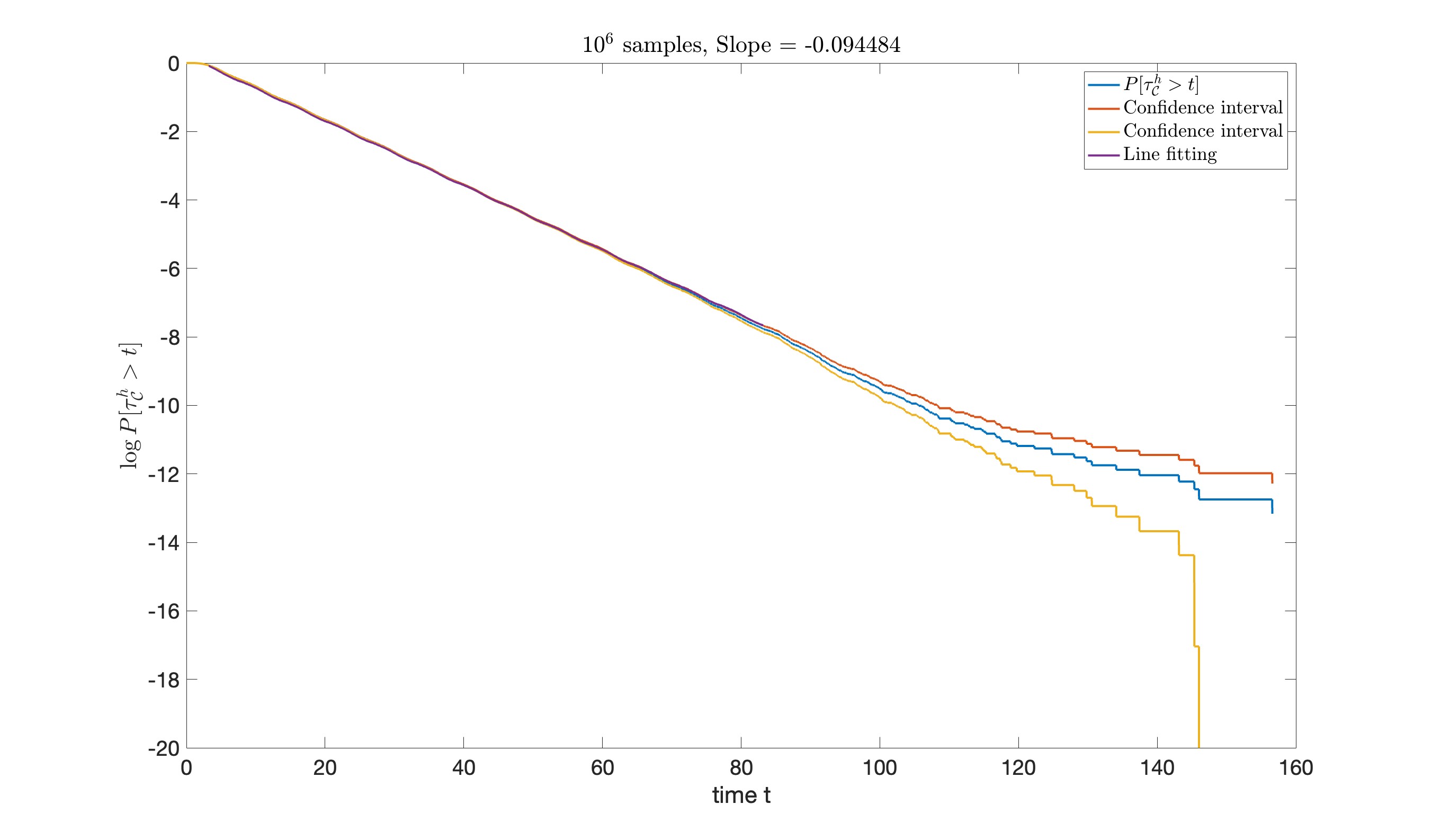}
    \caption{Approximation of $\log P(\tau_c > t)$ and the related confidence interval.}
    \label{fig: Example 6.1 convergence rate}
\end{figure}

In Figure \ref{fig:Example 6.1 comparision of solutions}, the first column gives a Monte Carlo approximation and its error distribution. The Monte Carlo solution is obtained by running an Euler-Maruyama numerical scheme with $10^9$ steps and calculating the empirical probability on a $100 \times 100 \times 100$ mesh of the origin $D = [-3,3]\times[-3,3]\times[0,2\pi]$. The time step size is $0.001$. In the second column of Figure \ref{fig:Example 6.1 comparision of solutions}, the optimization problems (\ref{Eqn:new optimization}) are solved by linear algebra solvers. The last column of Figure \ref{fig:Example 6.1 comparision of solutions} shows the neural network training with loss function (\ref{Eqn:loss function}) using all probability densities at grid points obtained by the same Monte Carlo simulation. The architecture of the artificial neural network is a small feed-forward neural network with 6 hidden layers, each of which contains $64$, $128$, $128$, $128$, $64$, $16$ neurons respectively, to approximate the solution to the Fokker-Planck equation of the example. The output layer has one neuron. The number of neurons in the input layer is 3 (for $x$, $y$ and $t$). The activation function is sigmoid function. The number of training points is $10000$. The original learning rate for loss functions $L_1$, $L_2$ and $L_3$ in (\ref{Eqn:loss function}) is $0.001$, $0.002$ and $0.002$ respectively, and the learning rate for $L_2$ and $L_3$ will be reduced by $0.9$ after every 40 epochs, while the learning rate for $L_1$ remains unchanged. There are 200 training epochs in total. The trained neural network is evaluated on a $50\times50\times50$ grid. And the minimum value of loss functions are $L_1 = 0.08458879476$, $L_2 = 0.00013776$, $L_3 = 0.00225265$ respectively. And the discrete $L^2$ errors of Monte Carlo data, data-driven difference solution and Neural Network solution against the exact solution are computed respectively and demonstrated in the title of each sub-figures. This demonstrates the effectiveness of both the finite difference solver and the neural network solver. 

Next, we estimate the convergence rate to the periodic invariant probability measure using the coupling method. Figure \ref{fig: Example 6.1 convergence rate} is obtained by using the coupling method discussed in Section 5.2 with $N=10^6$ samples and time step size $h=0.001$. Before the two trajectories are sufficiently close to each other, we use the independent couplings. The convergence rate $r=0.094484$, and we choose $k=6$ for the fitting of the periodic function $C(t)$. As seen in Figure \ref{fig: Example 6.1 convergence rate}, the probability distribution $P[\tau^h_c > t]$ does have a $T$-periodic pre-factor, which is consistent with our discussion in Section 5.2.

\subsection{Chaotic forced Van der Pol equation}
We consider a chaotic forced Van der Pol equation with a strange attractor
\begin{equation} \label{eq:forced_VDP}
\begin{cases}
\mathrm{d}X_t = Y_t \, \mathrm{d}t + \varepsilon \, \mathrm{d}W_t^x , \\
\mathrm{d}Y_t = \left[ \mu (1 - X_t^2) Y_t - X_t + A \sin(\omega t) \right] \mathrm{d}t + \varepsilon \, \mathrm{d}W_t^y,
\end{cases}
\end{equation}
where nonlinear damping parameter $\mu = 2.0$, amplitude of external forcing $A = 1.2$, forcing frequency $\omega = 0.2 \pi$, the diffusion coefficient $\varepsilon = 0.25$, $W_t^x$ and $W_t^y$ are independent Wiener processes.

\begin{figure}[htbp]
    \centering
    
    \begin{subfigure}{0.45\textwidth}
        \centering
        \includegraphics[width=\linewidth]{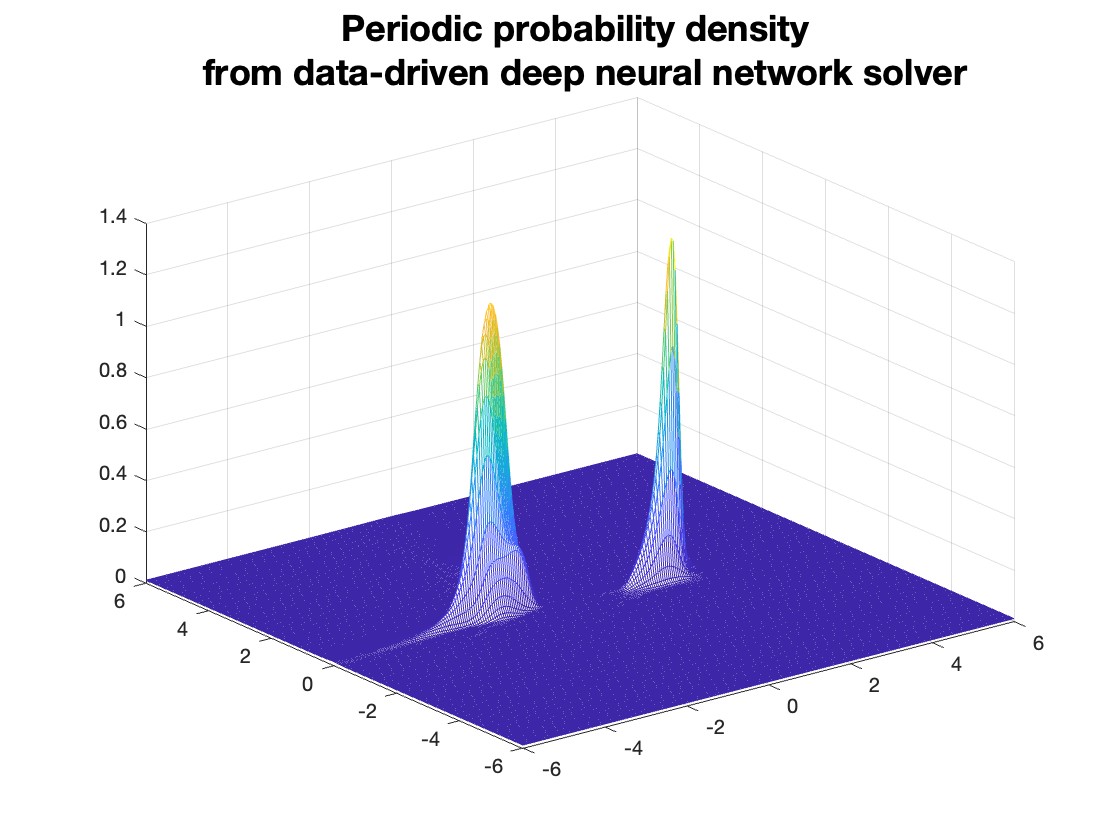}
    \end{subfigure}
    \hfill
    \begin{subfigure}{0.45\textwidth}
        \centering
        \includegraphics[width=\linewidth]{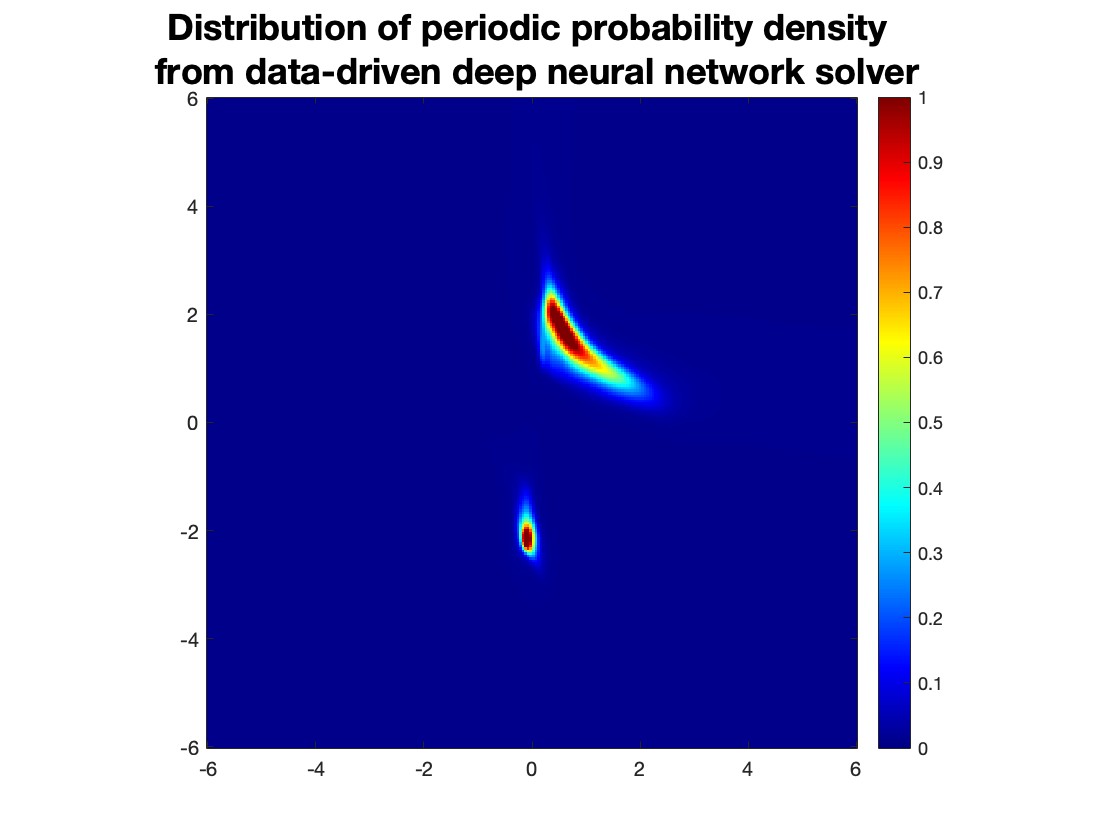}
    \end{subfigure}
    
    \par\bigskip
    \begin{subfigure}{0.45\textwidth}
        \centering
        \includegraphics[width=\linewidth]{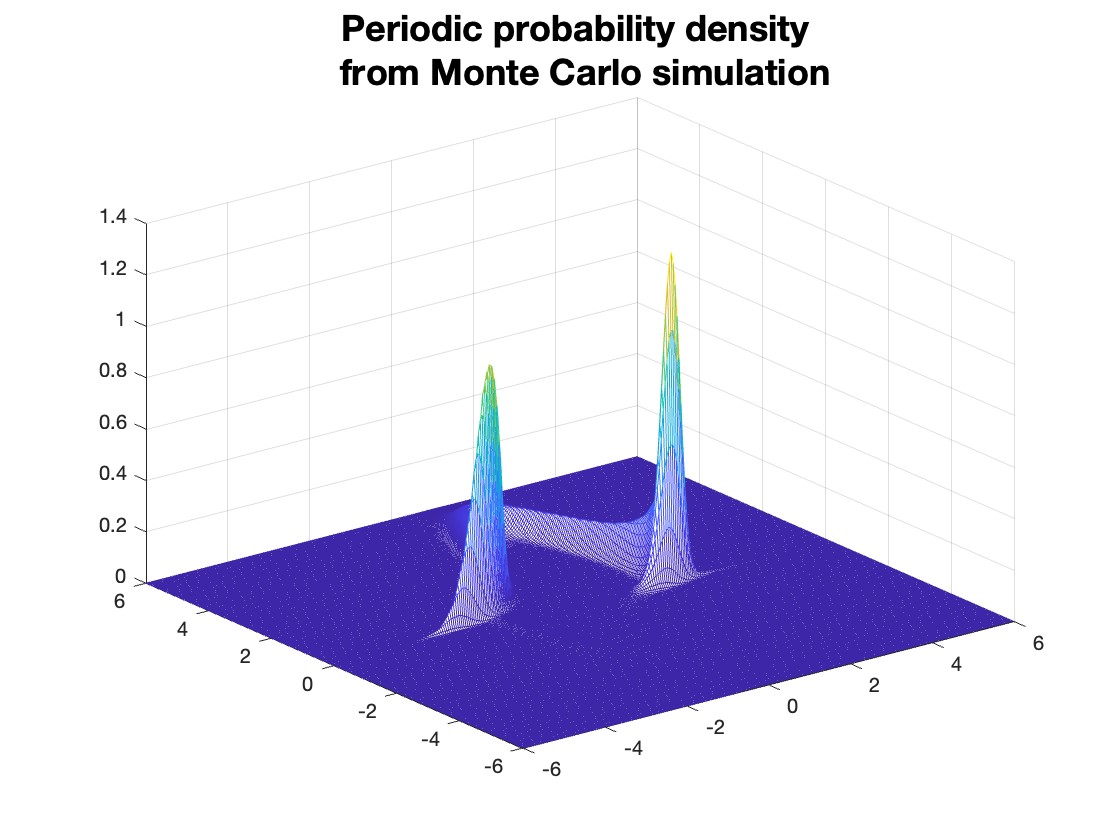}
    \end{subfigure}
    \hfill
    \begin{subfigure}{0.45\textwidth}
        \centering
        \includegraphics[width=\linewidth]{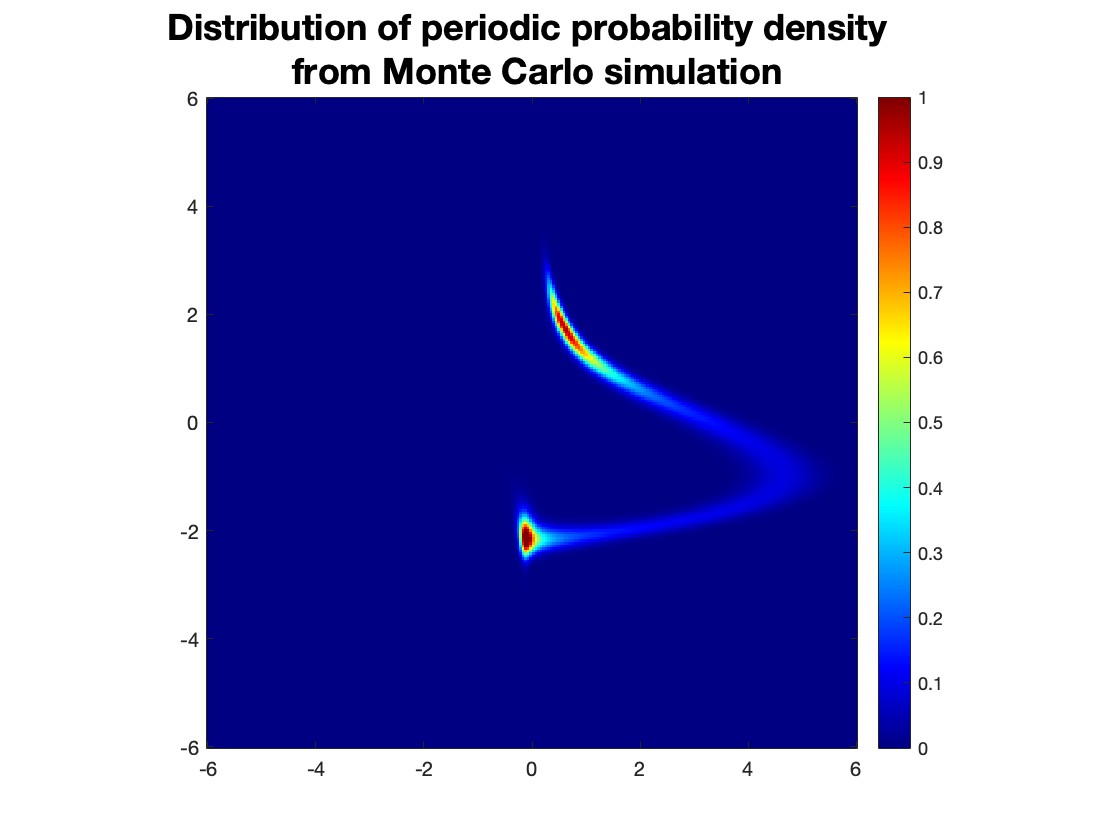}
    \end{subfigure}
    
    \caption{First row: Neural Network training result at time $t = 48\cdot\frac{10}{256} = 1.875$, with grid size $= 256\times 256\times 256$. The left one is the periodic probability density function in 3D, and the right one is the distribution of probability density in 2D. Second row: Monte Carlo simulation at time $t = 48\cdot\frac{10}{256} = 1.875$, with grid size $= 256\times 256\times 256$. The left one is the periodic probability density function in 3D, and the right one is the distribution of probability density in 2D.}
    
    \label{fig: Example 6.2 neural network training result v.s. Monte Carlo}
\end{figure}

In Figure \ref{fig: Example 6.2 neural network training result v.s. Monte Carlo}, the numerical domain is $D = [-6,6] \times [-6,6] \times [0,10] \subset \mathbb{R}^3$. A direct Monte Carlo simulation that uses the Euler-Maruyama scheme with sample size $N = 10^9$, time step size $h=0.001$ and $256 \times 256 \times 256$ mesh, is used to generate subplots in the second row of Figure \ref{fig: Example 6.2 neural network training result v.s. Monte Carlo}. We use Algorithm 2 to sample $10^4$ reference points and $10^4$ training points in the domain $D$. The neural network training uses all probability densities at grid points obtained by the same Monte Carlo simulation. Then we train the artificial neural network with loss function (\ref{Eqn:loss function}), and the neural network training result is in the first row of Figure \ref{fig: Example 6.2 neural network training result v.s. Monte Carlo}. The architecture of the artificial neural network is a small feed-forward neural network with three input neurons and one output neuron, and six hidden layers, each of which contains $64$, $128$, $128$, $128$, $64$, $16$ neurons respectively. The activation function is sigmoid function. The original learning rate for loss functions $L_1$, $L_2$ and $L_3$ in (\ref{Eqn:loss function}) is $0.001$, $0.002$ and $0.002$ respectively, and the learning rate for $L_2$ and $L_3$ will be reduced by $0.9$ after every 40 epochs, while the learning rate for $L_1$ remains unchanged. There are 200 training epochs in total. The trained neural network is evaluated on a $100\times100\times100$ grid. The minimum loss functions are $L_1 = 0.06969284$, $L_2 = 0.04186701$, $L_3 = 0.00170170$ respectively. 

    

In addition to the result of neural network training at time $t=1.875$, we also obtain the periodic probability density at time $t=1.25$, $t=2.5$, $t=6.875$ and $t=8.75$, obtained from neural network training using the same architecture. The periodic probability density and related distribution are all shown in Figure \ref{Example 6.2: neural network result at t=4,6,8,10}.

\begin{figure}[htbp]
    \centering
    
    \begin{subfigure}{0.45\textwidth}
        \centering
        \includegraphics[width=\linewidth]{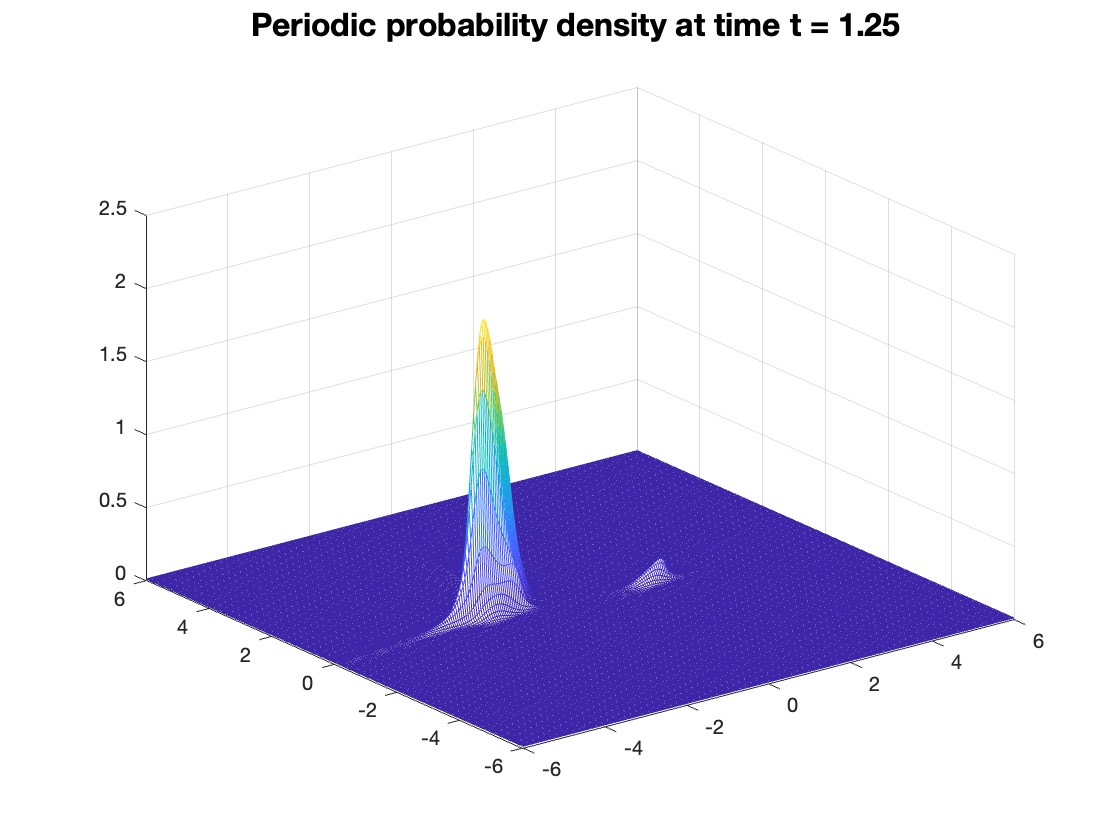}
    \end{subfigure}
    \hfill
    \begin{subfigure}{0.45\textwidth}
        \centering
        \includegraphics[width=\linewidth]{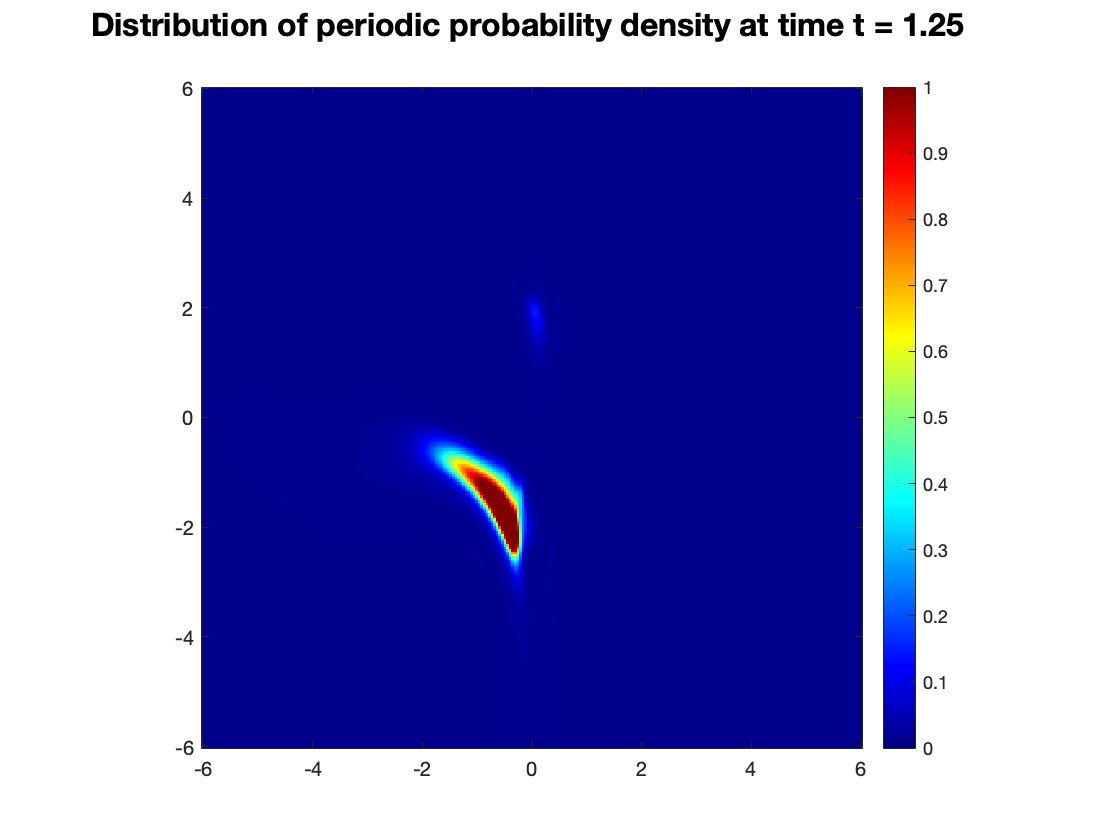}
    \end{subfigure}

    \par\bigskip
    \begin{subfigure}{0.45\textwidth}
        \centering
        \includegraphics[width=\linewidth]{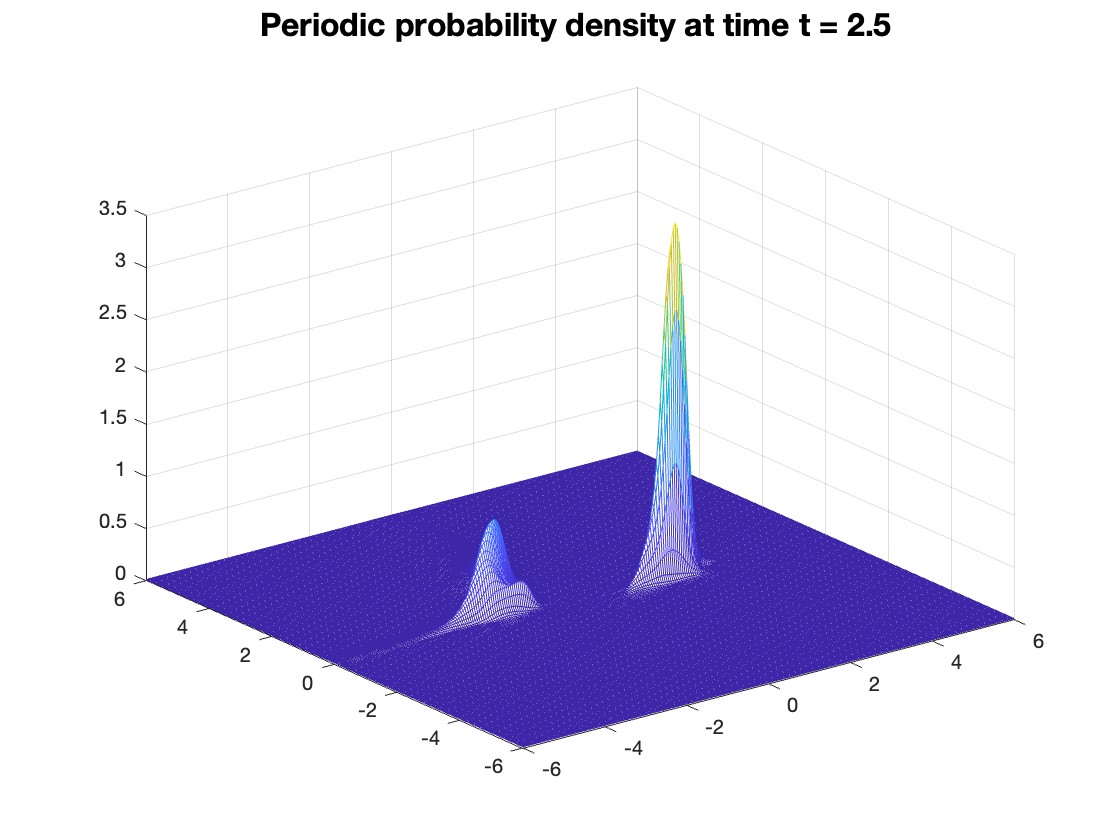}
    \end{subfigure}
    \hfill
    \begin{subfigure}{0.45\textwidth}
        \centering
        \includegraphics[width=\linewidth]{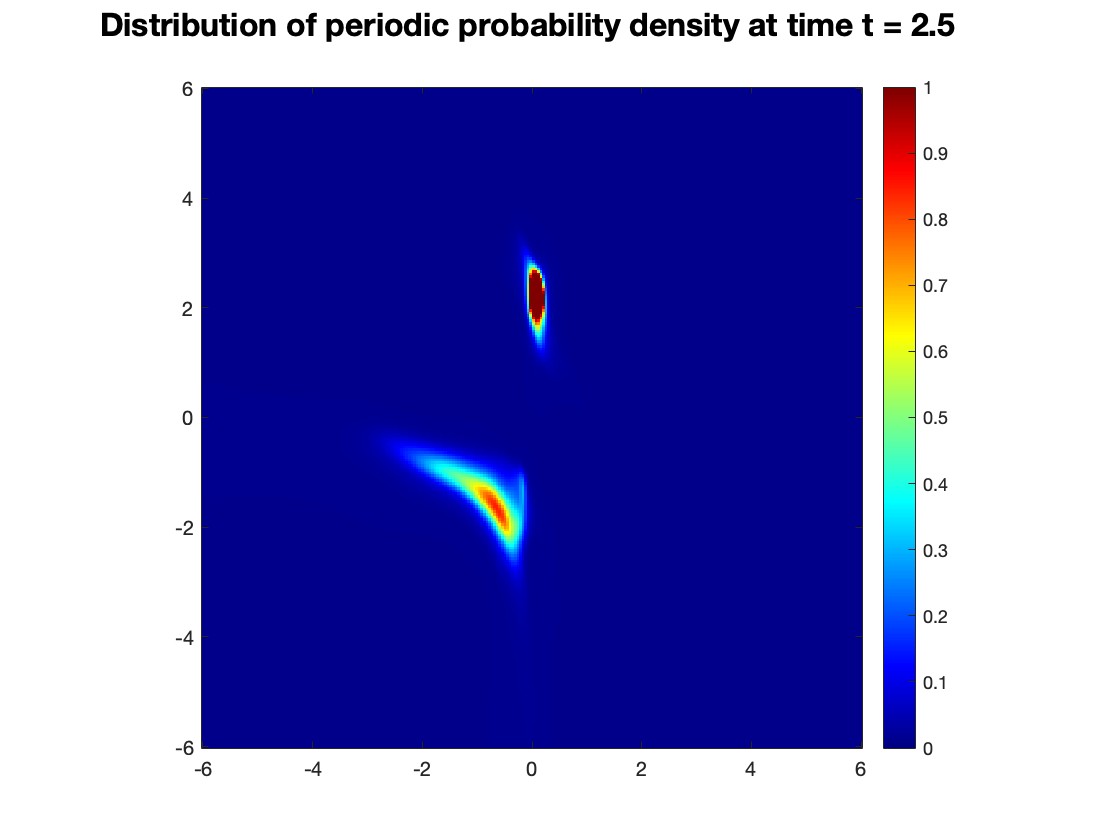}
    \end{subfigure}
    
    \par\bigskip
    \begin{subfigure}{0.45\textwidth}
        \centering
        \includegraphics[width=\linewidth]{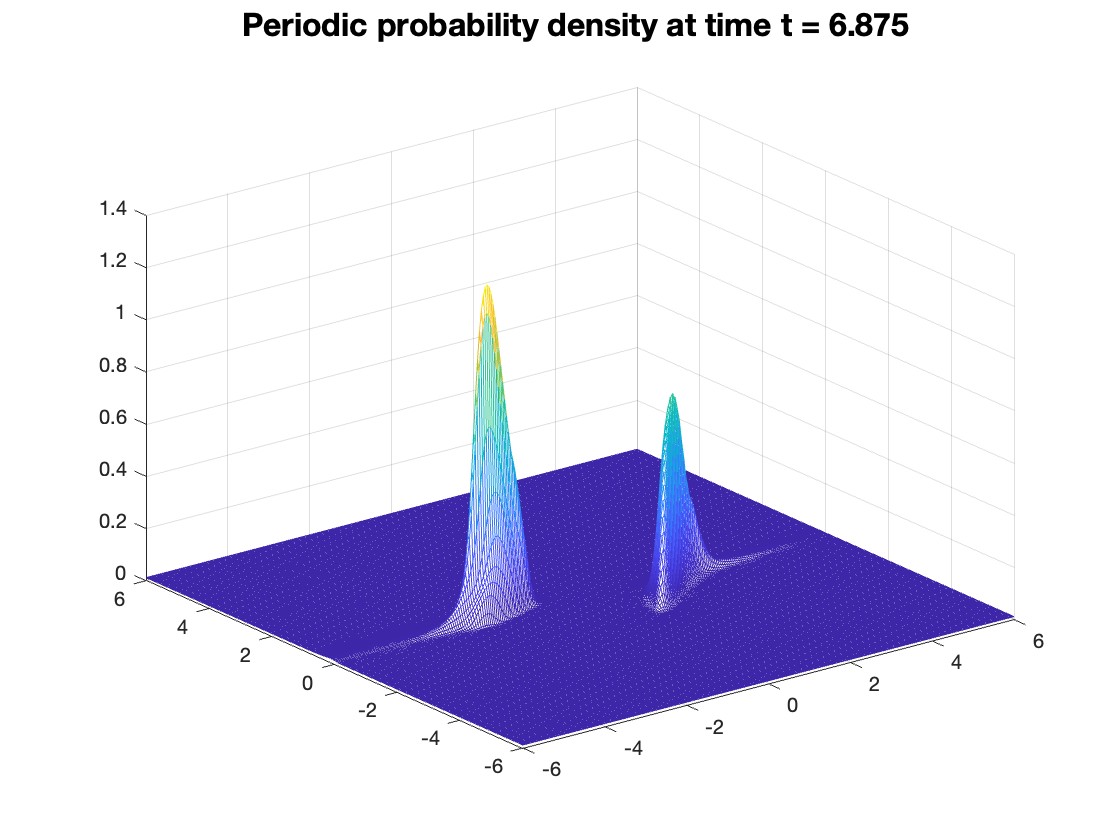}
    \end{subfigure}
    \hfill
    \begin{subfigure}{0.45\textwidth}
        \centering
        \includegraphics[width=\linewidth]{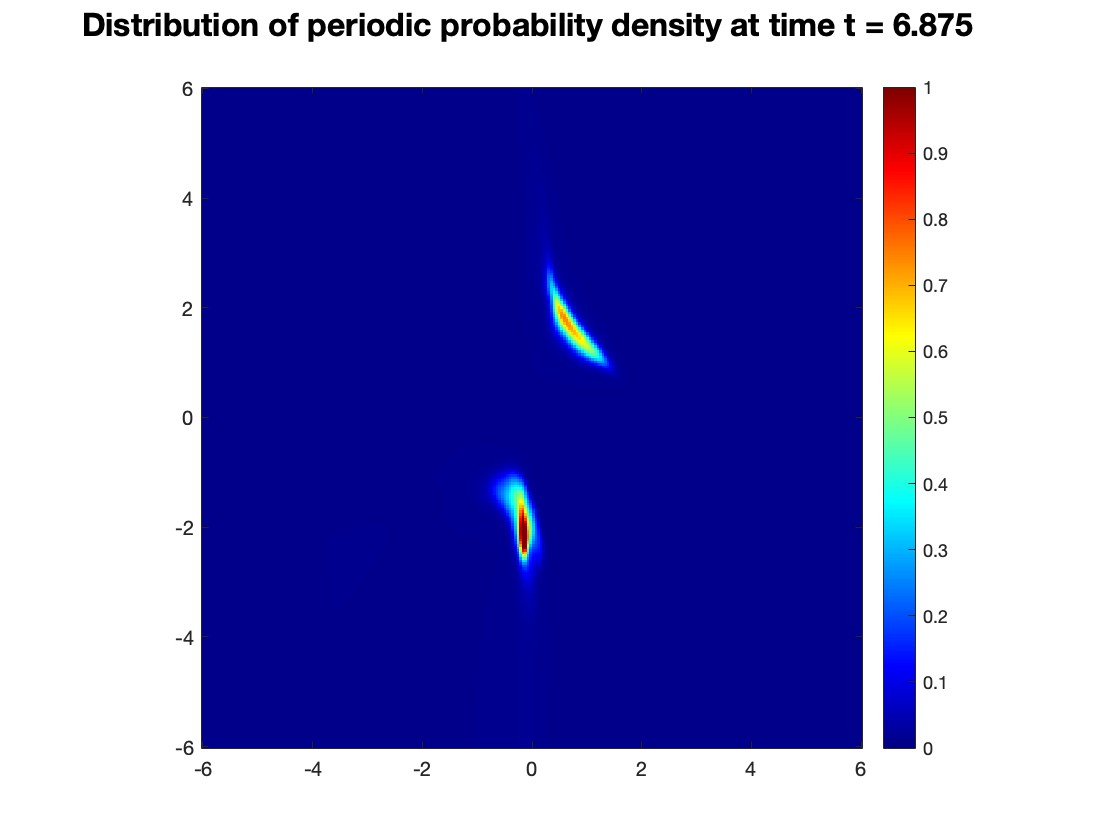}
    \end{subfigure}

    \par\bigskip
    \begin{subfigure}{0.45\textwidth}
        \centering
        \includegraphics[width=\linewidth]{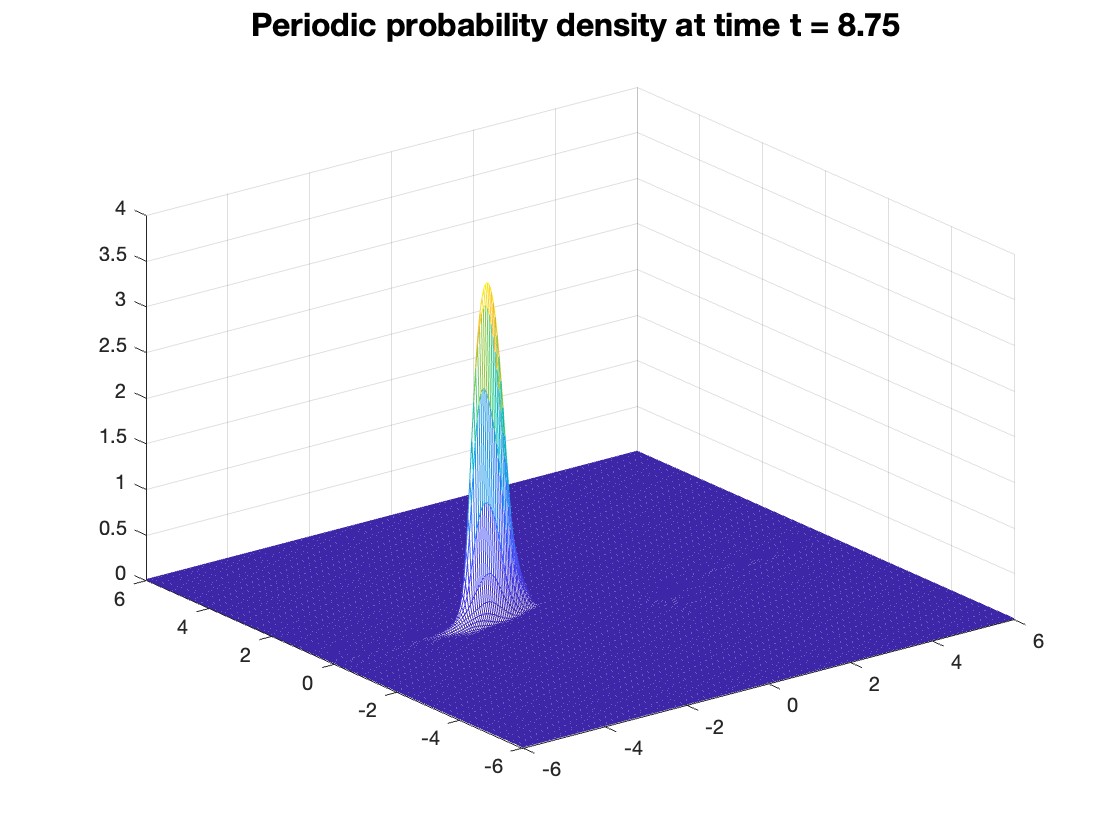}
    \end{subfigure}
    \hfill
    \begin{subfigure}{0.45\textwidth}
        \centering
        \includegraphics[width=\linewidth]{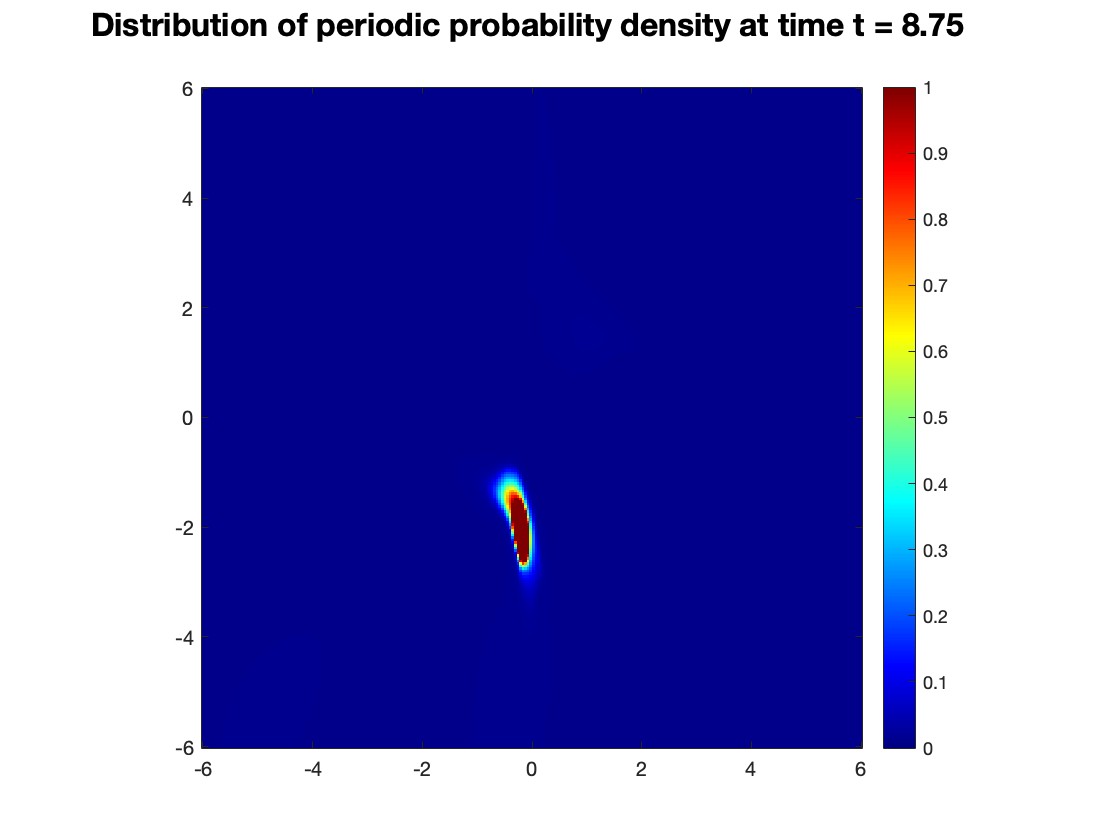}
    \end{subfigure}

    \caption{Neural Network training result in 3D and distribution in 2D of Example 6.2 at time slice $t = 1.25,\,2.5,\,6.875,\,8.75$ respectively with grid size $= 256\times 256\times 256$.}
   \label{Example 6.2: neural network result at t=4,6,8,10}
\end{figure}

\begin{figure}[htbp]
    \centering
    \includegraphics[width=1.0\textwidth]{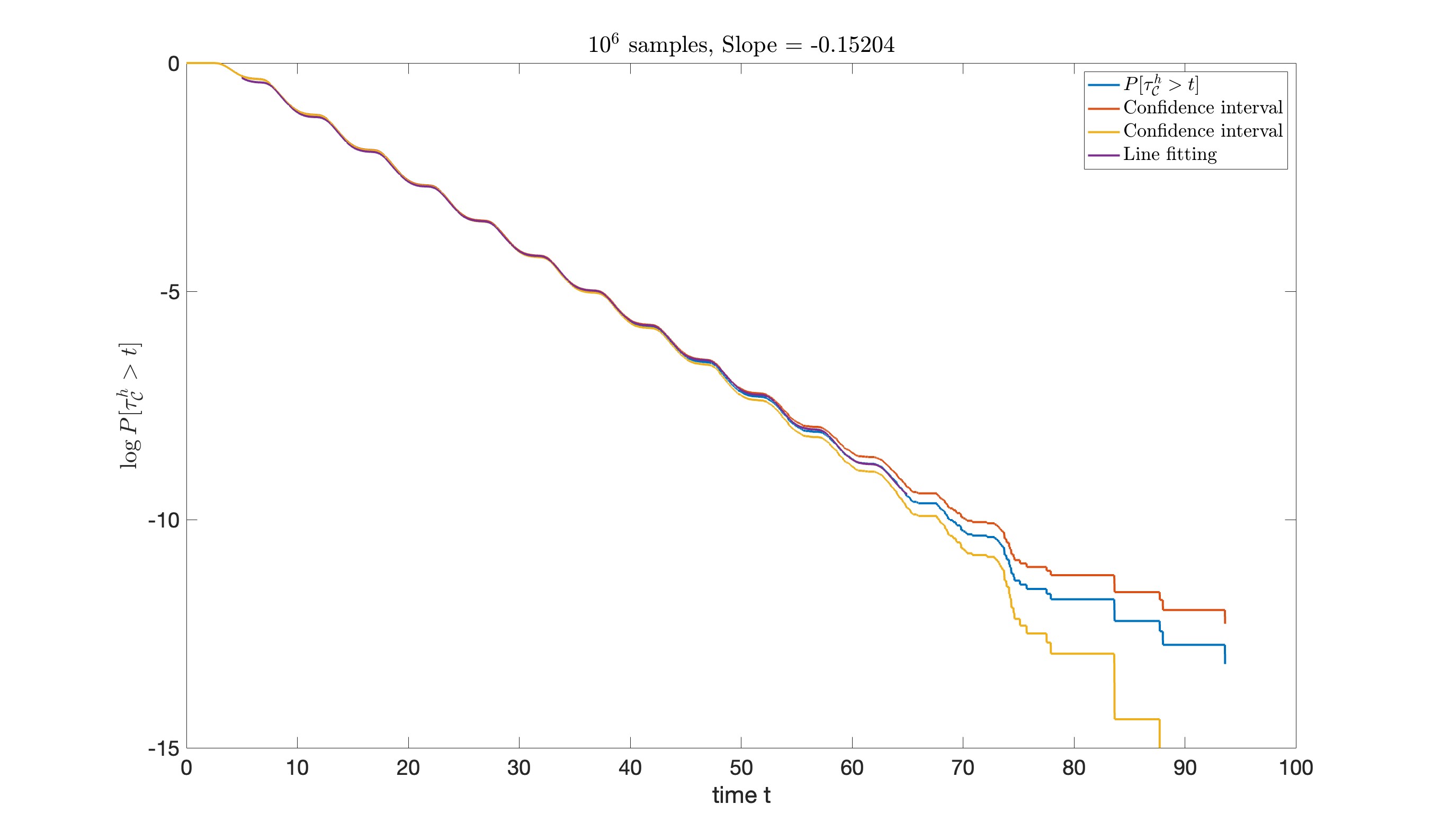}
    \caption{Approximation of $\log P(\tau_c > t)$ and the related confidence interval.}
    \label{fig: Example 6.2 convergence rate}
\end{figure}

In Figure \ref{fig: Example 6.2 convergence rate}, the convergence rate to the periodic invariant probability measure is obtained by using the coupling method in Section 5.2 with $N=10^6$ samples and time step size $h=0.001$. The initial position for two points are $(-2.0, \,0.0)$ and $(2.0, \,0.0)$. We use the reflection couplings before using maximal couplings. The convergence rate $r = 0.15204$. We choose $k=6$ in the algorithm of estimating $C(t)$. Also, Figure \ref{fig: Example 6.2 convergence rate} shows the $T$-periodic pre-factor of the probability distribution $P[\tau^h_c > t]$, which is again consistent with the analysis in Section 5.2.

\subsection{Periodically driven neuronal oscillators}
We consider the following periodically driven neuronal oscillators, which is one of the generalization of the periodic Van der Pol oscillators in higher dimensional space.
\begin{equation} \label{eq:coupled_VDP}
\begin{cases}
\mathrm{d}X_t = \left[ \mu(t) \left( X_t - \frac{1}{3} X_t^3 - Y_t \right) + \gamma (Z_t - X_t) \right] \mathrm{d}t + \varepsilon \, \mathrm{d}W_t^x, \\
\mathrm{d}Y_t = \frac{1}{\mu(t)} X_t \, \mathrm{d}t + \varepsilon \, \mathrm{d}W_t^y, \\
\mathrm{d}Z_t = \left[ \mu(t) \left( Z_t - \frac{1}{3} Z_t^3 - U_t \right) + \gamma (X_t - Z_t) \right] \mathrm{d}t + \varepsilon \, \mathrm{d}W_t^z, \\
\mathrm{d}U_t = \frac{1}{\mu(t)} Z_t \, \mathrm{d}t + \varepsilon \, \mathrm{d}W_t^u,
\end{cases}
\end{equation}
where $\mu(t) = 1.2 + 0.2 \sin(0.5 t)$ is time-periodic with period $T = 4\pi$, $\gamma$ is coupling strength, diffusion coefficient $\varepsilon = 0.5$, and $W_t^x$, $W_t^y$, $W_t^z$ and $W_t^w$ are four independent Wiener processes. 

When $\gamma = 0$, the equation (\ref{eq:coupled_VDP}) is a decoupled Van der Pol oscillator. From a computational point of view, the decoupled Van der Pol oscillator is indeed more challenging as it admits a 4D invariant manifold in a 5D phase space. If $\gamma \neq 0$, the equation becomes a coupled Van der Pol oscillator. In this example we use $\gamma = 0.03$ in coupled Van der Pol oscillators. 

We first compute the decoupled Van der Pol system with $\gamma = 0$. The numerical domain is $[-3,3] \times [-3,3] \times [-3,3] \times [-3,3] \times [-2 \pi,2\pi]\subset \mathbb{R}^5$. A direct Monte Carlo simulation that uses $10^{11}$ steps of the Euler-Maruyama scheme is used to estimate the probability density at $40000$ collocation points. We use the mesh-free algorithm developed in \cite{dobson2019efficient} to reduce the memory requirement. Then we generate $40000$ reference points to minimize the Fokker-Planck operator residual. The architecture of the artificial neural network is a feed-forward neural network with five input neurons and one output neuron, and six hidden layers, each of which contains 64, 256, 256, 256, 64, 16
neurons respectively. The trained neural network is evaluated at four $(x,y,t)$ 3D-cubes for $(z,u) = (1.95, -0.39)$, $(1.77, 0.15)$, $(-1.65, -0.57)$ and $(-1.83, -0.15)$ respectively. For each 3D-cube, we present two $(x,y)$-slices at $t = -\pi$ and $t = \pi$. The heat maps of 8 $(x,y)$-slices of the decoupled Van der Pol oscillator are presented in Figure \ref{VDP4d}. From Figure \ref{VDP4d}, we can see that the probability density functions at all $8$ $(x,y)$-slices are similar to each other, which is consistent with the fact that the periodic invariant probability measure at each time is a product measure of two 2D probability measures.


\begin{figure}
    \centering
    \includegraphics[width=\linewidth]{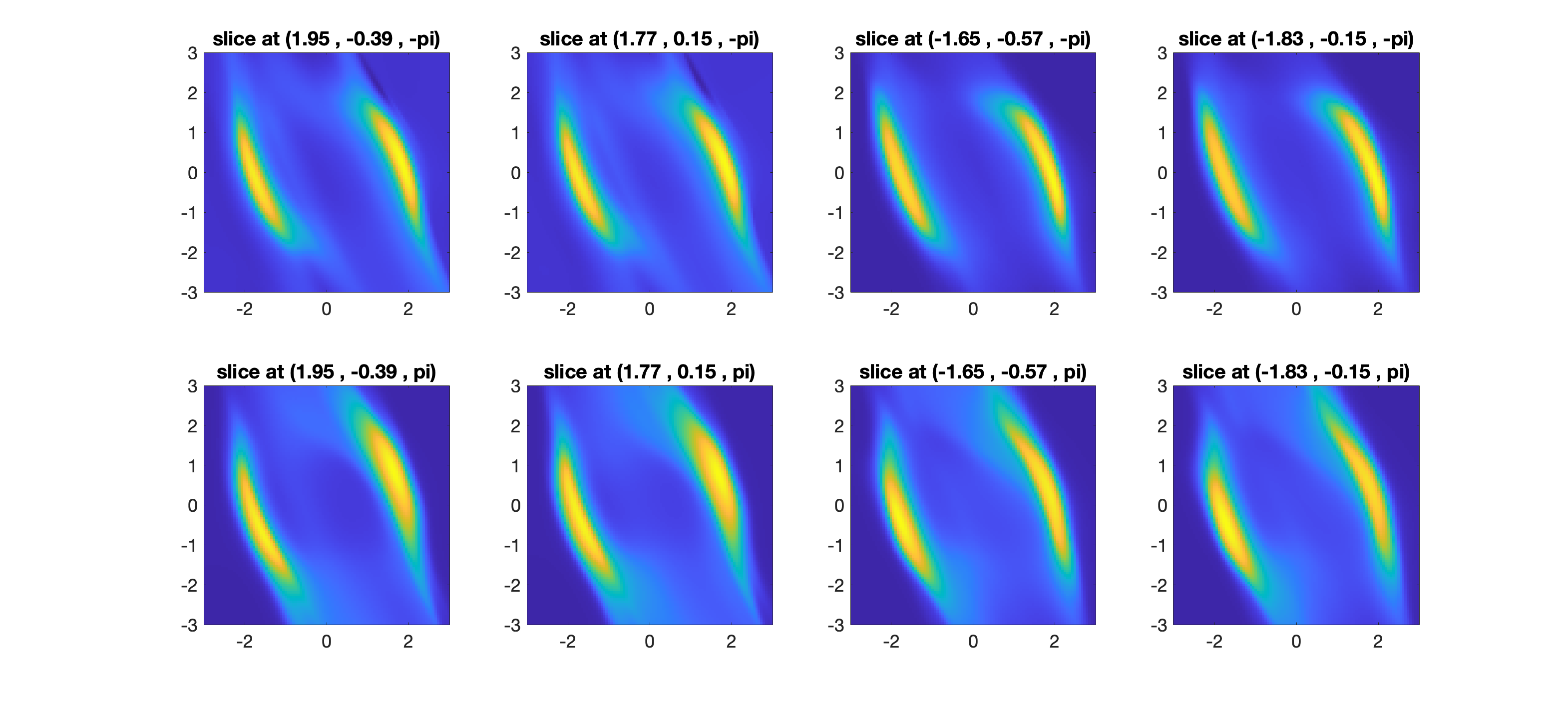}
    \caption{Heat maps of neural network solution of the decoupled Van der Pol oscillator at 8 $(x,y)$-slices. The first line: $(x,y)$-slices at $t = -\pi$ for $(z,u) = (1.95, -0.39)$, $(1.77, 0.15)$, $(-1.65, -0.57)$ and $(-1.83, -0.15)$. The second line: $(x,y)$-slices at $t = \pi$ for $(z,u) = (1.95, -0.39)$, $(1.77, 0.15)$, $(-1.65, -0.57)$ and $(-1.83, -0.15)$.}
    \label{VDP4d}
\end{figure}

Then, we compute the periodic invariant probability measure of the coupled Van der Pol equation at the same 5D domain with coupling strength $\gamma = 0.03$. 

We choose the same four $(x,y,t)$ 3D-cubes as in decoupled Van der Pol oscillators, for $(z,u) = (1.95, -0.39)$, $(1.77, 0.15)$, $(-1.65, -0.57)$ and $(-1.83, -0.15)$ respectively. For each 3D-cube, we present one $(x,y)$-slice at $t = 0$. The heat maps of 4 $(x,y)$-slices of the coupled Van der Pol oscillators with $\gamma = 0.03$ are shown in Figure \ref{VDP4c}. Since there is no ground truth, we use large scale Monte Carlo simulation to estimate the probability density function at one 2D-slice, $(x,y,1.95,-0.39,0)$, as a reference. We can see that the probability density function at 2D slices becomes asymmetric due to coupling. As seen in Figure \ref{VDP4c}, the neural network solution is consistent with the Monte Carlo simulation result. In addition, we demonstrate the periodic invariant probability measure at the 2D slice $(x, 0.15, z, 0.20, 0)$ in the bottom middle panel of Figure \ref{VDP4c}. We can see that although the coupling strength is weak, $x$-variable and $z$-variable still become highly correlated. 

\begin{figure}
    \centering
    \includegraphics[width=\linewidth]{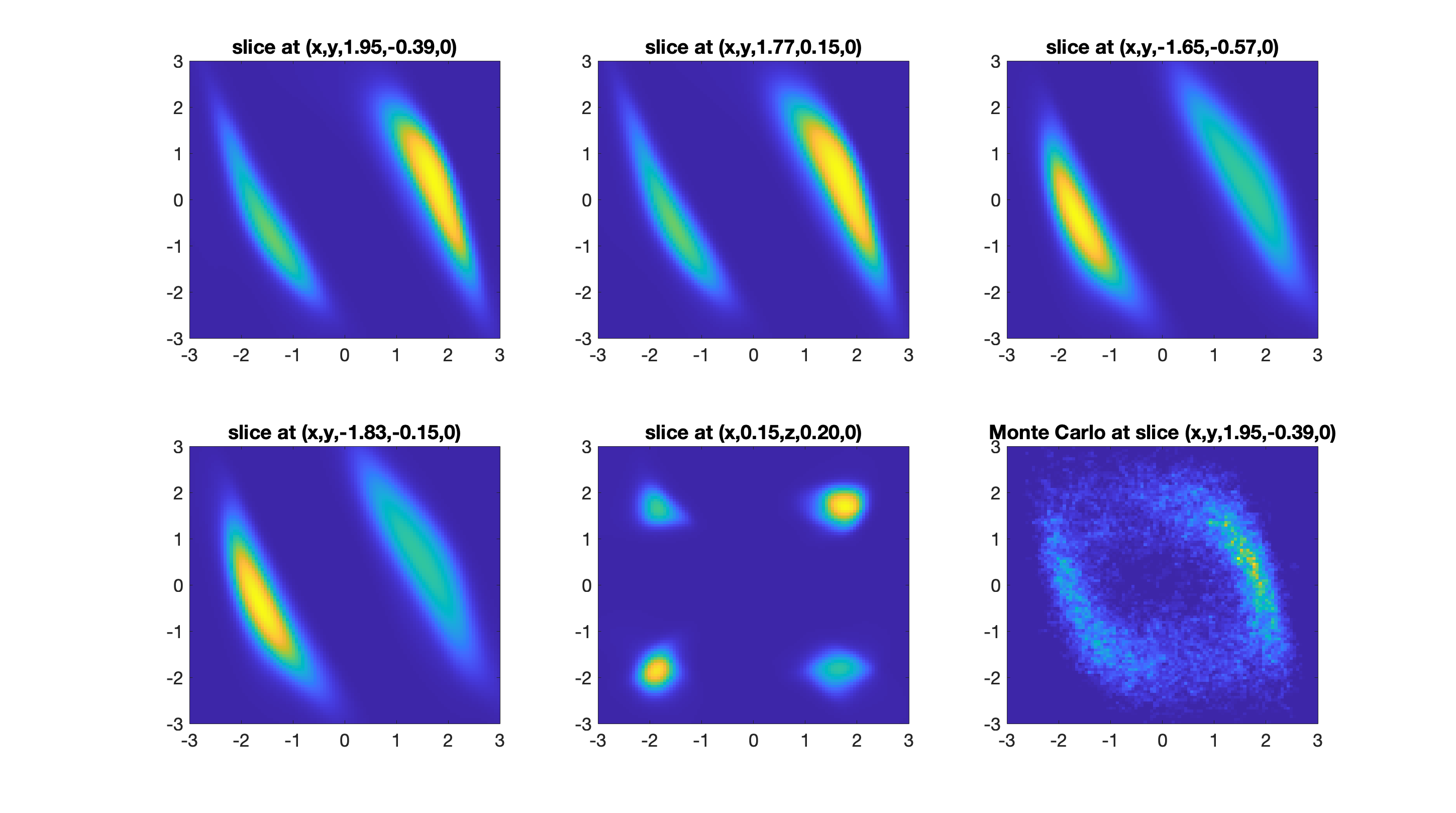}
    \caption{Heat maps for neural network training result and Monte Carlo simulation. The top panel and the left bottom panel: heat maps for neural network solution at 4 $(x,y)$-slices at time $t=0$ with $(z,u) = (1.95, -0.39)$, $(1.77, 0.15)$, $(-1.65, -0.57)$ and $(-1.83, -0.15)$. The middle bottom panel: heat maps of periodic invariant probability measure at 2D slice $(x, 0.15, z, 0.20, 0)$. The right bottom panel: Monte Carlo simulation of probability density function at 2D-slice $(x,y,1.95,-0.39,0)$.}
    \label{VDP4c}
\end{figure}

\begin{figure}[htbp]
    \centering
    \includegraphics[width=1.0\textwidth]{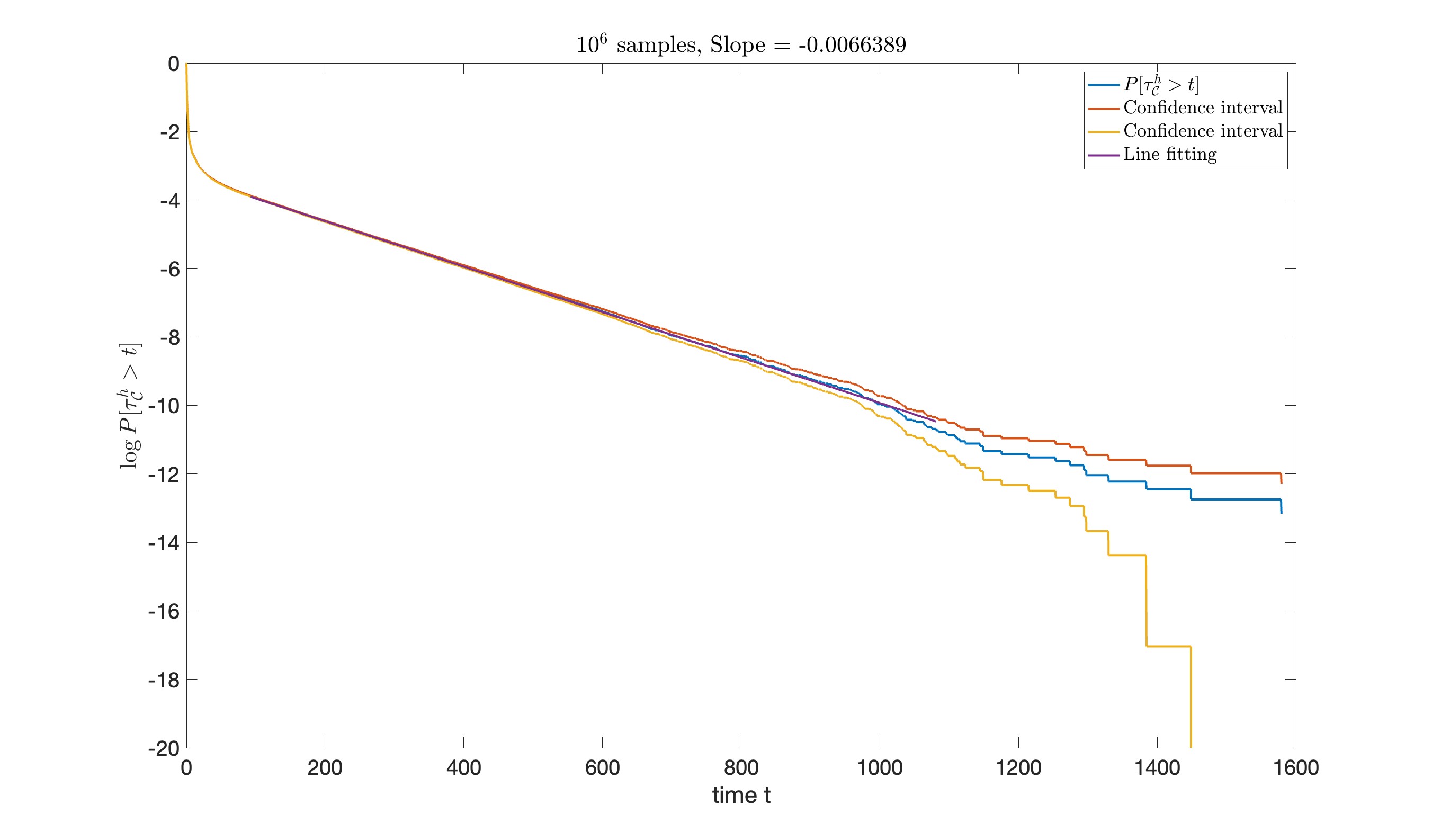}
    \caption{Approximation of $\log P(\tau_c > t)$ and the related confidence interval.}
    \label{fig: Example 6.3 convergence rate}
\end{figure}

Then, we use the coupling method to estimate the speed of convergence to the periodic invariant probability measure of coupled Van der Pol oscillators with coupling rate $\gamma=0.03$. In Figure \ref{fig: Example 6.3 convergence rate}, the convergence rate to the periodic invariant probability measure is obtained by using the coupling method in Section 5.2 with $N=10^6$ samples and time step size $h=0.001$.The initial position for two points are $(2.0, 1.0, 1.0, -2.0)$ and $(2.5, 1.0, 1.0, 2.0)$. We use the reflection couplings first, and when two trajectories are sufficiently close to each other, we use the maximal couplings. We choose $k=8$ in the algorithm of estimating $C(t)$, and the convergence rate $r = 0.0066389$, which is a very slow convergence rate. It is reasonable because two trajectories are more difficult to be coupled successfully in 4-dimensional space.

\section{Conclusion and future works}
We proposed a data-driven neural network method for solving the time-periodic Fokker-Planck equations that describe the time-periodic invariant probability measures. The time-periodic case is not a trivial extension of the case of the stationary (time-independent) Fokker-Planck equation studied in \cite{dobson2019efficient} and \cite{zhai2022deep}. In fact, we need to consider the specific periodic boundary condition when constructing the discretization of operator $\mathcal{L}$ in the optimization problem in Chapter 3, as well as the new periodic penalty term in the loss function of the neural network solver in Chapter 4. In addition, in the analysis of the convergence speed of the periodic process $X_t$, we introduced a Floquet-type decomposition of the distribution of coupling time, which is different from the estimation approach used for time-homogeneous stochastic differential equations. These features make the time-periodic case significantly different from the time-independent case, and introduce new complexity in both analysis and numerical computation.

In this paper, we studied three numerical examples in high dimensions. Despite the success of estimation of the periodic probability density in 3D, there are some challenges in higher-dimensional systems, such as in dimension five or higher. First, effective sampling of the state space to generate collocation points for neural network training remains difficult. Even though the mesh-free sampler helps avoid the issues associated with grid-based discretization of operator $\mathcal{L}$ in Monte Carlo simulation, sampling in regions with low probability density is still challenging. This is because the Fokker-Planck equation is about the {\it probability density}, which is a very local property. Sampling probability densities in high dimensions is intrinsically challenging. Second, importance sampling becomes critical because the solution can be highly concentrated in the domain. For the same reason, the relative volume of those regions is exponentially smaller as the dimension increases. As a result, in the region where the solution changes the most rapidly, there are only a small fraction of points carrying significant information. In this situation, the data-driven neural network solver may produce a trivial solution, since $u = 0$ also satisfies the Fokker-Planck equation. 
 
Based on the common challenges for real high dimension cases, it is important to improve sampling methods to obtain more accurate numerical estimation of the invariant periodic probability solution. In future work, on one hand, we need to further advance the collocation sampling method proposed in \cite{wang2024deep} to "highlight" areas where the solution has the most rapid changes when constructing the training set. On the other hand, we must develop a true high-dimensional sampler that does not rely on spatial discretizations at all. This will require an adaptation of the kernel density estimator (KDE) for sampling the invariant probability density of stochastic differential equations, as well as the use of the "forward-reverse sampler" proposed in \cite{milstein2004transition} to correct the error introduced by KDE. We will address these two points in our subsequent work. 

\bibliographystyle{plain}
\bibliography{reference} 

\end{document}